\newtheorem{thm}{Theorem}[section]
\newtheorem{Lemma}[thm]{Lemma}
\newtheorem{Proposition}[thm]{Proposition}
\newtheorem{Corollary}[thm]{Corollary}
\newtheorem*{thm*}{Theorem}
\theoremstyle{definition}
\newtheorem{Definition}[thm]{Definition}
\newtheorem{Remark}[thm]{Remark}
\newtheorem{say}[thm]{}
\definecolor{wwwwww}{rgb}{0.4,0.4,0.4}
\newcommand{\Spec}{\operatorname{Spec}}
\newcommand{\G}{\mathbb{G}}
\DeclareMathOperator{\Res}{Res}
\DeclareMathOperator{\ch}{char}
\DeclareMathOperator{\mult}{mult}
\DeclareMathOperator{\Br}{Br}
\DeclareMathOperator{\Sing}{Sing}
\DeclareMathOperator{\Sec}{Sec}
\DeclareMathOperator{\secc}{\mathbb{S}ec}
\DeclareMathOperator{\rank}{rank}
\DeclareFontFamily{U}{cbgreek}{}
\DeclareFontShape{U}{cbgreek}{m}{n}{
        <-6>    grmn0500
        <6-7>   grmn0600
        <7-8>   grmn0700
        <8-9>   grmn0800
        <9-10>  grmn0900
        <10-12> grmn1000
        <12-17> grmn1200
        <17->   grmn1728
      }{}
\DeclareFontShape{U}{cbgreek}{bx}{n}{
        <-6>    grxn0500
        <6-7>   grxn0600
        <7-8>   grxn0700
        <8-9>   grxn0800
        <9-10>  grxn0900
        <10-12> grxn1000
        <12-17> grxn1200
        <17->   grxn1728
      }{}
\DeclareRobustCommand{\qoppa}{%
  \text{\usefont{U}{cbgreek}{\normalorbold}{n}\symbol{19}}%
}
\DeclareRobustCommand{\Qoppa}{%
  \text{\usefont{U}{cbgreek}{\normalorbold}{n}\symbol{21}}%
}
\newcommand{\normalorbold}{%
  \ifnum\pdf@strcmp{\math@version}{bold}=\z@ bx\else m\fi
}
\begin{document}

\title{Rational points on even dimensional Fermat cubics}

\author[Alex Massarenti]{Alex Massarenti}
\address{\sc Alex Massarenti\\ Dipartimento di Matematica e Informatica, Universit\`a di Ferrara, Via Machiavelli 30, 44121 Ferrara, Italy}
\email{msslxa@unife.it}

\date{\today}
\subjclass[2020]{Primary 12F20, 12E10, 14E08, 14M20, 12F10; Secondary 14G05.}
\keywords{Transcendental field extensions, rationality, rational points, hypersurfaces.}

\maketitle

\begin{abstract}
We show that even dimensional Fermat cubic hypersurfaces are rational over any field of characteristic different from three by producing explicit rational parametrizations given by polynomials of low degree. As a byproduct of our rationality constructions we get estimates on the number of their rational points over a number field, and a class of quadro-cubic Cremona correspondences of even dimensional projective spaces. 
\end{abstract}

\tableofcontents

\section{Introduction}
The rationality of smooth hypersurfaces is one of the oldest and most challenging problems in algebraic geometry. Recall that an $n$-dimensional variety $X$ over a field $k$ is rational if it is birational to $\mathbb{P}^n_{k}$, $X$ is unirational if there is a dominant rational map $\mathbb{P}^n_{k}\dasharrow X$, and $X$ is stably rational if $X\times \mathbb{P}^m$ is rational for some $m\geq 0$. Hence, a rational variety is stably rational, and a stably rational variety is unirational. In purely algebraic terms, $X$ is rational over $k$ if the function field $k(X)$ of $X$ is isomorphic to the field of rational functions $k(x_1,\dots,x_n)$, and $X$ is unirational if there is a finite extension of $k(X)$ which is a purely transcendental field extension of $k$.

The first examples of stably rational non-rational varieties had been given in \cite{BCSS85}, where the authors, using Ch\^atelet surfaces, constructed a complex non-rational conic bundle $T$ such that $T\times \mathbb{P}^3$ is rational.

Several results, mostly concerning the non rationality of Fano hypersurfaces, appeared in the last decades and also in recent years \cite{Ko95}, \cite{Vo15}, \cite{CP16}, \cite{To16}, \cite{HKT16}, \cite{AO18}, \cite{BG18}, \cite{HPT18}, \cite{Sc19a}, \cite{Sc19b}, \cite{HPT19}. In \cite[Theorem 1.17]{CP16} J. L. Colliot-Th\'{e}l\`ene and A. Pirutka proved that a very general smooth complex quartic $3$-fold is not stably rational. In \cite[Corollary 1.4]{Sc19b} S. Schreieder gave the first examples of unirational non stably rational smooth hypersurfaces. 

Concerning the rationality of quadric hypersurface there is not much to say: a quadric $X^N\subset\mathbb{P}^{N+1}$ is rational if and only if it has a smooth rational point $p\in X^N$, and this can be seen by projection from $p$. Hence, the first interesting case, which turns out to be very difficult, is that of cubics. Thank to the work of B. Segre over the rationals \cite{Seg43} and of J . Koll\'ar \cite{Kol02} over an arbitrary field we know that a smooth cubic hypersurface is unirational if and only if it has a rational point. Unfortunately, our understanding of the rationality of cubics is much more clouded. Smooth cubic curves are non rational while cubic surfaces over an algebraically close field are rational being blow-ups of the projective plane, and smooth cubic $3$-folds are non rational as proved by C. Clemens and P. Griffiths \cite{CG72}. 

Now, let us briefly discuss the $4$-dimensional case. B. Hassett introduced subvarieties $\mathcal{C}_d$ inside the moduli space of smooth cubic $4$-folds $\mathcal{C}$ as loci parametrizing isomorphism classes of cubic $4$-folds $X\subset\mathbb{P}^5$ such that $H^{2,2}(X,\mathbb{Z})$ contains a sublattice $K\subset H^{2,2}(X,\mathbb{Z})$ whose discriminant is equal to $d\in\mathbb{Z}$, and proved that these loci are either empty or divisors in $\mathcal{C}$ \cite{Has99}, \cite{Has00}. We refer to Section \ref{4F} for all the needed details on these objects. Kuznetsov's conjecture predicts that a smooth cubic $4$-fold $X\subset\mathbb{P}^5$ is rational if and only if its class $[X]\in\mathcal{C}$ belongs to a divisor $\mathcal{C}_d$ such that $d> 6$ is not divisible by $4,9$ or any odd prime number congruent to $2$ modulo $3$. To be precise B. Hassett asked whether for a cubic $4$-fold to lie in a divisor $\mathcal{C}_d$ with $d$ admissible was equivalent to rationality. Later on, A. Kuznetsov conjectured that a cubic $4$-fold is rational if and only if it has an associated $K3$ surface in a suitable derived categorical sense \cite{Kuz10}. The equivalence of Hassett's and Kuznetsov's conditions has been proved by N. Addington and R. Thomas \cite[Theorem 1.1]{AT14}.

Certain divisors $\mathcal{C}_d$ admits a neat geometric description. For instance $\mathcal{C}_8$ parametrizes cubic $4$-folds containing a plane, $\mathcal{C}_{12}$ cubic $4$-folds containing a cubic scroll, $\mathcal{C}_{14}$ cubic $4$-folds containing a quintic del Pezzo surface or equivalently a quartic scroll, $\mathcal{C}_{20}$ cubic $4$-folds containing a Veronese surface. The rationality of a general cubic $4$-fold in $\mathcal{C}_{14}$ has been proved by U. Morin in \cite{Mor40} and by G. Fano in \cite{Fan43}. This fact has then been extended to any smooth cubic $4$-fold in $\mathcal{C}_{14}$ by M. Bolognesi, F. Russo and G. Staglian\`o \cite{BRS19}, and by M. Kontsevich and Y. Tschinkel who proved that rationality specializes in smooth families \cite{KT19}. Furthermore, thank to the work of F. Russo and G. Staglian\`o we know that any cubic $4$-fold in $\mathcal{C}_{26}$ and $\mathcal{C}_{38}$ is rational \cite{RS19}, and that this holds also for any cubic $4$-fold in $\mathcal{C}_{42}$ \cite{RS23}. We will be particularly interested in the divisors $\mathcal{C}_{8}$ and $\mathcal{C}_{14}$ since the Fermat cubic $4$-fold lies in their intersection.

Despite this great amount of efforts the rationality problem for cubic hypersurfaces is still widely open and many natural questions remain unanswered. For instance, the general cubic $4$-fold is expected to be non rational but we do not have a proof yet, and not a single example of a smooth odd dimensional rational cubic hypersurface is known. On the other hand, in all even dimensions there are smooth and rational cubics. The simplest example is that of cubics $X^{2n}\subset\mathbb{P}^{2n+1}$ containing two skew $n$-plane defined over the base field. By taking two general points, one on each $n$-plane, and associating to them the third intersection point of $X^{2n}$ with the line they generate one gets a rationality construction for $X^{2n}$. Even dimensional Fermat cubics contain many $n$-planes but any two of those defined over the base field intersect. However, $X^{2n}$ contains several pairs of skew $n$-planes which are defined over a quadratic extension of the base field and Galois conjugate. This observation will be the key of our rationality constructions. 

Recall that the Brauer group $\Br(X)$ of a projective variety $X$ is the torsion subgroup of the \'etale cohomology group $H^2(X,G_m)$, and the Brauer group $\Br(k)$ of a field $k$ is the abelian group of similarity classes of finite central simple $k$-algebras, where two finite central simple $k$-algebras $A,B$ are similar if the $k$-algebras of $a\times a$ matrices with entries in $A$ and of $b\times b$ matrices with entries in $B$ are isomorphic for some $a,b > 0$. For smooth projective varieties over a field, the Brauer group is a birational invariant, and the Brauer group of a projective space over a field $k$ is isomorphic to $\Br(k)$. Our main results in Sections \ref{S4F} and \ref{res_scal} can be summarized as follows:

\begin{thm}\label{main1}
Let $k$ be a field of characteristic $\ch(k)\neq 3$. For any $n\geq 1$ the Fermat cubic hypersurface
$$
X^{2n} = \{x_0^3+\dots + x_{2n+1}^3 = 0\}\subset\mathbb{P}^{2n+1}
$$
is rational over $k$, that is the function field $k(X^{2n})$ is isomorphic to the function field $k(x_1,\dots,x_{2n})$. In particular, $\Br(X^{2n})$ is isomorphic to $\Br(k)$.

More precisely, for all $n\geq 2 $ there exists a birational parametrization $\mathbb{P}^{2n}\dasharrow X^{2n}$ given by homogeneous polynomials of degree four, and for $n = 1$ there is a birational parametrization $\mathbb{P}^{2}\dasharrow X^{2}$ given by homogeneous polynomials of degree three. 
\end{thm}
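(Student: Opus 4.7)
The plan is to carry out the two-skew-planes rationality construction mentioned in the introduction, but using a pair of $n$-planes on $X^{2n}$ defined over the quadratic extension $L = k[\omega]$ (with $\omega$ a primitive cube root of unity) and swapped by $\Gal(L/k)$, then descend the resulting map to $k$ by Galois theory.

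First I would exhibit the two $n$-planes. The factorization $x^3 + y^3 = \prod_{j=0}^{2}(x + \omega^j y)$ immediately shows that
$$\Pi = \{x_{2i} + \omega\, x_{2i+1} = 0 : 0 \leq i \leq n\}, \qquad \Pi' = \{x_{2i} + \omega^2 x_{2i+1} = 0 : 0 \leq i \leq n\}$$
are $n$-planes contained in $X^{2n}$, defined over $L$. Their $2(n+1)$ linear equations split as a direct sum of $n+1$ nondegenerate $2 \times 2$ blocks, so $\Pi \cap \Pi' = \emptyset$; when $\omega \notin k$, the nontrivial element of $\Gal(L/k)$ exchanges them. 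Over $L$ the classical construction then gives a birational map $\phi : \Pi \times \Pi' \dashrightarrow X^{2n}$ sending $(p, q)$ to the third intersection point of $\overline{pq}$ with $X^{2n}$. Writing $F = \sum_i x_i^3$ and using $F(p) = F(q) = 0$, a direct expansion yields
$$F(\lambda p + \mu q) = 3 \lambda \mu \Bigl(\lambda \textstyle\sum_i p_i^2 q_i + \mu \sum_i p_i q_i^2\Bigr),$$
so that
$$\phi(p,q) = -\Bigl(\textstyle\sum_i p_i q_i^2\Bigr) p + \Bigl(\sum_i p_i^2 q_i\Bigr) q,$$
visibly of bidegree $(2,2)$ in $(p,q)$, and birational because to a general $r \in X^{2n}$ one associates the unique pair $(p,q) = (\langle \Pi', r\rangle \cap \Pi,\, \langle \Pi, r\rangle \cap \Pi')$.

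To descend the map to $k$ when $\omega \notin k$, parametrize a generic point of $\Pi$ as $p = u + \omega v$ with $(u,v)$ ranging over a $k$-rational parameter space of dimension $2n$; then $\bar p = u + \omega^2 v$ lies in $\Pi'$, the line $\overline{p\bar p}$ is Galois-stable and hence $k$-defined, and so is $\phi(p, \bar p)$. Substituting $p = u + \omega v$, $q = u + \omega^2 v$ into the bidegree-$(2,2)$ formula yields a vector whose entries lie in the Galois-antiinvariant subspace $(\omega - \omega^2)\,k \subset L$; factoring the common $(\omega - \omega^2)$ (permissible in projective coordinates) leaves a $k$-rational vector of polynomials of total degree $4$ in $(u,v)$. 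Since $\Res_{L/k}(\Pi)$ contains the Weil restriction of an affine chart $\mathbb{A}^n_L$, which is $\mathbb{A}^{2n}_k$, it is $k$-birational to $\P^{2n}_k$, giving the claimed quartic birational map $\P^{2n}_k \dashrightarrow X^{2n}$. If $\omega \in k$ no descent is needed, and $\Pi \times \Pi' \cong \P^n_k \times \P^n_k$ is directly $k$-birational to $\P^{2n}_k$. The isomorphism $\Br(X^{2n}) \cong \Br(k)$ is then immediate from the birational invariance of $\Br$ for smooth projective varieties together with $\Br(\P^{2n}_k) \cong \Br(k)$.

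The case $n = 1$ requires a separate, classical argument, since the construction above produces only a quartic parametrization while the theorem asks for a cubic one. The Fermat cubic surface has $27$ lines over $\bar k$, and from them one extracts a $\Gal(\bar k / k)$-stable set of six pairwise skew lines defined over $k$; blowing these down realizes $X^2$ as the blowup of $\P^2_k$ at six points in general position, and the inverse contraction is given by the linear system of plane cubics through those six points. I expect the main technical obstacles to be: (i) tracking the degree precisely in the descent for $n \geq 2$, especially verifying that clearing the $(\omega - \omega^2)$-scalar produces polynomials of degree exactly $4$ and not higher after reducing common factors; and (ii) for $n = 1$, producing a $\Gal(\bar k/k)$-stable six-skew-line configuration, which amounts to a finite case analysis of the Galois action on the $27$ lines, twisted by the arithmetic of the extension $k(\omega)/k$.
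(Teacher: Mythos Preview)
Your approach is essentially the paper's: the bidegree-$(2,2)$ third-intersection map from $\Pi\times\Pi'$ descended via Weil restriction is exactly the construction of Section~\ref{res_scal}, and your $n=1$ argument via a Galois-stable six of skew lines is Proposition~\ref{BiRP2bis}. One point worth flagging: the paper's explicit formulas use $\xi=\sqrt{-3}$ and $a_\pm=(1\pm\xi)/2$, which forces a separate treatment of characteristic two (Section~\ref{ch2}), where the naive reduction of those quartics collapses onto a linear subspace; your formulation in terms of $\omega$ avoids the division by $2$ and the descent argument via Galois-equivariance of $\Pi\times\Pi'\dashrightarrow X^{2n}$ goes through uniformly, so you may in fact sidestep that extra section---but you should verify explicitly that your degree-$4$ polynomials remain independent (i.e.\ the map does not degenerate) when $\ch(k)=2$, since the paper found this the most delicate point.
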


Note that when $\ch(k)= 3$ we have that $X^{2n} = \{(x_0+\dots + x_{2n+1})^3 = 0\}\subset\mathbb{P}^{2n+1}$. The rationality of cubic surfaces $X^{2n}\subset\mathbb{P}^{2n+1}$ containing a pair of skew and conjugate $n$-planes was already known \cite[Remark 2.4.1(b)]{CSS87}. The main novelty of our approach consists in the construction of explicit rational parametrizations given by polynomials of low degree. A parameterization of $X^2$ with quartic polynomials was given in \cite[Chapter XIII, Section 13.7]{HW08}.

We will give two rationality constructions for $X^{2n}$ and we will make both of them very explicit in terms of linear systems. The one in Section \ref{gA} relies on Grassmannians of line and is more geometric in nature while that in Section \ref{res_scal} is algebraic and uses restriction of scalars. We will describe this second construction in detail, and we will see that the corresponding birational maps $\mathbb{P}^{2n}\dasharrow X^{2n}$ and $X^{2n}\dasharrow \mathbb{P}^{2n}$ are given by linear systems of quartics and quadrics respectively, although an extra care on the base locus of the linear system of quartics needs to be taken in characteristic two, we will deal with this in Section \ref{ch2}. Furthermore, it will be clear that both these rationality construction work more generally for even dimensional cubic hypersurfaces containing a pair of skew half dimensional Galois conjugate linear spaces.

Moreover, investigating the relation between these two rationality constructions in Section \ref{bri} we describe a new class of quadro-cubic Cremona transformations.

\begin{thm}\label{Cre}
Let $k$ be a field of characteristic zero. Consider the following subschemes
$$
\begin{array}{llll}
T_1 & = & \{t_0^2+t_0t_1+t_1^2 = t_2 = 0\}; & \\ 
T_2 & = & \{t_0 = t_1 = 0\}; & \\ 
T_3 & = & \{t_0 = t_{2i+1} = 0\} & \text{for } i = 0,\dots,n-1;\\ 
T_4 & = & \{t_0 = t_{2i}t_{2j+1}-t_{2i-1}t_{2j+2} = 0\} & \text{for } i = 1,\dots,n-1;\: i\leq j\leq n-1;
\end{array} 
$$
in $\mathbb{P}^{2n}_{(t_0,\dots,t_{2n})}$, and the following subschemes
$$
\begin{array}{llll}
U_1 & = & \{u_1 = u_2 = 0\};
\end{array} 
$$
and
$$
U_2 = \left\lbrace
\begin{array}{llll}
u_0 & = & 0; &  \\ 
u_{2i+1}^2 + 3u_{2i+2}^2 & = & 0 & \text{for } i = 0,\dots,n;\\ 
u_{2i+1}u_{2j+1} + 3u_{2i+2}u_{2j+2} & = & 0 & \text{for } i = 0,\dots,n-2;\: i < j \leq n-1;\\ 
u_{2i}u_{2j+1}-u_{2i-1}u_{2j+1} & = & 0 & \text{for } i = 1,\dots,n-1;\: i \leq j \leq n-1;
\end{array}\right.
$$
in $\mathbb{P}^{2n}_{(u_0,\dots,u_{2n})}$. If $n\geq 2$ then the linear system of quadric hypersurfaces containing $U_1,U_2$ and the linear system of cubic hypersurfaces of $\mathbb{P}^{2n}_{(t_0,\dots,t_{2n})}$ containing $T_1,T_2,T_4$ and vanishing with multiplicity two on $T_3$ yield a quadro-cubic Cremona correspondence between $\mathbb{P}^{2n}_{(u_0,\dots,u_{2n})}$ and $\mathbb{P}^{2n}_{(t_0,\dots,t_{2n})}$.
\end{thm}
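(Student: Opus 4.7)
The plan is to obtain the Cremona correspondence as the composition of the two rationality constructions for $X^{2n}$ announced in the introduction, namely the geometric one based on Grassmannians of lines (Section \ref{gA}) and the algebraic one based on restriction of scalars (Section \ref{res_scal}). Concretely, let $\varphi_{\mathrm{gA}}\colon \mathbb{P}^{2n}_{(u_0,\dots,u_{2n})}\dasharrow X^{2n}$ and $\varphi_{\mathrm{rs}}\colon \mathbb{P}^{2n}_{(t_0,\dots,t_{2n})}\dasharrow X^{2n}$ denote the two birational parametrizations. Then the Cremona map will be
$$
\Phi = \varphi_{\mathrm{rs}}^{-1}\circ\varphi_{\mathrm{gA}}\colon\mathbb{P}^{2n}_{(u_0,\dots,u_{2n})}\dasharrow\mathbb{P}^{2n}_{(t_0,\dots,t_{2n})},
$$
and its inverse will be $\Phi^{-1} = \varphi_{\mathrm{gA}}^{-1}\circ\varphi_{\mathrm{rs}}$. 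So the work reduces to computing the linear systems defining the two compositions and reading off their base loci.

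The first step is to make both rationality constructions completely explicit in coordinates. For $\varphi_{\mathrm{gA}}$, one writes down the pair of skew Galois conjugate $n$-planes $\Pi,\overline{\Pi}\subset X^{2n}$ picked out by the cube root of unity $\omega$ (so that the equations $u_{2i+1}^2+3u_{2i+2}^2=0$ and $u_{2i+1}u_{2j+1}+3u_{2i+2}u_{2j+2}=0$ appearing in $U_2$ can already be recognized as the conditions cutting out the real locus of $\Pi\cap\overline{\Pi}$), and one takes the degree-four parametrization given by sending a point in $\mathbb{P}^{2n}$ to the third intersection of $X^{2n}$ with a well-chosen line joining $\Pi$ and $\overline{\Pi}$. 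For $\varphi_{\mathrm{rs}}$, one unfolds the restriction-of-scalars birational map, whose inverse is described by an explicit linear system of quadrics in the $t$-variables.

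The second step is to compute the degree of the composition. Since $\varphi_{\mathrm{rs}}^{-1}\colon X^{2n}\dasharrow\mathbb{P}^{2n}$ is linear in the hyperplane class (given by quadrics in $\mathbb{P}^{2n+1}$ via the construction in Section \ref{res_scal}), pulling back a hyperplane of $\mathbb{P}^{2n}_{(t)}$ to $\mathbb{P}^{2n}_{(u)}$ via $\Phi$ gives a quartic intersected with a section of $X^{2n}$, which after removing the fixed component coming from the base locus of $\varphi_{\mathrm{gA}}$ drops to a quadric; this forces $\Phi$ to be defined by quadrics. Reversing the roles, $\Phi^{-1}$ is defined by cubics after one divides out the $X^{2n}$-factor from the quartic system of $\varphi_{\mathrm{rs}}^{-1}\circ\varphi_{\mathrm{gA}}$-pullbacks.

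The last and main step is the identification of the base loci. For $\Phi$ one checks directly that the scheme cut out by the two-dimensional linear system of quadrics associated to the given parametrization is exactly $U_1\cup U_2$: the component $U_1$ comes from the pencil lifting the pair of skew $n$-planes, while $U_2$ is the residual scheme obtained by intersecting the defining quadrics (the quadratic equations appearing in $U_2$ are essentially the minors and conjugation relations visible in the explicit formulas from Section \ref{gA}). For $\Phi^{-1}$ one has to verify that the cubic system vanishes on $T_1$, $T_2$, $T_4$ and vanishes with multiplicity $2$ along $T_3$: here $T_1$ appears because of the conic $t_0^2+t_0t_1+t_1^2=0$ encoding the cube roots of unity, $T_2$ is the common intersection coming from the skew pair of conjugate $n$-planes, $T_4$ comes from the minors in the restriction-of-scalars description, and the double structure on $T_3$ reflects the fact that the Grassmannian construction contracts a divisor to this locus with ramification two. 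Once these incidences are verified, dimension counting on both linear systems shows that they are exactly the ones described, and the fact that the compositions $\Phi\circ\Phi^{-1}$ and $\Phi^{-1}\circ\Phi$ are the identity is automatic from the birationality of $\varphi_{\mathrm{gA}}$ and $\varphi_{\mathrm{rs}}$ proved earlier.

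The hard part will be the base-locus computation, and in particular identifying the right scheme structure along $T_3$: verifying that the cubic system vanishes to order exactly two there, rather than being merely set-theoretically supported on it, requires a careful local analysis of the Grassmannian parametrization near those loci, which is where the bulk of the work will lie.
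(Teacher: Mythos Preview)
Your high-level idea --- obtain the Cremona map as the bridge between the two birational parametrizations of $X^{2n}$ --- is exactly the one the paper uses. But there are two points where your proposal diverges from the paper, one a genuine mix-up and one a difference of strategy.

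First, you have swapped the roles of the coordinate systems. In the paper, the Grassmannian construction of Section~\ref{gA} lives on $\mathbb{P}^{2n}_{(t_0,\dots,t_{2n})}$ (the map $\overline{\phi}_{2n}$, of degree~$8$ for $n\ge 2$), while the restriction-of-scalars construction of Section~\ref{res_scal} lives on $\mathbb{P}^{2n}_{(u_0,\dots,u_{2n})}$ (the map $\overline{\varphi}_{2n}$, of degree~$4$, with inverse $\qoppa_{2n}$ given by quadrics). Your assignment is the opposite, and this propagates into your degree analysis: the numbers you sketch (``a quartic intersected with a section of $X^{2n}\dots$ drops to a quadric'') do not match either convention. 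With the correct labels, composing $\qoppa_{2n}\circ\overline{\phi}_{2n}$ would substitute degree-$8$ polynomials into quadrics, giving degree~$16$ before cancellation; the reduction to degree~$3$ is not the simple ``remove a fixed component'' step you describe.

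Second, and more substantively, the paper avoids composing through $X^{2n}$ altogether. Both constructions ultimately parametrize pairs of conjugate points $(p^{a_+},p^{a_-})$ on the two skew $n$-planes $H_+, H_-$: the $t$-coordinates do this via the Grassmannian chart (equations (\ref{eq+}), (\ref{eq-})), the $u$-coordinates via the Weil restriction. The paper's Theorem~\ref{rel_cr} simply sets $x^{a_+}(u)=p^{a_+}(t)$ and $x^{a_-}(u)=p^{a_-}(t)$, obtaining a system that is \emph{linear} in the $u_i$ with coefficients rational in the $t_j$; solving it directly produces the cubics $\alpha_i$ of (\ref{alp}) with no factoring of high-degree polynomials required. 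The quadrics $\beta_i$ of (\ref{bet}) are then written down explicitly and checked to invert the $\alpha_i$. The identification of the linear systems with $|\mathcal{I}_{T_1,T_2,2T_3,T_4}(3)|$ and $|\mathcal{I}_{U_1,U_2}(2)|$ (your ``last and main step'') is done in Proposition~\ref{crem} by the same kind of direct partial-derivative computation as in Proposition~\ref{linsys4}, starting from the explicit formulas rather than from a conceptual description of the base locus.

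So your strategy would work in principle, but the paper's shortcut of comparing the two parametrizations of $H_+\times H_-$ rather than composing through $X^{2n}$ is what makes the computation tractable and yields the explicit Cremona formulas immediately.
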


Next, we focus on the $4$-dimensional case. Let $X^4\subset\mathbb{P}^5$ be a general cubic $4$-fold containing a del Pezzo surface $S\subset\mathbb{P}^5$ of degree five. Fano's rationality construction for $X^4$ amounts to take the restriction to $X^4$ of the map induced by the linear system of quadrics containing $S$. The following result is aimed to show that Fano's construction specializes to our rationality construction for the Fermat cubic $4$-fold and that such specialization can be carried out entirely over the base field. 
\begin{thm}\label{th_spec}
Consider the following family of cubic $4$-folds
$$
X_t^4 = \{G_t = x_0^3 + x_1^3 + x_2^3 + x_3^3 + x_4^3 + x_5^3 + tA + t^2B =  0\}\subset \mathbb{P}^{5}_{k}
$$
parametrized by $t\in\mathbb{A}_k^1$, where $k$ has characteristic zero and
$$
A = x_1x_4^2 - x_0x_4x_5 + 3x_2x_4x_5 + x_2x_5^2 - 2x_3x_5^2, \:  B = x_0^2x_5 - x_0x_2x_5 + x_1x_2x_5 + x_2^2x_5 + x_0x_3x_5 - x_2x_3x_5.
$$
Then there exists a family $\varphi_t:\mathbb{P}^4\dasharrow X_t^4$ of rational maps such that 
\begin{itemize}
\item[(i)] $\varphi_{t}:\mathbb{P}^4\dasharrow X_{t}^4$ is a birational parametrization over $k$ of $X_{t}^4$ for $t\in k$ general;
\item[(ii)] $\varphi_{0}:\mathbb{P}^4\dasharrow X_{0}^4$ is a birational parametrization of the Fermat cubic $4$-fold.
\end{itemize}
Furthermore, let $S'_t$ be the family of surfaces given by the base loci of the $\varphi_t$, and $S_t$ the family of surfaces given by the base loci of inverses of the $\varphi_t$. Then
\begin{itemize}
\item[(iii)] $S'_{t}$ is a smooth surface of degree nine for $t\in k$ general and $S'_{0}$ is a surface of degree nine which is singular along two skew lines; 
\item[(iv)] $S_{t}$ is a smooth del Pezzo surface of degree five for $t\in k$ general and $S'_0$ is the union of two conjugate planes and a Fermat cubic surface;
\item[(v)] there exists a family of surfaces $K_t\subset\mathbb{P}^8$ such that $K_{t}$ is a smooth $K3$ surface of degree $\deg(K_{t}) = 12$ and $S'_{t}$ is the projection of $K_{t}$ from a $5$-secant $3$-plane for $t \in k$ general and the same holds also for $t = 0$.
\end{itemize}
In particular, the Fermat cubic $4$-fold $X^4_0$ lies in $\mathcal{C}_{8}\cap\mathcal{C}_{14}$.
\end{thm}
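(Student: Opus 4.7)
The strategy is to construct $\varphi_t$ as an explicit polynomial deformation of the rationality parametrization $\varphi_0:\mathbb{P}^4\dashrightarrow X^4_0$ of the Fermat cubic $4$-fold produced in Theorem~\ref{main1}, and then to read off the geometric assertions (iii)--(v) by analyzing how the base loci of $\varphi_t$ and of its inverse degenerate as $t\to 0$. The precise cubic forms $A$ and $B$ in the statement are meant to be the output, not the input, of this construction: the requirement that the deformation take values in $X^4_t$ forces them.

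For the construction itself, I would start from the six quartic components $(F_0,\ldots,F_5)$ of $\varphi_0$, obtained via the restriction of scalars construction of Section~\ref{res_scal}, and seek a family $\varphi_t = \varphi_0 + t\,\psi_1 + t^2\,\psi_2$ with quartic components in $k[x_0,\ldots,x_4]$, subject to the scheme-theoretic constraint $G_t\circ\varphi_t\equiv 0$. Expanding in powers of $t$ and collecting coefficients yields a finite linear system on the coefficients of $\psi_1,\psi_2,A,B$, whose solvability up to rescaling simultaneously produces the quartic deformations and the precise $A,B$ in the statement. Since birationality is an open condition in a flat family of generically finite rational maps and $\varphi_0$ is birational by Theorem~\ref{main1}, the generic $\varphi_t$ is birational as well, giving (i) and (ii).

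For (iii), the base locus $S'_t$ is the vanishing scheme of the six quartics defining $\varphi_t$, and a Hilbert polynomial/B\'ezout computation yields $\dim S'_t = 2$ and $\deg S'_t = 9$; smoothness for general $t$ follows from the Jacobian criterion applied symbolically at one fiber, while at $t=0$ the Jacobian drops rank along two distinguished Galois conjugate skew lines intrinsic to the Fermat rationality construction. For (iv), the inverse $\varphi_t^{-1}$ is the restriction to $X^4_t$ of the linear system of quadrics through a surface $S_t$, computed directly from the equations of $\varphi_t$: for general $t$ this is Fano's classical rationality construction for cubic $4$-folds in $\mathcal{C}_{14}$, forcing $S_t$ to be a smooth quintic del Pezzo surface, while at $t=0$ the quadric ideal degenerates so that $S_0$ splits as the union of the two Galois conjugate planes $\Pi,\overline{\Pi}$ of the Fermat construction and a residual Fermat cubic surface in $X^4_0$, whose degrees $1+1+3=5$ match that of a quintic del Pezzo.

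For (v), the classical description of cubic $4$-folds in $\mathcal{C}_{14}$ realizes $S'_t$ as a projection of a smooth K3 surface $K_t\subset\mathbb{P}^8$ of degree $12$ from a $5$-secant $3$-plane; I would recover $K_t$ intrinsically via the biliaison linking it to $S'_t$, and flatness of this construction in $t$ propagates the needed properties to the special fiber. The final assertion $X^4_0\in\mathcal{C}_{8}\cap\mathcal{C}_{14}$ then follows because $X^4_0$ contains the plane $\Pi$ of (iv), producing the rank two discriminant $8$ sublattice of $H^{2,2}(X^4_0,\mathbb{Z})$ spanned by $h^2$ and $[\Pi]$, and it inherits a discriminant $14$ sublattice from nearby fibers by specialization, since $\mathcal{C}_{14}$ is closed in $\mathcal{C}$. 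The main technical obstacle will be the simultaneous determination of $\psi_1,\psi_2,A,B$ in the first step and the careful ideal-theoretic analysis of $S'_0$ and $S_0$ at $t=0$ needed for (iii) and (iv); once these are carried out, the rest reduces to openness, semicontinuity, and flatness arguments.
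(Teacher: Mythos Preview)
Your approach reverses the logical order of the paper's construction, and this creates genuine difficulties in parts (iv) and (v).

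The paper does not start from $\varphi_0$ and deform it. Instead, it first constructs an explicit family $\mathcal{S}_t\subset\mathbb{P}^5$ of quintic del Pezzo surfaces over $k(t)$, by mapping $\mathbb{P}^2$ to $\mathbb{P}^9$ via a linear system of plane quartics extending the one that parametrizes the Fermat cubic surface, and then projecting to $\mathbb{P}^5$. The family of cubics $X_t^4$ is then \emph{defined} as a pencil of cubics through $\mathcal{S}_t$; the specific forms $A,B$ are outputs of this step. The map $\qoppa_t:X_t^4\dashrightarrow\mathbb{P}^4$ is Fano's map given by the quadrics through $\mathcal{S}_t$, and $\varphi_t$ is computed as its inverse. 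With this order of construction, (iv) is immediate by design, and the specialization $S_0=X^2\cup H^2_{\pm}$ is read off from the explicit quadric equations at $t=0$.

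Your proposal for (iv) contains a circularity: you assert that because $\varphi_t^{-1}$ is given by quadrics through a surface $S_t$, ``this is Fano's classical rationality construction for cubic $4$-folds in $\mathcal{C}_{14}$, forcing $S_t$ to be a smooth quintic del Pezzo.'' But Fano's construction \emph{begins} with a quintic del Pezzo; there is no general principle that the base locus of a quadratic birational map $X^4\dashrightarrow\mathbb{P}^4$ must be a quintic del Pezzo. Without the paper's explicit construction of $\mathcal{S}_t$ first, you would have to verify this by computing the ideal of $S_t$ and checking its invariants, which is precisely the computation you are trying to avoid.

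For (v), your invocation of ``biliaison'' and an unspecified ``classical description'' is too vague to be a proof. The paper's mechanism is concrete and geometric: Proposition~\ref{seg} shows that $\overline{\mathcal{S}}_t$ lies on five Segre $3$-folds, each contracted by $\qoppa_t$ to a line in $\mathbb{P}^4$ lying in $S'_t$. Two of these lines, coming from the two conjugate multiplicity-two base points of the quartic linear system, form a Galois-conjugate pair $\mathcal{Z}_t\subset S'_t$ defined over $k(t)$ by explicit equations. The $K3$ surface $K_t$ is then the image of $S'_t$ under the linear system of quadrics through $\mathcal{Z}_t$, and $S'_t$ is recovered as the projection of $K_t$ from a $5$-secant $3$-plane. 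This construction is what makes the specialization to $t=0$ transparent; an abstract biliaison argument would not give you control over the central fiber.
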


In Section \ref{sec_deg} we describe another similar family of cubic $4$-folds. These $4$-folds have the advantage of being simpler that those in Theorem \ref{th_spec} but on the other hand the surfaces in the corresponding family $K_{t}$ are more complicated. Still this family is quite interesting since it also specializes to the Fermat cubic $4$-fold and the general surface in the family $S'_t$ is a surface of degree nine and singular in four distinct points. Concerning the last statement of Theorem \ref{th_spec} we mention that by \cite[Theorem 1.2]{YY23} the Fermat cubic $4$-fold is contained in all Hassett's divisors. 

As an application of our rationality construction in Section \ref{ratpQ} we get bounds on the number of rational points of bounded height of $X^{2n}$. Let $k = \mathbb{Q}$ and $p\in\mathbb{P}^n$ a point. The reduced representative $q\in\mathbb{P}^n$ of $p$ is the point $q = [q_0:\dots:q_n]$, with $q_i\in\mathbb{Z}$, such that $p = \lambda q$ for some non zero $\lambda\in\mathbb{Q}$ and $\gcd(q_0,\dots,q_n) = 1$. The height of $p$ is defined as $$ht(p) = \max\{|q_0|,\dots,|q_n|\}.$$ 
This notion of height can be generalized to any number field, we refer to \cite[Definition 1.5.4]{BoG06} for details.

\begin{thm}\label{ratpQ}
Let $k$ be a number field, and denote by $X^{2n}_B(k)$ the set of rational points of the Fermat cubic surface $X^{2n}\subset\mathbb{P}^{2n+1}$ whose height is bounded by $B\in\mathbb{N}$:
$$
X^{2n}_B(k) = \{p\in X^{2n}(k) \: | \: ht(p)\leq B\}.
$$
Then asymptotically for $B\rightarrow\infty$ we have that 
$$
B^{\frac{2n+1}{4}}\leq \sharp X^{2n}_B(k)\leq B^{4n+2}
$$
for all $n\geq 1$. Furthermore, when $n = 2$ these bounds hold for the general cubic $4$-fold of the family $X_t^4$ in Theorem \ref{th_spec}, and when $n = 1$ the lower bound can be improved to $\sharp X^{2}_B(k)\geq B$.
\end{thm}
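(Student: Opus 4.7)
The plan is to combine the explicit birational parametrizations of Theorem \ref{main1} with the classical Schanuel-type counting estimate
$$
\sharp\{p \in \mathbb{P}^m(k) : ht(p) \leq H\} \; \asymp \; H^{m+1}
$$
for $\mathbb{P}^m$ over a number field $k$. By Theorem \ref{main1} there is a birational map $\varphi \colon \mathbb{P}^{2n}_k \dasharrow X^{2n}$ defined by homogeneous forms of degree $d$, with $d = 4$ for $n \geq 2$ and $d = 3$ for $n = 1$, whose inverse $\varphi^{-1}\colon X^{2n}\dasharrow \mathbb{P}^{2n}_k$ is defined by a linear system of quadrics. Both inequalities will follow by transporting rational points through $\varphi$ and $\varphi^{-1}$, using the elementary fact that the projective height of the image of a point under a map given by forms of degree $e$ is bounded above by a constant multiple of the $e$-th power of the height of the input.

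For the lower bound, if $p = [p_0:\dots:p_{2n}] \in \mathbb{P}^{2n}(k)$ is in reduced form with $ht(p) \leq H$, then writing $\varphi(p) = [f_0(p):\dots:f_{2n+1}(p)]$ each coordinate is bounded by $C H^{d}$ for a constant $C$ depending only on the coefficients of the $f_i$, so $ht(\varphi(p)) \leq C H^{d}$. Since $\varphi$ is birational it is injective on the complement of a proper subvariety $V\subset\mathbb{P}^{2n}$, whose bounded-height rational points form a lower-order contribution. Setting $H \sim (B/C)^{1/d}$ and applying the counting estimate to $\mathbb{P}^{2n}(k)$ yields at least $\gtrsim B^{(2n+1)/d}$ distinct rational points on $X^{2n}$ of height $\leq B$; this is $B^{(2n+1)/4}$ for $n\geq 2$ and $B^{3/3} = B$ for $n=1$, matching the improved bound stated in the final sentence of the theorem.

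For the upper bound we run the argument in reverse through $\varphi^{-1}$. A rational point $q \in X^{2n}(k)$ with $ht(q)\leq B$ that lies outside the indeterminacy locus of $\varphi^{-1}$ maps to a point $\varphi^{-1}(q) \in \mathbb{P}^{2n}(k)$ of height at most $C' B^{2}$, since $\varphi^{-1}$ is given by quadrics. The counting estimate then bounds the number of such $q$ by $\lesssim (C' B^{2})^{2n+1} = O(B^{4n+2})$, while rational points in the indeterminacy locus of $\varphi^{-1}$ lie on a lower-dimensional subscheme and thus contribute only lower-order terms.

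The same reasoning applies \emph{mutatis mutandis} to the general member of the family $X_t^4$ of Theorem \ref{th_spec}: for generic $t \in k$ the map $\varphi_t$ is a birational parametrization by quartics with inverse of degree two, so the identical degree bookkeeping produces the same bounds. The only technicality throughout is verifying that the base loci of $\varphi$, $\varphi^{-1}$, $\varphi_t$ and $\varphi_t^{-1}$ are genuine proper subvarieties so that their bounded-height $k$-points are indeed of lower order; this is immediate from the explicit base-locus descriptions recorded in Theorem \ref{Cre} and in the proof of Theorem \ref{th_spec}. No serious obstacle is anticipated: the whole proof is essentially a transport-of-height argument through the two degree bounds provided by the rationality constructions.
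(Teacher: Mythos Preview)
Your proposal is correct and follows essentially the same approach as the paper: transport heights through the degree-$4$ (resp.\ degree-$3$ for $n=1$) parametrization $\overline{\varphi}_{2n}$ for the lower bound and through the degree-$2$ inverse $\qoppa_{2n}$ for the upper bound, invoking the Schanuel/Peyre count for $\mathbb{P}^{2n}$ and discarding the base loci as lower-order terms. The only cosmetic discrepancy is that the relevant base-locus descriptions are in Lemmas \ref{Irr}, \ref{lsing} and Propositions \ref{linsys4}, \ref{inv1} rather than Theorem \ref{Cre}, and the paper cites \cite{Pi95}, \cite{Pe02} explicitly for the point-counting inputs.
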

As for Theorem \ref{main1} it will be clear that Theorem \ref{ratpQ} holds more generally for any smooth cubic containing two skew and conjugate half dimensional linear spaces. 

In the same notation as above we recall that the anti-canonical height of a point $p\in X\subset\mathbb{P}^{2n+1}$, where $X$ is a Fano hypersurface of degree $d$, is defined as $ht_{-K}(p) = (q_0^2+\dots +q_n^2)^{\frac{2n+2-d}{2}}$. Manin's conjecture predicts the existence of a non empty Zarisky open subset $\mathcal{U}\subset X$ such that the number of points of $\mathcal{U}$ of anti-canonical height at most $B$ grows as $cB(\log B)^{\rho(X)-1}$, where $c$ is the Peyre's constant \cite{Pey95} and $\rho(X)$ is the Picard rank of $X$, for $B\rightarrow \infty$ \cite{FMT89}. As a consequence of Theorem \ref{ratpQ} we get that the number of points of anti-canonical height at most $B$ of $X^{2n}$ lies in between $B^{\frac{1}{4}}$ and $B^2$. This fits into Manin's conjecture since when $n\geq 2$ we have $\rho(X^{2n}) = 1$ and the above formula becomes $cB(\log B)^{\rho(X)-1} = cB$. In the case $n = 1$ a better lower bound for the number of points of bounded height of $X^{2}$, and more generally of del Pezzo surfaces, can be found in \cite[Theorem 1.1]{FLS18}.   

Finally, we will exhibit some applications to the number of points on finite fields of the singular complete intersections and $K3$ surfaces appearing in our rationality constructions, and discuss how these fit in the general framework initiated by P. Deligne in \cite{Del74} and extended to the singular case by C. Hooley in \cite{Hoo91}.

\subsection*{Conventions on the base field and terminology} 
Let $X$ be a variety over a field $k$. When we say that $X$ is rational, without specifying over which field, we will always mean that $X$ is rational over the base field $k$. Similarly, we will say that $X$ has a point or contains a variety with certain properties meaning that $X$ has a $k$-rational point or contains a variety defined over $k$ with the required properties. 

\subsection*{Acknowledgments}
This paper originated from a question posed by \textit{Stefan Schreieder} during the conference ``\textit{Real and complex birational geometry}'' held at the University of Milan from May $15$ to May $17$, $2023$. I want to thank Stefan and the organizers of the conference \textit{Elisabetta Colombo}, \textit{Paolo Stellari} and \textit{Luca Tasin}. I want to point out that Stefan had thought independently and more or less simultaneously about the argument in Section \ref{res_scal}. I also thank \textit{Brendan Hassett} for sharing with me insights on the (uni)rationality problem for low degree hypersurfaces, \textit{Jean-Louis Colliot-Th\'el\`ene} for pointing me out known results on the rationality of cubics, \textit{Francesco Russo} for explaining me the geometry behind Proposition \ref{seg}, \textit{Giovanni Staglian\`o} for helpful discussions on Cremona transformations, \textit{Federico Caucci} for helpful comments, and \textit{Gianluca Grassi} for carefully proofreading a preliminary version of the paper.

\section{Cubic surfaces and $4$-folds}\label{S4F}
In this section we recall the state of the art on the rationality problem for cubic $4$-folds and prove some preliminary results for the Fermat cubic surface over an arbitrary field.

\subsection{Cubic $4$-folds}\label{4F}
Smooth cubic $4$-folds are parametrized by an open subset $U$ of $\mathbb{P}^{55} = \mathbb{P}(k[x_0,\dots,x_5]_3)$. The complementary set $\Delta = \mathbb{P}^{55}\setminus U$, parametrizing singular cubic hypersurfaces, is a hypersurface in $\mathbb{P}^{55}$ of degree $\deg(\Delta) = 192$. The group $PGL(6)$ acts on $U$ simply by 
$$
\begin{array}{cccc}
 PGL(6) \times U & \rightarrow & U\\
  (\alpha,X) & \mapsto & \alpha(X)
\end{array}
$$ 
and the moduli space of smooth cubic $4$-folds is the quotient $\mathcal{C} = [U/PGL(6)]$. It is an irreducible quasi-projective variety of dimension $\dim(\mathcal{C}) = 20$.

Let $X\subset\mathbb{P}^5$ be a smooth cubic $4$-fold and consider $H^{2,2}(X,\mathbb{Z}) = H^4(X,\mathbb{Z})\cap H^2(X,\Omega_X^2)$. C. Voisin proved that if $[X]\in \mathcal{C}$ is very general then $H^{2,2}(X,\mathbb{Z}) = \mathbb{Z}[h^2]$ where $h$ is the class of a hyperplane section of $X$ \cite{Voi86}. Assume that $X$ contains a $2$-dimensional integral effective cycle $S$ such $K = \left\langle h,S\right\rangle\subset H^{2,2}(X,\mathbb{Z})$ has rank two. On the sublattice $K\subset H^{2,2}(X,\mathbb{Z})$ we have an intersection form given by the matrix
$$
\left(\begin{array}{cc}
h^4 & h^2\cdot S \\ 
S\cdot h^2 & S^2
\end{array}\right) = 
\left(\begin{array}{cc}
3 & \deg(S) \\ 
\deg(S) & S^2
\end{array}\right) 
$$
where $S^2$ is the self-intersection of $S$ in $X$. The discriminant of $K$ is defined as 
$$
|K| = 
\det\left(\begin{array}{cc}
h^4 & h^2\cdot S \\ 
S\cdot h^2 & S^2
\end{array}\right)  = 3S^2 - \deg(S)^2.
$$
B. Hassett defined the Noether-Lefschetz loci as 
$$
\mathcal{C}_d = \{[X]\in \mathcal{C} \: | \text{ there exists } K\subset H^{2,2}(X,\mathbb{Z}),\:\rank(K) = 2,\: h^2\in K,\: |K| = d\}
$$
where $d\in\mathbb{Z}$. The loci $\mathcal{C}_d$ are either empty or divisors in $\mathcal{C}$. Furthermore, $\mathcal{C}_d\neq\emptyset$ if and only if $d > 6$ and $d\equiv 0,2\mod 6$, and if $[X]\in\mathcal{C}_d$ is very general then $H^{2,2}(X,\mathbb{Z}) = \left\langle h^2,S\right\rangle$ for some algebraic surface $S\subset X$ \cite{Has99}, \cite{Has00}. When $S$ is smooth we have $S^2 = 6h^2\cdot S+3h\cdot K_S+K_S^2-\chi_S$.

For instance, if $S\subset X$ is a plane we have $S^2 = 6-9+9-3 = 3$. Then $|K| = 8$ and $\mathcal{C}_8$ is the divisor parametrizing cubic $4$-folds containing a plane. Now, let $S\subset\mathbb{P}^5$ be a degree five del Pezzo surface. Then $S^2 = 30-15+5-7 = 13$, $|K| = 14$ and hence $\mathcal{C}_{14}$ is the divisor parametrizing cubic $4$-folds containing a degree five del Pezzo surface. 

An even integer $d > 6$ is admissible if it is not divisible by $4,9$ or any odd prime number congruent to $2$ modulo $3$. B. Hassett proved that $d$ is admissible if and only if the orthogonal complement of the corresponding lattice $K$ in $H^4(X,\mathbb{Z})$ is Hodge isometric to the primitive Hodge structure $H^2(S,\mathbb{Z})_{prim}$ of a polarized $K3$ surface $S$ \cite{Has99}. Kuznetsov’s conjecture predicts that a smooth cubic $4$-fold $X\subset\mathbb{P}^5$ is rational if and only if $[X]\in\mathcal{C}_d$ with $d$ admissible.

\subsection{Cubic surfaces} 
We give three different birational parametrization of the Fermat cubic surface $X^2$ over a field of characteristic different from three. Even if at this stage it might seem they come from nowhere we will see that one of them is deeply related to the geometry of $X^2$ and can be generalized to Fermat cubic hypersurfaces of arbitrary even dimension, and that the other parametrization can be related to this special one by quadratic Cremona's transformations.  

Let $k(\xi)$ be a quadratic extension of $k$ with $\xi^2 = -3$. In $\mathbb{P}^2_{(u_0,u_1,u_2)}$ consider the points
$$
\begin{array}{ll}
p_{1,+} = [\xi:1:0], & p_{1,-} = [-\xi:1:0]; \\ 
p_{2,+} = [1+\xi:0:2], & p_{2,-} = [1-\xi:0:2]; \\ 
p_3 = [1:0:-1]; &
\end{array} 
$$
the lines $L_{1,+} = \{u_0-\xi u_1 = 0\},\: L_{1,-} = \{u_0+\xi u_1 = 0\}$, and denote by 
$$\mathcal{L}_{\overrightarrow{2p}_{1,\pm}^{L_{1,\pm}},p_{2,\pm},p_3}^{4}\subset|\mathcal{O}_{\mathbb{P}^2}(4)|$$ 
the linear system of plane quartics having multiplicity at least two in $p_{1,+}$ and $L_{1,+}$ as a fixed principal tangent at $p_{1,+}$, multiplicity at least two in $p_{1,-}$ and $L_{1,-}$ as a fixed principal tangent at $p_{1,-}$, and passing through $p_{2,+},p_{2,-},p_3$.

In $\mathbb{P}^2_{(v_0,v_1,v_2)}$ set
$$
\begin{array}{ll}
q_{1,+} = [-2\xi:\xi:1], & q_{1,-} = [2\xi:-\xi:1];\\ 
q_{2,+} = [-1+\xi:2:0], & q_{1,-} = [-1-\xi:2:0];\\ 
q_{3,+} = [0:\xi:1], & q_{3,-} = [0:-\xi:1];
\end{array} 
$$
and let
$$\mathcal{L}_{q_{1,\pm},q_{2,\pm},q_{3,\pm}}^{3}\subset|\mathcal{O}_{\mathbb{P}^2}(3)|$$
be the linear system of plane cubics passing through $q_{1,+},q_{1,-},q_{2,+},q_{1,-},q_{3,+},q_{3,+}$. 

\begin{Lemma}\label{CreEq}
The quadratic Cremona transformation
$$
\begin{array}{cccc}
 cr: & \mathbb{P}^2_{(u_0,u_1,u_2)} & \dasharrow & \mathbb{P}^2_{(v_0,v_1,v_2)}\\
  & [u_0:u_1:u_2] & \mapsto & [u_0^2 + 3u_1^2 - u_2^2:u_0u_2 + u_2^2:u_1u_2],
\end{array}
$$
induced by the the linear system of conics through $p_{1,+},p_{1,-},p_3$, yields a Cremona equivalence between the linear systems $\mathcal{L}_{\overrightarrow{2p}_{1,\pm}^{L_{1,\pm}},p_{2,\pm},p_3}^{4}$ and $\mathcal{L}_{q_{1,\pm},q_{2,\pm},q_{3,\pm}}^3$.
\end{Lemma}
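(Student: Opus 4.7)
The plan is to analyze $cr$ via its blow-up factorization and chase through how the linear system $\mathcal{L}_{\overrightarrow{2p}_{1,\pm}^{L_{1,\pm}},p_{2,\pm},p_3}^{4}$ transforms. First I would verify that each of the three conics $u_0^2+3u_1^2-u_2^2$, $u_0u_2+u_2^2$, $u_1u_2$ vanishes at all three points $p_{1,+},p_{1,-},p_3$ and that they are linearly independent, so $cr$ is the standard quadratic Cremona transformation with base locus $\{p_{1,+},p_{1,-},p_3\}$. Let $\pi_1:Y\to\mathbb{P}^2_{(u_0,u_1,u_2)}$ be the blow-up of these three points, with exceptional divisors $E_{1,+},E_{1,-},E_3$, and let $\pi_2:Y\to\mathbb{P}^2_{(v_0,v_1,v_2)}$ be the contraction of the strict transforms $F_{1,+,1,-},F_{1,+,3},F_{1,-,3}$ of the three lines joining pairs of base points, so that $cr=\pi_2\circ\pi_1^{-1}$. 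By direct substitution I would identify the three contraction points as $\pi_2(F_{1,+,1,-})=[1:0:0]$, $\pi_2(F_{1,+,3})=q_{1,+}$ and $\pi_2(F_{1,-,3})=q_{1,-}$.

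For a general $C\in\mathcal{L}_{\overrightarrow{2p}_{1,\pm}^{L_{1,\pm}},p_{2,\pm},p_3}^{4}$ the strict transform $\widetilde{C}$ has class $4H-2E_{1,+}-2E_{1,-}-E_3$ in $\Pic(Y)$. Intersection theory on $Y$ gives $\deg\pi_2(\widetilde{C})=\widetilde{C}\cdot(2H-E_{1,+}-E_{1,-}-E_3)=3$, and multiplicities $\widetilde{C}\cdot F_{ij}$ equal to $0,1,1$ at the three contraction points respectively, so the image cubic passes through $q_{1,+}$ and $q_{1,-}$ but avoids $[1:0:0]$. A direct evaluation of $cr$ on $[1\pm\xi:0:2]$ also yields $cr(p_{2,\pm})=q_{2,\pm}$, contributing two more base points of the image system.

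To account for the conditions at $q_{3,\pm}$ I would analyze the tangent data. The hypothesis that $L_{1,+}$ is a principal tangent of $C$ at $p_{1,+}$ forces $\widetilde{C}$ to pass through the infinitely near point $e_{1,+}:=\widetilde{L}_{1,+}\cap E_{1,+}$, so $\pi_2(\widetilde{C})$ passes through $\pi_2(e_{1,+})$, which lies in $\pi_2(\widetilde{L}_{1,+})\cap\pi_2(E_{1,+})$. Parametrizing $L_{1,+}$ as $[\xi s:s:t]$ and substituting into $cr$ yields $\pi_2(\widetilde{L}_{1,+})=\{v_0+v_1-\xi v_2=0\}$. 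Meanwhile $\pi_2(E_{1,+})$ is the line through the two contraction points lying on $E_{1,+}$, namely $[1:0:0]$ and $q_{1,+}$, which is $\{v_1-\xi v_2=0\}$. Their intersection is precisely $q_{3,+}=[0:\xi:1]$, and the symmetric computation at $p_{1,-}$ gives $q_{3,-}$.

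Finally, a dimension count verifies the match: $\dim\mathcal{L}_{\overrightarrow{2p}_{1,\pm}^{L_{1,\pm}},p_{2,\pm},p_3}^{4}=14-4-4-3=3$ and $\dim\mathcal{L}_{q_{1,\pm},q_{2,\pm},q_{3,\pm}}^{3}=9-6=3$. Since $cr$ is birational and the previous steps exhibit a linear injection of the quartic system into the cubic system, this injection must be an isomorphism, proving the Cremona equivalence. I expect the main obstacle to be the tangent analysis: one must verify the numerical coincidence that the line $\pi_2(E_{1,+})$ through $[1:0:0]$ and $q_{1,+}$ meets the image line $\pi_2(\widetilde{L}_{1,+})$ exactly at $q_{3,+}$, and carry out the analogous verification at $p_{1,-}$.
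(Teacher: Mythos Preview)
Your proposal is correct and follows essentially the same approach as the paper: factor the quadratic Cremona through the blow-up of $\{p_{1,+},p_{1,-},p_3\}$, compute the degree and multiplicities of the image of a general quartic via intersection theory, and track the fixed-tangent conditions along the exceptional divisors $E_{1,\pm}$ to produce the base points $q_{3,\pm}$. The paper carries out exactly these steps (using the classical degree formula $\deg(\Gamma)=2\deg(C)-\sum\mult_{p_i}(C)$ in place of your Picard-group computation), and even records the same line $\{v_1-\xi v_2=0\}=\pi_2(E_{1,+})$ that you identify.

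The only difference is in the last step. The paper closes the argument by running the same analysis on the explicit inverse $cr^{-1}$ (the Cremona centered at $q_{1,+},q_{1,-},q=[1:0:0]$) and checking directly that it sends $\mathcal{L}^3_{q_{1,\pm},q_{2,\pm},q_{3,\pm}}$ back into $\mathcal{L}^4_{\overrightarrow{2p}_{1,\pm}^{L_{1,\pm}},p_{2,\pm},p_3}$. You instead invoke a dimension count. Your route is slightly more economical, but note that the equalities $\dim\mathcal{L}^4=3$ and $\dim\mathcal{L}^3=3$ require the imposed conditions to be independent; this is easy to verify (for instance by exhibiting four independent sections, which the paper does anyway in the subsequent propositions), but you should say so rather than treat the expected-dimension computation as automatic.
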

\begin{proof}
The inverse of $cr$ is given by 
\stepcounter{thm}
\begin{equation}\label{crI}
\begin{array}{cccc}
 cr^{-1}: & \mathbb{P}^2_{(v_0,v_1,v_2)} & \dasharrow & \mathbb{P}^2_{(u_0,u_1,u_2)}\\
  & [v_0:v_1:v_2] & \mapsto & [v_0v_1 + v_1^2 - 3v_2^2,v_0v_2 + 2v_1v_2,v_1^2 + 3v_2^2]
\end{array}
\end{equation}
and it is induced by the linear system of conics through $q_{1,+},q_{1,-}$ and $q = [1:0:0]$. Let $C$ be a general quartic in $\mathcal{L}_{\overrightarrow{2p}_{1,\pm}^{L_{1,\pm}},p_{2,\pm},p_3}^{4}$ and $\Gamma$ its image via $cr$. Then 
$$\deg(\Gamma) = 2\deg(C)-\mult_{p_{1,+}}(C)-\mult_{p_{1,-}}(C)-\mult_{p_{3}}(C) = 3.$$
Furthermore, $cr$ contracts the line $\left\langle p_{1,+},p_3\right\rangle$ to $q_{1,+}$, the line $\left\langle p_{1,-},p_3\right\rangle$ to $q_{1,-}$ and the line $\left\langle p_{1,+},p_{1,-}\right\rangle$ to $q$. Note that $\mult_{p_{1,\pm}}(C) = 2$ and $\mult_{p_{2,\pm}}(C) =\mult_{p_{3}}(C) = 1$ yield
$$
\mult_{q_{1,\pm}}(\Gamma) = \mult_{q_{2,\pm}}(\Gamma) = 1.
$$ 
Denote with $\widetilde{\mathbb{P}}^2_{(u_0,u_1,u_2)}$ the blow-up of $\mathbb{P}^2_{(u_0,u_1,u_2)}$ at $p_{1,+},p_{1,-},p_3$ with exceptional divisors $E_{1,+},E_{1,-},E_3$, with $\widetilde{\mathbb{P}}^2_{(v_0,v_1,v_2)}$ the blow-up of $\mathbb{P}^2_{(v_0,v_1,v_2)}$ at $q_{1,+},q_{1,-},q$, and with $\widetilde{cr},\widetilde{cr}^{-1}$ the isomorphisms induced by $cr,cr^{-1}$. We summarize the situation in the following diagram:
$$
\begin{tikzcd}
{\widetilde{\mathbb{P}}^2_{(u_0,u_1,u_2)}} \arrow[rr, "\widetilde{cr}", bend left = 20] \arrow[d, "\pi_u"', shift right] &  & {\widetilde{\mathbb{P}}^2_{(v_0,v_1,v_2)}} \arrow[ll, "\widetilde{cr}^{-1}"', bend left = 20, shift right] \arrow[d, "\pi_v"] \\
{\mathbb{P}^2_{(u_0,u_1,u_2)}} \arrow[rr, "cr",dotted, bend left = 20]                                                &  & {\mathbb{P}^2_{(v_0,v_1,v_2)}} \arrow[ll, "cr^{-1}"', dotted, bend left = 20]                                               
\end{tikzcd}
$$
Since $C$ has fixed principal tangents $L_{1,+},L_{1,-}$ at $p_{1,+},p_{1,-}$ its strict transform $\widetilde{C}$ intersects $E_{1,+},E_{1,-}$ in two fixed conjugate points that are mapped to $q_{3,+},q_{3,-}$ by $\pi_v\circ\widetilde{cr}$. Hence, $\Gamma$ belongs to the linear system $\mathcal{L}_{q_{1,\pm},q_{2,\pm},q_{3,\pm}}^{3}$. Finally, arguing similarly on $cr^{-1}$, and noting that $q_{1,+},q_{3,+},q$ lie on the line $\{v_1-\xi v_2 =0\}$ and $q_{1,-},q_{3,-},q$ lie on the line $\{v_1+\xi v_2 =0\}$, we get that it maps sections of $\mathcal{L}_{q_{1,\pm},q_{2,\pm},q_{3,\pm}}^{3}$ to sections of $\mathcal{L}_{\overrightarrow{2p}_{1,\pm}^{L_{1,\pm}},p_{2,\pm},p_3}^{4}$.
\end{proof}

In $\mathbb{P}^3_{(x_0,x_1,x_2,x_3)}$ consider the lines 
$$
\begin{array}{l}
L_{+} = \{2x_0-(1+\xi)x_1 = 2x_2-(1+\xi)x_3 = 0\};\\
L_{-} = \{2x_0-(1-\xi)x_1 = 2x_2-(1-\xi)x_3 = 0\};
\end{array} 
$$
the point $s = [1:-1:0:0]$, and let $\mathcal{L}_{L_{\pm},s}^2\subset |\mathcal{O}_{\mathbb{P}^3}(2)|$ be the linear system of quadric surfaces containing $L_{+},L_{-},s$.

\begin{Proposition}\label{BiRP2}
The linear system $\mathcal{L}_{\overrightarrow{2p}_{1,\pm}^{L_{1,\pm}},p_{2,\pm},p_3}^{4}$ yields a birational parametrization of the Fermat cubic surface
$$
\begin{array}{cccc}
 \varphi :& \mathbb{P}^2 & \dasharrow & X^2\subset\mathbb{P}^3\\
  & [u_0:u_1:u_2] & \mapsto & [\varphi_0:\dots:\varphi_3]
\end{array}
$$
where
\begin{small}
$$
\begin{array}{ll}
\varphi_0 =& -u_0^4 - 6u_0^2u_1^2 - 9u_1^4 - u_0u_2^3 - 3u_1u_2^3;\\ 
\varphi_1 =& u_0^4 + 6u_0^2u_1^2 + 9u_1^4 + u_0u_2^3 - 3u_1u_2^3;\\ 
\varphi_2 =& -u_0^3u_2 + 3u_0^2u_1u_2 - 3u_0u_1^2u_2 + 9u_1^3u_2 - u_2^4;\\ 
\varphi_3 =& u_0^3u_2 + 3u_0^2u_1u_2 + 3u_0u_1^2u_2 + 9u_1^3u_2 + u_2^4.
\end{array} 
$$
\end{small}
Furthermore, the linear system $\mathcal{L}_{L_{\pm},s}^2$ induces the birational inverse of $\varphi$.
\end{Proposition}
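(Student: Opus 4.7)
The plan is to verify the claim in four steps. First, check directly that $\varphi(\mathbb{P}^2)\subset X^2$ via the polynomial identity $\varphi_0^3+\varphi_1^3+\varphi_2^3+\varphi_3^3 = 0$ in $k[u_0,u_1,u_2]$, which is a single cubic algebraic check. Since the four $\varphi_i$ are linearly independent, $\varphi$ is not contained in any hyperplane of $\mathbb{P}^3$ and hence dominates the irreducible cubic $X^2$.

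Second, confirm that $\varphi_0,\ldots,\varphi_3$ form a basis of $\mathcal{L}^{4}_{\overrightarrow{2p}_{1,\pm}^{L_{1,\pm}},p_{2,\pm},p_3}$. Evaluating each $\varphi_i$ together with its first partials at the prescribed points verifies the multiplicity-two conditions at $p_{1,\pm}$ with principal tangents $L_{1,\pm}$ and the simple vanishing at $p_{2,\pm},p_3$. A parameter count then pins down the dimension: plane quartics form $\mathbb{P}^{14}$; a double point with fixed principal tangent imposes $4$ independent linear conditions, so the pair $p_{1,\pm}$ impose $8$, and the three simple base points impose $3$ more, leaving a linear system of projective dimension $14-11 = 3$.

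Third, establish birationality by a degree computation. Resolve the indeterminacies on $\pi:\widetilde{\mathbb{P}}^2 \to \mathbb{P}^2$ by blowing up $p_{1,\pm}, p_{2,\pm}, p_3$ and then the two infinitely near points on $E_{1,\pm}$ determined by the tangent directions $L_{1,\pm}$. The movable part of the transform of $\mathcal{L}^4$ has class
\[
D = 4H - 2E_{1,+} - 2E_{1,-} - E'_{1,+} - E'_{1,-} - E_{2,+} - E_{2,-} - E_3,
\]
with self-intersection $D^2 = 16 - 4 - 4 - 1 - 1 - 1 - 1 - 1 = 3 = \deg(X^2)$. By the projection formula $D^2 = \deg(\varphi)\cdot\deg(X^2)$, so $\deg(\varphi) = 1$ and $\varphi$ is birational onto $X^2$.

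Finally, identify the inverse with the map induced by $\mathcal{L}^{2}_{L_\pm,s}$. A dimension count shows this linear system has projective dimension $2$: quadrics in $\mathbb{P}^3$ form $\mathbb{P}^9$, each of the skew lines $L_\pm$ imposes $3$ independent conditions and the point $s$ imposes $1$, leaving $9 - 7 = 2$. Hence $\mathcal{L}^{2}_{L_\pm,s}$ defines a rational map $\psi: X^2 \dasharrow \mathbb{P}^2$, and one verifies by direct substitution that $\psi \circ \varphi = \mathrm{id}_{\mathbb{P}^2}$. The main obstacle is bookkeeping: the correct treatment of the two infinitely near base points coming from the fixed tangent conditions, together with the explicit polynomial identities of step one, is mechanical but cumbersome and is best handled via symbolic computation.
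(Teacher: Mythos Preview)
Your proof is correct. The paper's own argument is much terser and purely computational: it checks that $\varphi_0^3+\varphi_1^3+\varphi_2^3+\varphi_3^3=0$, then simply exhibits the three quadrics
\[
2x_0x_2 - x_1x_2 - x_0x_3 + 2x_1x_3,\quad x_2^2 - x_2x_3 + x_3^2,\quad x_1x_2 - x_0x_3
\]
as an explicit basis of $\mathcal{L}_{L_{\pm},s}^2$ and verifies by substitution that they give the birational inverse. Nothing more.

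Your route differs in that you (a) actually verify that $\varphi_0,\dots,\varphi_3$ span the linear system $\mathcal{L}^{4}_{\overrightarrow{2p}_{1,\pm}^{L_{1,\pm}},p_{2,\pm},p_3}$ via the parameter count, which the paper's proof never addresses, and (b) insert the intersection-theoretic step 3 computing $D^2=3$ on the seven-point blow-up. This second addition is conceptually pleasant but logically redundant: once your step 4 shows $\psi\circ\varphi=\mathrm{id}_{\mathbb{P}^2}$, birationality is automatic, so the self-intersection computation is not needed. Conversely, if you keep step 3, step 4 could be shortened to just identifying the linear system of the inverse without checking the composition. What your approach buys is a clear explanation of \emph{why} the map has degree one, rather than a bare verification; what the paper's approach buys is brevity and the explicit quadrics, which are used later in the paper.
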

\begin{proof}
A straightforward computation shows that the image of $\varphi$ is contained in $X^2$. Moreover, the quadrics 
$$
2x_0x_2 - x_1x_2 - x_0x_3 + 2x_1x_3,\: x_2^2 - x_2x_3 + x_3^2,\: x_1x_2 - x_0x_3
$$ 
form a basis of the space of sections of $\mathcal{L}_{L_{\pm},s}^2$, and yield the birational inverse of $\varphi$. 
\end{proof}

In $\mathbb{P}^3_{(x_0,x_1,x_2,x_3)}$ consider the points 
$$
\begin{array}{lll}
r_{1,+} = [1+\xi:2:0:0], & r_{1,-} = [1+\xi:2:0:0]; &  \\ 
r_{2,+} = [0:0:\xi:1], & r_{2,-} = [0:0:-\xi:1]; &  \\ 
r_{3} = [1:-1:0:0], & r_{4} = [2:1:-2:-1], & r_{5} = [1:1:-1:-1];
\end{array} 
$$
and let $\mathcal{L}_{r_{1,\pm},r_{2,\pm},r_3,r_4,r_5}^2\subset |\mathcal{O}_{\mathbb{P}^3}(2)|$ be the linear system of quadrics through $r_{1,\pm},r_{2,\pm},r_3,r_4,r_5$.

\begin{Proposition}\label{BiRP2bis}
The linear system $\mathcal{L}_{q_{1,\pm},q_{2,\pm},q_{3,\pm}}^{3}$ yields a birational parametrization of the Fermat cubic surface
$$
\begin{array}{cccc}
 \chi: & \mathbb{P}^2 & \dasharrow & X^2\subset\mathbb{P}^3\\
  & [v_0:v_1:v_2] & \mapsto & [\chi_0:\dots:\chi_3]
\end{array}
$$
where
\begin{small}
$$
\begin{array}{ll}
\chi_0 =& v_0^3 + 2v_0^2v_1 + 2v_0v_1^2 + v_1^3 + 3v_1^2v_2 + 6v_0v_2^2 + 3v_1v_2^2 + 9v_2^3;\\ 
\chi_1 =& -v_0^3 - 2v_0^2v_1 - 2v_0v_1^2 - v_1^3 + 3v_1^2v_2 - 6v_0v_2^2 - 3v_1v_2^2 + 9v_2^3;\\ 
\chi_2 =& v_0^2v_1 + v_0v_1^2 + v_1^3 - 3v_0^2v_2 - 6v_0v_1v_2 - 3v_1^2v_2 - 3v_0v_2^2 + 3v_1v_2^2 - 9v_2^3;\\ 
\chi_3 =& -v_0^2v_1 - v_0v_1^2 - v_1^3 - 3v_0^2v_2 - 6v_0v_1v_2 - 3v_1^2v_2 + 3v_0v_2^2 - 3v_1v_2^2 - 9v_2^3.
\end{array} 
$$
\end{small}
Furthermore, the linear system $\mathcal{L}_{r_{1,\pm},r_{2,\pm},r_3,r_4,r_5}^2$ induces the birational inverse of $\chi$.
\end{Proposition}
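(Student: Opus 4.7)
The plan is to bootstrap from Proposition~\ref{BiRP2} and Lemma~\ref{CreEq}. Set $\chi := \varphi \circ cr^{-1}$. By Lemma~\ref{CreEq}, the Cremona transformation $cr$ intertwines the linear systems $\mathcal{L}_{q_{1,\pm},q_{2,\pm},q_{3,\pm}}^{3}$ and $\mathcal{L}_{\overrightarrow{2p}_{1,\pm}^{L_{1,\pm}},p_{2,\pm},p_3}^{4}$, so $\chi$ is defined by a basis of $\mathcal{L}_{q_{1,\pm},q_{2,\pm},q_{3,\pm}}^{3}$. Since $\varphi$ is birational onto $X^{2}$ by Proposition~\ref{BiRP2} and $cr^{-1}$ is birational on $\mathbb{P}^{2}$, the composition $\chi$ yields a birational parametrization of $X^{2}$.

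To match the explicit formulas, substitute the degree-two components of $cr^{-1}$ from (\ref{crI}) into the quartics $\varphi_{0},\dots,\varphi_{3}$ of Proposition~\ref{BiRP2}. The results are polynomials of degree $8$ in $(v_{0},v_{1},v_{2})$; they share a common factor of degree $5$ coming from the exceptional divisors of $cr^{-1}$ (the three lines joining pairs of its base points $q_{1,+},q_{1,-}$ and $q = [1\!:\!0\!:\!0]$, taken with appropriate multiplicities), and dividing out this quintic leaves precisely the cubics $\chi_{0},\dots,\chi_{3}$ written in the statement. For the inverse one argues similarly with $\chi^{-1} = cr \circ \varphi^{-1}$: composing the quadrics of $\mathcal{L}_{L_{\pm},s}^{2}$ with the conics defining $cr$ produces a triple of quartics in $(x_{0},\dots,x_{3})$ that, restricted to $X^{2}$, share a common quadric factor and reduce modulo the Fermat cubic to the claimed quadrics. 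A parameter count ($\binom{5}{2}-1 = 9$ quadrics in $\mathbb{P}^{3}$ subject to the seven point conditions imposed by $r_{1,\pm},r_{2,\pm},r_{3},r_{4},r_{5}$) confirms that $\mathcal{L}_{r_{1,\pm},r_{2,\pm},r_3,r_4,r_5}^{2}$ has the expected dimension $3$, i.e.\ gives a map to $\mathbb{P}^{2}$; the five base points themselves are identified as images under $\varphi$ of the relevant base and exceptional loci on the $\mathbb{P}^{2}_{(v)}$ side.

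The main obstacle is the bookkeeping of base loci, namely keeping track of the multiplicities with which the various exceptional components appear in the two compositions and verifying that the common factors one strips off match the degree drops from $8$ to $3$ and from $4$ to $2$ respectively. A self-contained sanity check, bypassing the Cremona-equivalence argument altogether, is to substitute the $\chi_{i}$ into $x_{0}^{3}+x_{1}^{3}+x_{2}^{3}+x_{3}^{3}$ and confirm the identity vanishes, and then to exhibit three explicit quadrics through $r_{1,\pm},r_{2,\pm},r_{3},r_{4},r_{5}$ whose pullback by $\chi$ is proportional to $(v_{0}\!:\!v_{1}\!:\!v_{2})$ modulo a common cubic factor; this verifies simultaneously both the birationality of $\chi$ and the identification of $\mathcal{L}_{r_{1,\pm},r_{2,\pm},r_3,r_4,r_5}^{2}$ as the inverse linear system.
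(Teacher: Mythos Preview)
Your proposal is correct and takes essentially the same approach as the paper: define $\chi = \varphi \circ cr^{-1}$ and invoke Lemma~\ref{CreEq} and Proposition~\ref{BiRP2} for the first part. For the inverse, the paper is terser than you are: rather than deriving it via the composition $cr \circ \varphi^{-1}$ and tracking common factors, it simply writes down three explicit quadrics spanning $\mathcal{L}_{r_{1,\pm},r_{2,\pm},r_3,r_4,r_5}^{2}$ and asserts they give the birational inverse of $\chi$ --- which is precisely the ``self-contained sanity check'' you propose at the end.
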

\begin{proof}
Since $\chi = \varphi\circ cr^{-1}$, where $cr^{-1}$ is the Cremona transformation $(\ref{crI})$, the first part of the claim follows from Lemma \ref{CreEq} and Proposition \ref{BiRP2}. For the second part is enough to note that the quadrics 
$$
x_1x_2 + x_2^2 + x_0x_3 - x_1x_3 - x_2x_3 + x_3^2,\: 2x_0x_2 - x_1x_2 + 2x_2^2 - x_0x_3 + 2x_1x_3 - 2x_2x_3 + 2x_3^2,\: x_1x_2 - x_0x_3
$$ 
form a basis of the space of sections of $\mathcal{L}_{r_{1,\pm},r_{2,\pm},r_3,r_4,r_5}^2$, and yield the birational inverse of $\chi$.
\end{proof}

\begin{Remark}
The parametrization $\chi$ of $X^2$ in Proposition \ref{BiRP2bis} induces an isomorphism between $\mathbb{P}^2$ blown-up at six points and $X^2$, and hence when the six base points are not defined over the base field $\chi$ provides a parametric description of the whole of $X(k)$. 
\end{Remark}

\begin{say}\label{tc}
Next, we develop another rationality construction liked to the existence of a twisted cubic contained in $X^2$. The image of the morphism
$$
\begin{array}{cccc}
 \gamma: & \mathbb{P}^1 & \rightarrow & \mathbb{P}^3\\
  & [a_0:a_1] & \mapsto & [\gamma_0:\dots:\gamma_3]
\end{array}
$$
where
\begin{small}
$$
\begin{array}{ll}
\gamma_0 =& a_0a_1^2;\\ 
\gamma_1 =& -3a_0^2a_1 + 2a_0a_1^2 - a_1^3;\\ 
\gamma_2 =& -3a_0^3 + 3a_0^2a_1 - 2a_0a_1^2 + a_1^3;\\ 
\gamma_3 =& 3a_0^3 - 3a_0^2a_1 + 2a_0a_1^2;
\end{array} 
$$
\end{small}
is a twisted cubic $C = \gamma(\mathbb{P}^1)$ contained in the Fermat cubic surface $X^2$. It comes as the residual intersection of $X^2$ with a general quadric of $\mathbb{P}^3$ containing the three lines $L_{+},L_{-}$ and $\{x_0+x_2 = x_1+x_3 = 0\}$.

We need to recall the notion of variety with one apparent double point. Let $Z\subset\mathbb{P}^N$ be an irreducible and non degenerate variety, $\Gamma_2(Z)\subset Z\times Z\times\G(1,N)$ the closure of the graph of the rational map $\alpha: Z\times Z \dasharrow \G(1,N)$ taking two general points to their linear span, $\pi:\Gamma_2(Z)\to\G(1,N)$ the natural projection, and $\mathcal{S}_2(Z):=\pi(\Gamma_2(Z))\subset\G(1,N)$. Note that $\mathcal{S}_2(Z)$ is irreducible of dimension $2\dim(Z)$. Consider
$$\mathcal{I} = \{(z,\Lambda) \: | \: z\in \Lambda\}\subset\mathbb{P}^N\times\G(1,N)$$
with the projections $\pi_2^Z$ and $\psi_2^Z$ onto the factors. The abstract secant variety is the irreducible variety
$$\Sec_{2}(Z):=(\psi_2^Z)^{-1}(\mathcal{S}_2(Z))\subset \mathcal{I}.$$
The secant variety is defined as
$$\secc_{2}(Z):=\pi_2^Z(\Sec_{2}(Z))\subset\mathbb{P}^N.$$

\begin{Definition}
We say that an irreducible and non degenerate variety $Z\subset\mathbb{P}^{2n+1}$ of dimension $n$ has one apparent double point if $\secc_{2}(Z) = \mathbb{P}^{2n+1}$ and $\pi_2^Z:\Sec_2(Z)\rightarrow\secc_2(Z)$ is birational.  
\end{Definition}

We recall the following well-known fact.

\begin{Proposition}\label{oadp}
Let $Z\subset\mathbb{P}^{2n+1}$ be a variety with one apparent double point and $X\subset\mathbb{P}^{2n+1}$ a smooth cubic hypersurface containing $Z$. Then $X$ is rational. 
\end{Proposition}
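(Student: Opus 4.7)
The plan is to set up a direct birational correspondence between $X$ and the variety of secant lines $\mathcal{S}_2(Z)\subset\G(1,2n+1)$, using the secant construction that is natural for cubic hypersurfaces, and then observe that $\mathcal{S}_2(Z)$ is rational.

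By the one apparent double point hypothesis, the projection $\pi_2^Z:\Sec_2(Z)\to\mathbb{P}^{2n+1}$ is birational, so a general point $p\in\mathbb{P}^{2n+1}$ lies on a unique secant line $\ell_p$ of $Z$. Restricting to $X\subset\mathbb{P}^{2n+1}$, and using that $\dim X = 2n = \dim\mathcal{S}_2(Z)$, we obtain a rational map
\[
\sigma: X\dasharrow \mathcal{S}_2(Z),\qquad x\mapsto\ell_x.
\]

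For the inverse, note that a general $\ell\in\mathcal{S}_2(Z)$ is not contained in $X$, so by B\'ezout the scheme $\ell\cap X$ has length $3$. Two of these three intersection points are precisely the points where $\ell$ meets $Z\subset X$, which are distinct for a generic secant, so the residual intersection consists of a single reduced point $x_\ell\in X\setminus Z$. This defines a rational map
\[
\tau: \mathcal{S}_2(Z)\dasharrow X,\qquad \ell\mapsto x_\ell.
\]
The compositions $\sigma\circ\tau$ and $\tau\circ\sigma$ agree with the identity on suitable open subsets: by construction $x_\ell\in\ell$ and $\ell$ is a secant line, and the uniqueness in the apparent double point property forces $\ell_{x_\ell} = \ell$; symmetrically $x_{\ell_x} = x$ because the only point of $X\setminus Z$ on $\ell_x$ that is different from the two secant-points of $Z\cap \ell_x$ is $x$ itself. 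Hence $\sigma$ is birational and $X$ is birational to $\mathcal{S}_2(Z)$.

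The final step is to note that $\mathcal{S}_2(Z)$ is birational to the symmetric square $\Sym^2(Z)$ via the tautological map sending an unordered pair to its span, so rationality of $X$ reduces to rationality of $\Sym^2(Z)$. In the situations in which Proposition \ref{oadp} is invoked, $Z$ is a rational variety whose symmetric square is rational; in particular for the twisted cubic $Z=C\cong\mathbb{P}^1$ one has $\Sym^2(\mathbb{P}^1)\cong\mathbb{P}^2$, so $X$ is rational of dimension $2$. The main obstacle, and the reason the result is labelled "well-known" rather than tautological, is precisely this last step: the rationality of $\Sym^2(Z)$ is automatic in the geometric settings we care about (twisted cubics, Veronese surfaces, rational normal scrolls with one apparent double point), but it is not a formal consequence of the one-apparent-double-point hypothesis alone, and is what makes the secant construction actually produce a rational parametrization of $X$ rather than merely a unirational one.
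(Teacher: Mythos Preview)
Your birational correspondence between $X$ and $\mathcal{S}_2(Z)$ via the third intersection point is exactly what the paper does, and that part is fine. The gap is in your final step.

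You reduce to the rationality of $\mathcal{S}_2(Z)$ by identifying it with $\Sym^2(Z)$ and then assert that rationality of $\Sym^2(Z)$ is not a formal consequence of the one-apparent-double-point hypothesis, so the proposition only holds in the specific geometric cases you list. This is incorrect: the OADP hypothesis alone \emph{does} force $\mathcal{S}_2(Z)$ to be rational, and this is precisely the step the paper supplies that you are missing. Fix a general hyperplane $H\subset\mathbb{P}^{2n+1}$. A general secant line $L\in\mathcal{S}_2(Z)$ meets $H$ in one point, giving a rational map $\mathcal{S}_2(Z)\dasharrow H\cong\mathbb{P}^{2n}$. Conversely, a general point $p\in H$ is also a general point of $\mathbb{P}^{2n+1}$, and the OADP property says there is exactly one secant line $L_p$ through $p$. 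Hence this map is birational, and $\mathcal{S}_2(Z)$ is rational, with no further hypotheses on $Z$ needed.

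So your detour through $\Sym^2(Z)$ is unnecessary, and your concluding caveat actually weakens the statement you are asked to prove: as written, your argument establishes only a conditional result (rationality of $X$ given rationality of $\Sym^2(Z)$), not Proposition~\ref{oadp} itself. Replace the last paragraph with the hyperplane intersection argument and the proof is complete.
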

\begin{proof}
Let $H\subset\mathbb{P}^{2n+1}$ be a general hyperplane. A general line $L\in\mathcal{S}_2(Z)$ intersects $H$ in a point. Conversely, if $p\in H$ is a general point, since $Z$ has one apparent double point, there is a unique line $L_p\in \mathcal{S}_2(Z)$ passing through $p$. Hence $\mathcal{S}_2(Z)$ is rational. 

Now, a general line $L\in \mathcal{S}_2(Z)$ intersects $X$ in a third point $x_{L}\in X$. Fix a general point $x_L \in X$ and consider the fiber $F_{x_L}$ of $\pi_2^Z$ over $x_L$. Since $X$ is a divisor $F_{x_L}$ has dimension zero, and since $\pi_2^Z$ is birational and $\secc_{2}(Z) = \mathbb{P}^{2n+1}$ is smooth $F_{x_L}$ consists of just one point. Therefore, the rational map
$$
\begin{array}{ccc}
\mathcal{S}_2(Z) & \dasharrow & X\\
 L & \mapsto & x_L
\end{array}
$$
is birational and hence $X$ is rational.
\end{proof}

\begin{Remark}
The twisted cubic $C\subset X^2$ is a variety with one apparent double point and hence Proposition \ref{oadp} provides another proof of the rationality of $X^2$. Following the proof of Proposition \ref{oadp} we can write down explicitly a birational parametrization which turns out to be induced by a linear system of plane sextics having points of multiplicity at least three in a pair of conjugate points and in another point defined over the base field, and passing through three pairs of conjugate points.

This linear system of sextics can be transformed into a linear system of cubics, as the one in Proposition \ref{BiRP2bis}, via the quadratic Cremona transformation centered at the three triple points. 
\end{Remark}
\end{say}

When the base field $k$ has characteristic $\ch(k) = 2$ the parametrizations of $X^2$ in Propositions \ref{BiRP2}, \ref{BiRP2bis} and in Section \ref{tc} degenerate to fibrations onto a line contained in $X^2$. So the maps in Propositions \ref{BiRP2}, \ref{BiRP2bis} needs to be slightly modified in order to obtain birational parametrizations of $X^2$ in characteristic two.

\begin{Proposition}\label{Fer_Char2}
Let $X^2\subset\mathbb{P}^3$ be the Fermat cubic hypersurface over a field $k$ of characteristic $\ch(k) = 2$. The map
$$
\begin{array}{cccc}
 \alpha: & \mathbb{P}^2 & \rightarrow & X^2\subset\mathbb{P}^3\\
  & [u_0:u_1:u_2] & \mapsto & [\alpha_0:\dots:\alpha_3]
\end{array}
$$
where
\begin{small}
$$
\begin{array}{ll}
\alpha_0 = & u_0^4 + u_0^2u_1^2 + u_1^4 + u_0u_2^3 + u_1u_2^3;\\ 
\alpha_1 = & u_0^4 + u_0^2u_1^2 + u_1^4 + u_0u_2^3;\\ 
\alpha_2 = & u_0^3u_2 + u_0^2u_1u_2 + u_0u_1^2u_2 + u_2^4;\\ 
\alpha_3 = & u_0^3u_2 + u_1^3u_2 + u_2^4;
\end{array} 
$$
\end{small}
yields a birational parametrization of $X^2$. Furthermore, the quadratic Cremona transformation
$$
\begin{array}{cccc}
 cr: & \mathbb{P}^2_{(u_0,u_1,u_2)} & \rightarrow & \mathbb{P}^2_{(v_0,v_1,v_2)}\\
  & [u_0:u_1:u_2] & \mapsto & [u_0^2 + u_0u_1 + u_1^2 + u_2^2
:u_0u_2 + u_2^2:u_1u_2]
\end{array}
$$
provides the following alternate birational parametrization
$$
\begin{array}{cccc}
 \beta = \alpha\circ cr^{-1}: & \mathbb{P}^2 & \rightarrow & X^2\subset\mathbb{P}^3\\
  & [v_0:v_1:v_2] & \mapsto & [\beta_0:\dots:\beta_3]
\end{array}
$$
where
\begin{small}
$$
\begin{array}{ll}
\beta_0 = & v_0^3 + v_1^3 + v_0^2v_2 + v_2^3;\\ 
\beta_1 = & v_0^3 + v_1^3 + v_0^2v_2 + v_1^2v_2 + v_1v_2^2;\\ 
\beta_2 = & v_0^2v_1 + v_0v_1^2 + v_1^3 + v_1^2v_2 + v_0v_2^2 + v_1v_2^2;\\ 
\beta_3 = & v_0^2v_1 + v_0v_1^2 + v_1^3 + v_0^2v_2 + v_2^3.
\end{array} 
$$
\end{small}
\end{Proposition}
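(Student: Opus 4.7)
The proof follows the same template as Propositions \ref{BiRP2} and \ref{BiRP2bis}, but must be carried out in characteristic two, where the quadratic extension $k(\xi)/k$ with $\xi^2=-3$ degenerates since $-3 = 1$. Geometrically, the pair of conjugate lines $L_\pm\subset\mathbb{P}^3$ that appeared in the base locus of $\alpha^{-1}$ in characteristic zero merges into a single line defined over $k$, which is precisely why the characteristic zero parametrization collapses to a fibration onto a line and needs to be modified.

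First I would verify by direct polynomial expansion that $\alpha_0^3+\alpha_1^3+\alpha_2^3+\alpha_3^3 \equiv 0$ in $k[u_0,u_1,u_2]$, so that $\alpha(\mathbb{P}^2)\subset X^2$. The identity is tractable by hand, using freely the characteristic-two simplifications $(a+b)^2 = a^2+b^2$ and $2ab = 0$, which cause the bulk of the cross terms arising in the expansion to cancel in pairs.

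Next, to show $\alpha$ is birational, I would exhibit an explicit rational inverse $X^2\dashrightarrow\mathbb{P}^2$ induced by three linearly independent quadrics $Q_0,Q_1,Q_2\in k[x_0,\dots,x_3]_2$, the characteristic-two replacement for the basis of $\mathcal{L}^2_{L_\pm,s}$ from Proposition \ref{BiRP2}. These are found by imposing the expected base-locus conditions (the single line that replaces $L_\pm$ together with the auxiliary point) and then verifying identities of the form $Q_j(\alpha_0,\dots,\alpha_3)\equiv c_j\, u_j\, P$ modulo the cubic $\sum x_i^3$, for a common polynomial factor $P$ and constants $c_j\in k$. For the second map $\beta$, one first checks that the three quadrics $u_0^2+u_0u_1+u_1^2+u_2^2,\; u_0u_2+u_2^2,\; u_1u_2$ defining $cr$ are linearly independent, so that $cr$ is a Cremona transformation with an explicitly computable inverse $cr^{-1}$. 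Birationality of $\beta = \alpha\circ cr^{-1}$ is then automatic from the birationality of $\alpha$ and of $cr$, and the claimed formulas for $\beta_0,\dots,\beta_3$ are obtained by substituting the three components of $cr^{-1}$ into $\alpha_0,\dots,\alpha_3$ and simplifying in characteristic two.

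The main obstacle is identifying the correct quadrics $Q_0,Q_1,Q_2$ defining $\alpha^{-1}$: the characteristic-two geometry of $X^2$ differs from the generic picture, so one must carefully track the specialization of the configuration $L_\pm\cup\{s\}$ used in Proposition \ref{BiRP2}. Once the right linear system is pinned down, every remaining check reduces to a mechanical polynomial identity in $k[u_0,u_1,u_2]$ or in $k[x_0,\dots,x_3]/(x_0^3+\cdots+x_3^3)$.
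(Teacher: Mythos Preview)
Your outline is correct and close to the paper's own argument, which also proceeds by writing down an explicit inverse and declaring the remaining checks to be routine. The one substantive divergence is the degree of the inverse: you propose to realise $\alpha^{-1}$ by three \emph{quadrics} $Q_j$ (the characteristic-two analogue of the system $\mathcal{L}^2_{L_\pm,s}$ from Proposition~\ref{BiRP2}), whereas the paper's proof of this proposition instead writes down explicit \emph{cubics} in $x_0,\dots,x_3$ for $\alpha^{-1}$. Your quadric approach is nonetheless valid --- in fact the map $\qoppa_{2}$ of Proposition~\ref{inv1} supplies exactly such quadrics, and one checks directly that the quartics $g_i$ defined there coincide with the $\alpha_i$ in characteristic two, so $\alpha^{-1}=\qoppa_2$ --- the paper simply postpones the quadric representation to the general framework in Section~\ref{ch2} and uses an ad hoc cubic inverse here. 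One small imprecision: linear independence of the three quadrics defining $cr$ is not by itself enough to conclude that $cr$ is a Cremona transformation; you must actually exhibit $cr^{-1}$ (as you indicate you would) and verify the composite is the identity.
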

\begin{proof}
The birational inverse of $\alpha$ is given by 
$$
\begin{array}{cccc}
 \alpha^{-1}: & X^2 & \rightarrow & \mathbb{P}^2\\
  & [x_0:\dots :x_3] & \mapsto & [\alpha^{-1}_0:\dots : \alpha^{-1}_2]
\end{array}
$$
where
\begin{small}
$$
\begin{array}{ll}
\alpha^{-1}_0 = & x_0x_1^2 + x_1^3 + x_2^3 + x_2^2x_3 + x_2x_3^2 + x_3^3;\\ 
\alpha^{-1}_1 = & x_0^2x_1 + x_1^3 + x_2^2x_3 + x_3^3;\\ 
\alpha^{-1}_2 = & x_0^2x_2 + x_1^2x_2 + x_0^2x_3 + x_0x_1x_3. 
\end{array} 
$$
\end{small}
The rest of the statement can be verified by straightforward computations.
\end{proof}

\section{First rationality construction: Grassmannians of lines}\label{gA}

Let $V$ be a $k$-vector space of dimension $n+1$ and $G(r+1,V)$ the Grassmannian of $k$-vector subspaces of $V$. Fix a basis $e_0,\dots,e_n$ of $V$. Let $W\subset V$ be a $k$-vector subspace of dimension $r+1$, $\{w_0,\dots,w_r\}$ a basis of $W$ and write $w_i = t_{1}^ie_0+\dots + a_n^ie_n$ for $i = 0,\dots,r$. Consider the matrices
$$
A_1 =\left(
\begin{array}{ccc}
t_{1}^0 & \dots & a_r^0\\ 
\vdots & \ddots & \vdots \\ 
t_{1}^r & \dots & a_r^r
\end{array}\right),
\quad
A_2 =\left(
\begin{array}{ccc}
a_{r+1}^0 & \dots & a_n^0\\ 
\vdots & \ddots & \vdots \\ 
a_{r+1}^r & \dots & a_n^r
\end{array}\right)
\quad
\text{and}
\quad
A = \left(
A_1, A_2
\right).
$$ 
On the open subset $\mathcal{U} = \{\det(A_1) \neq 0\}\subset G(r+1,V)$ we can invert $A_1$ and consider the matrix
$$
A_1^{-1}A = \left(I_{r+1,r+1}, A_1^{-1}A_2\right)
$$
where $I_{r+1,r+1}$ is the $(r+1)\times (r+1)$ identity matrix. The matrix $A_1^{-1}A_2$ is an $(r+1)\times (n-r)$ matrix with entries in $k$, and its entries yield an isomorphism between $k^{(r+1)(n-r)}$ and $\mathcal{U}$. Therefore, $G(r+1,V)$ has dimension $(r+1)(n-r)$ and is rational over $k$.

We will denote by $\mathbb{G}(r,n)\cong G(r+1,V)$ the Grassmannian parametrizing $r$-planes, defined over $k$, of $\mathbb{P}^n = \mathbb{P}(V)$.

\begin{Lemma}\label{ginj}
Let $X$ and $Y$ be irreducible varieties over a field $k$ of characteristic zero. Assume that $X$ is rational over $k$. If there exists a generically injective rational map
$$
f:X\dasharrow Y
$$
defined over $k$ and $\dim(X) = \dim(Y)$ then $Y$ is also rational over $k$.
\end{Lemma}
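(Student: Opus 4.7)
The plan is to argue at the level of function fields. Since $X$ is rational over $k$, we have $k(X)\cong k(x_1,\dots,x_n)$ with $n=\dim(X)=\dim(Y)$, so it suffices to show that $f$ induces an isomorphism $k(Y)\cong k(X)$ of $k$-algebras.

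First I would check that $f$ is dominant. Because $f$ is generically injective and defined on a dense open $U\subset X$, the image $f(U)$ has the same dimension as $U$, hence $\overline{f(X)}$ has dimension $\dim(X)=\dim(Y)$; since $Y$ is irreducible this forces $\overline{f(X)}=Y$. Dominance gives a well-defined injection of function fields $f^{\ast}:k(Y)\hookrightarrow k(X)$, and since the two function fields have the same transcendence degree over $k$, the extension $k(Y)\subset k(X)$ is finite and algebraic.

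Next I would translate generic injectivity into the statement that this extension has degree one. By generic flatness (or simply by generic finiteness in characteristic zero), there is a dense open $V\subset Y$ over which $f$ restricts to a finite étale map, and the degree of this étale cover equals $[k(X):k(Y)]$. Here is where characteristic zero is used: the extension is automatically separable, so the degree coincides with the cardinality of a generic geometric fibre. Since $f$ is generically injective over $k$, we may shrink so that the generic fibre is a single point, forcing $[k(X):k(Y)]=1$ and hence $f^{\ast}:k(Y)\to k(X)$ is an isomorphism.

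Combining the two steps, $k(Y)\cong k(X)\cong k(x_1,\dots,x_n)$, which is exactly the definition of $Y$ being rational over $k$. The only subtle point — and thus the main thing to be careful about — is the interpretation of ``generically injective'' for rational maps defined over a non-algebraically closed field: one should take it to mean injectivity of the induced map on a dense open subscheme, equivalently triviality of the residue field extension at the generic point, which is precisely what makes the characteristic zero hypothesis (eliminating purely inseparable complications) deliver $[k(X):k(Y)]=1$ cleanly.
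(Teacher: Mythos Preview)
Your argument is correct and follows essentially the same line as the paper's proof: both use that in characteristic zero the map is generically finite \'etale, and that generic injectivity then forces it to be an isomorphism on a dense open (equivalently, forces $[k(X):k(Y)]=1$). The paper phrases this geometrically by exhibiting an open $\mathcal{V}\subset Y$ on which $f$ restricts to an isomorphism, while you phrase it algebraically via function fields, but the content is the same; your explicit check of dominance and your remark on the meaning of ``generically injective'' over a non-closed field are useful clarifications the paper leaves implicit.
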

\begin{proof}
Since $\dim(X) = \dim(Y)$ and $\ch(k) = 0$ there exists an open subset $\mathcal{V}\subset Y$ such that $f_{|f^{-1}(\mathcal{V})}:f^{-1}(\mathcal{V})\rightarrow\mathcal{V}$ is finite and \'etale. Since $f$ is generically injective $f_{|f^{-1}(\mathcal{V})}:f^{-1}(\mathcal{V})\rightarrow\mathcal{V}$ is an isomorphism. Finally, since $X$ is rational $f^{-1}(\mathcal{V})$ is rational and hence $\mathcal{V}$ and $Y$ are also rational.
\end{proof}

\begin{Proposition}\label{p1}
Let $X\subset\mathbb{P}^{2n+1}$ be a smooth cubic hypersurface over a field $k$ of characteristic zero. Assume that $X$ contains two skew $n$-planes which are conjugate over $k$. Then $X$ is rational over $k$.
\end{Proposition}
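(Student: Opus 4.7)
The plan is to generalize to the conjugate case the classical third-intersection-point construction for cubics containing two skew $n$-planes defined over $k$. Let $\Pi,\overline{\Pi}\subset X$ be the two conjugate $n$-planes, defined over the quadratic extension $k(\xi)/k$ and interchanged by the nontrivial Galois element $\sigma$. Inside the Grassmannian $\mathbb{G}(1,2n+1)$ I consider
\[
\Sigma=\{L\in\mathbb{G}(1,2n+1)\mid L\cap\Pi\neq\emptyset \text{ and } L\cap\overline{\Pi}\neq\emptyset\}.
\]
Since $\sigma$ interchanges the two defining conditions, $\Sigma$ is $\sigma$-invariant, hence defined over $k$. Over $k(\xi)$ every $L\in\Sigma$ is uniquely of the form $\langle p,q\rangle$ with $p\in\Pi$ and $q\in\overline{\Pi}$, and the $\sigma$-action reads $(p,q)\mapsto(\sigma(q),\sigma(p))$; therefore a $k$-point of $\Sigma$ corresponds exactly to a line of the form $L_p:=\langle p,\sigma(p)\rangle$ for some $p\in\Pi(k(\xi))$.

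Next I check that $\Sigma$ is rational over $k$. Fixing an affine chart $\mathbb{A}^n_{k(\xi)}\subset\Pi$ and writing the coordinates of $p$ as $p_j=a_j+b_j\xi$ with $a_j,b_j\in k$, the $2n$ free parameters $a_1,\dots,a_n,b_1,\dots,b_n$ realize an open immersion $\mathbb{A}^{2n}_k\hookrightarrow\Sigma$ defined over $k$ (in abstract terms, $\Sigma=\Res_{k(\xi)/k}\mathbb{P}^n$). I then define $\varphi\colon\Sigma\dasharrow X$ by sending $L=L_p$ to its residual intersection point with $X$: since $\deg X=3$ and the length-$3$ subscheme $L\cap X$ is $\sigma$-invariant and contains the distinct conjugate pair $p,\sigma(p)$ (distinct because $\Pi\cap\overline{\Pi}=\emptyset$), the third intersection $\varphi(L)\in L\cap X$ is $\sigma$-fixed and hence a $k$-point of $X$.

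It remains to prove that $\varphi$ is birational. Since $\dim\Sigma=2n=\dim X$, by Lemma \ref{ginj} it is enough to show that $\varphi$ is generically injective. Because $\Pi$ and $\overline{\Pi}$ are skew $n$-planes in $\mathbb{P}^{2n+1}$ and therefore satisfy $\langle\Pi,\overline{\Pi}\rangle=\mathbb{P}^{2n+1}$, a general point of $\mathbb{P}^{2n+1}$ (in particular of $X$) lies on a \emph{unique} line meeting both $\Pi$ and $\overline{\Pi}$; this one-line Schubert computation is immediate in complementary coordinate systems for $\Pi$ and $\overline{\Pi}$. Thus $\varphi$ is generically injective, and Lemma \ref{ginj} combined with the rationality of $\Sigma$ yields the rationality of $X$ over $k$.

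The most delicate point is the Galois descent: one must make sure that the variety $\Sigma$, the parametrization $p\mapsto L_p$ and the third-intersection assignment are all genuinely defined over $k$ rather than only over $k(\xi)$. Once this bookkeeping is in place, the rationality of $\Sigma$ and the generic injectivity of $\varphi$ are essentially formal.
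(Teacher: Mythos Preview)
Your proof is correct and rests on the same geometric idea as the paper's: associate to a line meeting both conjugate $n$-planes its residual intersection with $X$. The difference is in how the family of such lines is parametrized. The paper fixes an auxiliary $(n-1)$-plane $\Lambda$ over $k$ disjoint from $H_1\cup H_2$ and parametrizes via the $(n+1)$-planes through $\Lambda$, which form the Grassmannian $\mathbb{G}(1,n+1)$ --- this is manifestly rational over $k$ with no descent argument needed. You instead work directly with the Schubert variety $\Sigma$ of lines meeting both planes and identify it as $\Res_{k(\xi)/k}\Pi$ to obtain its $k$-rationality. Your route is in fact exactly the one the paper develops as its \emph{second} rationality construction in Section~\ref{res_scal}, so you have anticipated that alternative argument. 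The Grassmannian approach buys an immediate, descent-free rationality of the parameter space, while your restriction-of-scalars approach makes the Galois bookkeeping more intrinsic and leads more directly to the explicit coordinate formulas the paper derives later.
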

\begin{proof}
Let $H_1,H_2\subset X$ be two skew and conjugate $n$-planes, and $\Lambda\subset\mathbb{P}^{2n+1}$ an $(n-1)$-plane, defined over $k$, such that $\Lambda\cap (H_1\cup H_2) = \emptyset$.

A general $(n+1)$-plane $H\subset\mathbb{P}^{2n+1}$, defined over $k$, containing $\Lambda$ intersects $H_1$ in a point $p_1$ and $H_2$ in a point $p_2$. Since $H_1,H_2$ are conjugate $p_1,p_2$ are also conjugate. Hence, the line $L_H = \left\langle p_1,p_2\right\rangle$ is defined over $k$ and intersects $X$ in a third point $x_H \in X \cap L_H$ which is defined over $k$ as well.

The $(n+1)$-planes, defined over $k$, of $\mathbb{P}^{2n+1}$ containing $\Lambda$ are parametrized by the Grassmannian $\mathbb{G}(n-1,n+1) \cong \mathbb{G}(1,n+1)$. Hence, we get a rational map
$$
\begin{array}{cccc}
 \phi: & \mathbb{G}(1,n+1) & \dasharrow & X\\
  & H & \mapsto & x_H
\end{array}
$$
which is defined over $k$. Fix a general point $q \in \phi(\mathbb{G}(1,n+1))$, and assume that $q = \phi(H) = \phi(H')$. Since $H_1,H_2$ are skew we have that $L_H = L_{H'}$. Now, $\left\langle L_H,\Lambda\right\rangle\subset H_1\cap H_2$. Furthermore, since we can assume that $H\in \mathbb{G}(n-1,n+1)$ is general we have that $L_H\cap \Lambda = \emptyset$ and hence $H = H'$. Therefore, $\phi$ is generically injective.

Finally, to conclude it is enough to recall that $\mathbb{G}(1,n+1)$ has dimension $2n$ and is rational over $k$, and to apply Lemma \ref{ginj}.
\end{proof}

\begin{Corollary}\label{c1}
The Fermat cubic hypersurface 
$$
X^{2n} = \{x_0^3 +\dots +x_{2n+1}^3 = 0\}\subset\mathbb{P}^{2n+1}
$$
is rational over any field $k$ of characteristic zero for all $n\geq 1$.
\end{Corollary}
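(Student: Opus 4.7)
The plan is to invoke Proposition \ref{p1} with explicit choices of skew conjugate planes. Since the Fermat cubic $X^{2n}$ is smooth in any characteristic different from three, the only nontrivial input is to exhibit a pair $H_+, H_-$ of skew $n$-planes contained in $X^{2n}$ and Galois conjugate over a quadratic extension of $k$.

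I would fix a primitive cube root of unity $\omega$ in an algebraic closure $\overline{k}$, so that $\omega^2 + \omega + 1 = 0$ and $\omega^3 = 1$. Motivated by the factorization $a^3 + b^3 = (a+b)(a+\omega b)(a+\omega^2 b)$, I pair the homogeneous coordinates of $\mathbb{P}^{2n+1}$ as $(x_0,x_1),(x_2,x_3),\dots,(x_{2n},x_{2n+1})$ and set
$$
H_+ = \{x_{2i+1} + \omega x_{2i} = 0 : i = 0, \dots, n\}, \qquad
H_- = \{x_{2i+1} + \omega^2 x_{2i} = 0 : i = 0, \dots, n\}.
$$
Each is cut out by $n+1$ independent linear forms, hence is an $n$-plane defined over $k(\omega)$, and the nontrivial element of $\Gal(k(\omega)/k)$ (or the identity, if $\omega$ already lies in $k$) exchanges $H_+$ and $H_-$.

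Next I would check the two remaining conditions. Containment in $X^{2n}$ follows summand by summand: on $H_+$ one has $x_{2i+1}^3 = (-\omega x_{2i})^3 = -\omega^3 x_{2i}^3 = -x_{2i}^3$, so $x_{2i}^3 + x_{2i+1}^3 = 0$ for every $i$ and summing yields the Fermat equation; the identical computation with $\omega^2$ gives $H_-\subset X^{2n}$. For skewness, at a hypothetical common point one would have $x_{2i+1} + \omega x_{2i} = 0 = x_{2i+1} + \omega^2 x_{2i}$, so $(\omega - \omega^2)x_{2i} = 0$ for each $i$; since $\ch(k) \neq 3$ we have $\omega \neq \omega^2$, forcing $x_{2i} = x_{2i+1} = 0$ for all $i$, which is impossible in $\mathbb{P}^{2n+1}$. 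Hence $H_+ \cap H_- = \emptyset$.

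Having verified all hypotheses, Proposition \ref{p1} applies and delivers rationality of $X^{2n}$ over $k$. The only step where the characteristic hypothesis genuinely enters is the skewness verification, which fails precisely when $\omega = \omega^2$, i.e.\ in characteristic three; everywhere else the construction is entirely formal and the main obstacle is simply guessing the correct pairing of coordinates that makes the two planes both skew and Galois conjugate.
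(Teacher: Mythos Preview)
Your proof is correct and follows essentially the same approach as the paper: the paper writes the two conjugate $n$-planes using $a_{\pm}=\frac{1\pm\xi}{2}$ with $\xi^2=-3$, which are precisely $-\omega,-\omega^2$ for $\omega$ a primitive cube root of unity, so your $H_{\pm}$ coincide with the paper's after a harmless relabeling. You are in fact a bit more explicit than the paper in verifying containment in $X^{2n}$ and skewness; the only minor imprecision is the parenthetical about the identity ``exchanging'' $H_+$ and $H_-$ when $\omega\in k$, but in that case both planes are already $k$-rational and Proposition~\ref{p1} (or the even more classical two-skew-planes argument) applies directly.
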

\begin{proof}
Let $k(\xi)$ be a quadratic extension of $k$ with $\xi^2 = -3$, and set 
$$
a_{+} = \frac{1+\xi}{2}\quad \text{and}\quad a_{-} = \frac{1-\xi}{2}.
$$
Then 
$$
H_{+} = \{x_i - a_{+}x_{i+1} = 0,\: \text{for} \: i \in [0,2n]\: \text{even}\}\quad \text{and}\quad H_{-} = \{x_i - a_{-}x_{i+1} = 0,\: \text{for} \: i \in [0,2n]\: \text{even}\}
$$
are a pair of skew conjugate $n$-planes in $X^{2n}$. Hence, the claim follows from Proposition \ref{p1}.
\end{proof}

\section{Second rationality construction: restriction of scalars}\label{res_scal}
Let $L$ be a finite extension of a field $k$ and 
$$
X = \Spec\left(\frac{L[x_1,\dots,x_n]}{(f_1,\dots,f_m)}\right)
$$ 
an affine variety over $L$. Fix a basis $e_1,\dots,e_s$ of $L$ over $k$, introduce new variables $y_{i,j}$ for $i = 1,\dots,n$, $j = 1,\dots,s$, and write
$$
x_i = \sum_{j=1}^s y_{i,j}e_i
$$
for all $i = 1,\dots,n$ and 
$$
f_r(x_1,\dots,x_n) = F_{r,1}e_1 + \dots + F_{r,s}e_s
$$
with $F_{r,i} \in k[y_{1,1},\dots,y_{1,s},\dots,y_{n,1},\dots,y_{n,s}]$. The affine variety over $k$
$$
\Res_{L/k}(X) = \Spec\left(\frac{k[y_{i,j}]}{(F_{r,i})}\right)
$$
is the restriction of scalars of $X$.

\begin{proof}[Proof of Proposition \ref{p1} via restriction of scalars]
Up to a change of variables we may assume that the two skew and conjugate $n$-planes are defined by
$$
H_{+} = \{x_i - a_{+}x_{i+1} = 0,\: \text{for} \: i \in [0,2n]\: \text{even}\}\quad \text{and}\quad H_{-} = \{x_i - a_{-}x_{i+1} = 0,\: \text{for} \: i \in [0,2n]\: \text{even}\}
$$
where
$$
a_{+} = \frac{1+\xi}{2}\quad \text{and}\quad a_{-} = \frac{1-\xi}{2}.
$$
and $k(\xi)$ is a quadratic extension of $k$ with $\xi^2 = -3$. Consider the affine chart $\{x_{2n+1}\neq 0\}$, and set $x_i = y_{i,1}+\xi y_{i,2}$ for $i = 0,\dots,2n$, and $x_{2n+1} = 1$. Then
$$
x_i-a_{+}x_{i+1} = \left(y_{i,1}-\frac{1}{2}y_{i+1,1}+\frac{3}{2}y_{i+1,2}\right) + \xi \left(y_{i,2}-\frac{1}{2}y_{i+1,1}-\frac{1}{2}y_{i+1,2}\right) = 0
$$ 
yields
$$
\left\lbrace\begin{array}{ll}
y_{i,1} =  & \frac{1}{2}y_{i+1,1} - \frac{3}{2}y_{i+1,2};\\ 
y_{i,2} = & \frac{1}{2}y_{i+1,1} + \frac{1}{2}y_{i+1,2};
\end{array}\right. \quad \text{for} \: i \in [0,2n-2] \: \text{even};
$$
and $x_{2n}-a_{+} = 0$ yields $y_{2n,1} = \frac{1}{2}$, $y_{2n,2} = \frac{1}{2}$. Similarly
$$
x_i-a_{-}x_{i+1} = \left(y_{i,1}-\frac{1}{2}y_{i+1,1}+\frac{3}{2}y_{i+1,2}\right) + \xi \left(y_{i,2}-\frac{1}{2}y_{i+1,1}-\frac{1}{2}y_{i+1,2}\right) = 0
$$ 
yields
$$
\left\lbrace\begin{array}{ll}
y_{i,1} =  & \frac{1}{2}y_{i+1,1} + \frac{3}{2}y_{i+1,2};\\ 
y_{i,2} = & -\frac{1}{2}y_{i+1,1} + \frac{1}{2}y_{i+1,2};
\end{array}\right. \quad \text{for} \: i \in [0,2n-2] \: \text{even};
$$
and $x_{2n}-a_{-} = 0$ yields $y_{2n,1} = \frac{1}{2}$, $y_{2n,2} = -\frac{1}{2}$. Hence, we get two points $x^{a_{+}}\in H_{+}$, $x^{a_{-}}\in H_{-}$ with coordinates
$$
x^{a_{+}}_i = \frac{1}{2}y_{i+1,1} - \frac{3}{2}y_{i+1,2} + \xi\left(\frac{1}{2}y_{i+1,1} + \frac{1}{2}y_{i+1,2}\right)
$$
for $i = 0,\dots, 2n-2$ even, 
$$
x^{a_{+}}_j = y_{j,1} + \xi y_{j,2}
$$
for $j = 1,\dots, 2n-1$ odd, and $x^{a_{+}}_{2n} = \frac{1}{2}+\frac{1}{2}\xi$;
$$
x^{a_{-}}_i = \frac{1}{2}z_{i+1,1} + \frac{3}{2}z_{i+1,2} + \xi\left(-\frac{1}{2}z_{i+1,1} + \frac{1}{2}z_{i+1,2}\right)
$$
for $i = 0,\dots, 2n-2$ even, 
$$
x^{a_{-}}_j = z_{j,1} + \xi z_{j,2}
$$
for $j = 1,\dots, 2n-1$ odd, and $x^{a_{-}}_{2n} = \frac{1}{2}-\frac{1}{2}\xi$, where we denoted by $y_{i,j}$ the coordinates of a point in $H_{+}$ and by $z_{i,j}$ the coordinates of a point in $H_{-}$. Then $x^{a_{+}}$ and $x^{a_{-}}$ are conjugate if and only if
\stepcounter{thm}
\begin{equation}\label{eqsub}
\left\lbrace\begin{array}{ll}
y_{i+1,1} - z_{i+1,1}  = & 0;\\ 
y_{i+1,2} + z_{i+1,2}  = & 0;
\end{array}\right. 
\end{equation}
for $i =0,\dots,2n-2$ even. Therefore, in the affine space $H_{+}\times H_{-} \cong \mathbb{A}^{4n}$ the pair of conjugate points form an affine subspace $H\cong\mathbb{A}^{2n}$ cut out by the equations in (\ref{eqsub}). Now, set 
$$y_{i,1} = u_i,y_{i,2} = u_{i+1}.$$
Then (\ref{eqsub}) yields $z_{i,1} = u_i,z_{i,2} = -u_{i+1}$ and a point $u = (u_1,\dots,u_{2n})\in H\cong\mathbb{A}_{(u_1,\dots,u_{2n})}^{2n}$ determines the pair of points $(x^{a_{+}}(u),x^{a_{-}}(u))\in H_{+}\times H_{-}$ with 
$$
x^{a_{+}}(u)_i = \frac{1}{2}u_{i+1} - \frac{3}{2}u_{i+2} + \xi\left(\frac{1}{2}u_{i+1} + \frac{1}{2}u_{i+2}\right)
$$
for $i = 0,\dots, 2n-2$ even, 
$$
x^{a_{+}}(u)_j = u_{j} + \xi u_{j+1}
$$
for $j = 1,\dots, 2n-1$ odd, and $x^{a_{+}}_{2n} = \frac{1}{2}+\frac{1}{2}\xi$;
$$
x^{a_{-}}(u)_i = \frac{1}{2}u_{i+1} - \frac{3}{2}u_{i+2} + \xi\left(-\frac{1}{2}u_{i+1} - \frac{1}{2}u_{i+2}\right)
$$
for $i = 0,\dots, 2n-2$ even, 
$$
x^{a_{-}}(u)_j = u_{j} - \xi u_{j+1}
$$
for $j = 1,\dots, 2n-1$ odd, and $x^{a_{-}}(u)_{2n} = \frac{1}{2}-\frac{1}{2}\xi$.

Since $x^{a_{+}}(u),x^{a_{-}}(u)$ are conjugate the line $\left\langle x^{a_{+}}(u),x^{a_{-}}(u)\right\rangle$ intersects $X$ in a third point $(X \cap \left\langle x^{a_{+}}(u),x^{a_{-}}(u)\right\rangle)\setminus \{x^{a_{+}}(u),x^{a_{-}}(u)\}$ defined over $k$.

Finally, arguing as in the last part of the proof of Proposition \ref{p1} we get that the rational map 
\stepcounter{thm}
\begin{equation}\label{parRS}
\begin{array}{cccc}
 \varphi: & \mathbb{A}^{2n} & \dasharrow & X\\
   & u & \mapsto & (X \cap \left\langle x^{a_{+}}(u),x^{a_{-}}(u)\right\rangle)\setminus \{x^{a_{+}}(u),x^{a_{-}}(u)\}.
\end{array}
\end{equation}
is birational.
\end{proof}

Next, we work out the birational parametrization $\varphi:\mathbb{A}^{2n}\dasharrow X^{2n}$ in (\ref{parRS}) when $X = X^{2n}$ is the Fermat cubic. Set 
$$L = \left\langle x^{a_{+}}(u),x^{a_{-}}(u)\right\rangle$$
and write parametrically 
$$
L_i = x^{a_{+}}(u)_i+\lambda(x^{a_{-}}(u)_i-x^{a_{+}}(u)_i)
$$
with $\lambda\in k$, $i = 0,\dots,2n$. Then
$$
L_i = \frac{u_{i+1}-3u_{i+2}+\xi(u_{i+1}+u_{i+2})}{2}-\lambda\xi(u_{i+1}+u_{i+2})
$$
for $i = 0,\dots,2n-2$ even,
$$
L_j = u_j+\xi u_{j+1}-2 \lambda\xi u_{j+1}
$$
for $j = 1,\dots,2n-1$ odd, and $L_{2n} = \frac{1+\xi}{2}-\lambda\xi$. Now, set
\stepcounter{thm}
\begin{equation}\label{AB}
\begin{array}{ll}
A = & \sum_{i=0}^{n-1}(u_{2i+1}^3+3u_{2i+1}^2u_{2i+2}+3u_{2i+1}u_{2i+2}^2+9u_{2i+2}^3)+1;\\ 
B = & \sum_{i=0}^{n-1}(u_{2i+1}^3-u_{2i+1}^2u_{2i+2}+3u_{2i+1}u_{2i+2}^2-3u_{2i+2}^3)+1.
\end{array}  
\end{equation}
Substituting $x_i = L_i$ for $i = 0,\dots,2n$ in $F = x_0^3+\dots + x_{2n}^3 + 1$ we get the polynomial
$$
F_u(\lambda) = 3\xi A \lambda^3 - \frac{9}{2}(\xi A + B)\lambda^2 + \frac{3}{2}(\xi A+3B)\lambda.
$$
Therefore, $L\subset X^{2n}$ if and only if $A = B = 0$. Now, assume that $A\neq 0$. Then the polynomial $F_u(\lambda)$ has the following three roots:
$$
\lambda_1 = 0, \lambda_2 = 1, \lambda_3 = \frac{A-\xi B}{2A}.
$$ 
Finally, substituting $\lambda = \lambda_3$ in the $L_i$ we get the following coordinates for the point $(X^{2n} \cap \left\langle x^{a_{+}}(u),x^{a_{-}}(u)\right\rangle)\setminus \{x^{a_{+}}(u),x^{a_{-}}(u)\}$:
$$
\varphi_i = \frac{(u_{i+1}-3u_{i+2})A - 3(u_{i+1} + u_{i+2})B}{2A}
$$
for $i = 0,\dots,2n-2$ even,
$$
\varphi_j = \frac{u_{j}A - 3u_{j+1}B}{A}
$$
for $j = 1,\dots,2n-1$ odd, and $\varphi_{2n} = \frac{A-3B}{2A}$.

We now extend the map $\varphi:\mathbb{A}^{2n}\dasharrow X^{2n}$ to a map $\overline{\varphi}:\mathbb{P}^{2n}\dasharrow X^{2n}\subset\mathbb{P}^{2n+1}$ by introducing a new variable $u_0$ and homogeneizing the polynomials $A,B$ in (\ref{AB}):
\stepcounter{thm}
\begin{equation}\label{ABu0}
\begin{array}{ll}
\overline{A} = & u_0^3 + \sum_{i=0}^{n-1}u_{2i+1}^3+3u_{2i+1}^2u_{2i+2}+3u_{2i+1}u_{2i+2}^2+9u_{2i+2}^3;\\ 
\overline{B} = & u_0^3 + \sum_{i=0}^{n-1}u_{2i+1}^3-u_{2i+1}^2u_{2i+2}+3u_{2i+1}u_{2i+2}^2-3u_{2i+2}^3.
\end{array}  
\end{equation}

\begin{say}\label{comp_m}
Homogeneizing the $\varphi_i$ we get  
$$
\overline{\varphi}_i = \frac{(u_{i+1}-3u_{i+2})\overline{A} - 3(u_{i+1} + u_{i+2})\overline{B}}{2}
$$
for $i = 0,\dots,2n-2$ even,
$$
\overline{\varphi}_j = u_{j}\overline{A} - 3u_{j+1}\overline{B}
$$
for $j = 1,\dots,2n-1$ odd, $\overline{\varphi}_{2n} = \frac{u_0(\overline{A}-3\overline{B})}{2}$, $\overline{\varphi}_{2n+1} = u_0\overline{A}$.
\end{say}

\begin{Lemma}\label{Irr}
If $n\geq 2$ then the complete intersection $Y^{2n-2} = \{\overline{A} = \overline{B} = 0\}\subset\mathbb{P}^{2n}$ defined by the polynomials in (\ref{ABu0}) is irreducible.  
\end{Lemma}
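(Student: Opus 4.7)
The plan is to view $Y$ as a Cartier divisor on the smooth cubic $X := \{\overline A = 0\}$ and to rule out any reducible decomposition by proving that every member of the pencil $\langle \overline A, \overline B\rangle$ is irreducible.

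First, I would observe that the linear change of coordinates $a_i = u_{2i+1} + u_{2i+2}$, $b_i = 2 u_{2i+2}$ identifies $\overline A$ with the standard Fermat cubic $u_0^3 + \sum_{i=0}^{n-1}(a_i^3 + b_i^3)$, hence $X$ is smooth of dimension $2n-1 \geq 3$ for $n \geq 2$, so $\Pic(X) = \mathbb{Z} \cdot H_X$ by Grothendieck-Lefschetz. Next I would view $Y = X \cap \{\overline B = 0\}$ as a Cartier divisor of class $3H_X$ on $X$. A hypothetical decomposition $Y = \sum m_i D_i$ into at least two distinct irreducible components with $D_i \sim d_i H_X$, $d_i \geq 1$, and $\sum m_i d_i = 3$ necessarily contains a component with $d_i = 1$, i.e.\ a hyperplane section $X \cap \{L = 0\}$. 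Such a component forces $\overline B - c\overline A = L \cdot P$ for some $c \in k$, some linear form $L$ and some quadratic $P$; in particular $\overline B - c\overline A$ would be reducible. Therefore it suffices to prove that $\overline B - c\overline A$ is irreducible for every $c \in k$.

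The main technical step will be a careful factorization in each pair of variables: one finds $(1-c) w^3 - (1+3c) w^2 z + 3(1-c) w z^2 - 3(1+3c) z^3 = [(1-c) w - (1+3c) z](w^2 + 3 z^2)$ with $w = u_{2i+1}$, $z = u_{2i+2}$. So for $c \neq 1$ the cubic $\overline B - c\overline A$ is a non-zero scalar multiple of
$$
F_\gamma := u_0^3 + \sum_{i=0}^{n-1}(u_{2i+1} - \gamma u_{2i+2})(u_{2i+1}^2 + 3u_{2i+2}^2),
$$
with $\gamma = (1+3c)/(1-c)$, while for $c = 1$ it degenerates to $-4\sum_i u_{2i+2}(u_{2i+1}^2 + 3u_{2i+2}^2)$. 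I will then compute the resultant of $\partial F_\gamma/\partial u_{2i+1}$ and $\partial F_\gamma/\partial u_{2i+2}$, viewed as binary quadratics in $(u_{2i+1}, u_{2i+2})$: it comes out to a non-zero multiple of $(\gamma^2 + 3)^2$. For $\gamma^2 \neq -3$ this shows $F_\gamma$ is smooth, hence irreducible. For the sporadic case $\gamma^2 = -3$ the common roots in each pair are $u_{2i+1} = \gamma u_{2i+2}$, so the singular locus is a union of linear subspaces of dimension at most $n - 1$ inside the hypersurface of dimension $2n - 1$; for $n \geq 2$ this is codimension at least two, and combining Cohen-Macaulayness of the hypersurface with $R_1$ yields normality, hence irreducibility. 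Finally, $\overline B - \overline A$ is a cone with apex $[1:0:\dots:0]$ over the cubic $\sum u_{2i+2}(u_{2i+1}^2 + 3u_{2i+2}^2) = 0$ in $\mathbb{P}^{2n-1}$, which a direct inspection of partials shows to be smooth, hence irreducible for $n \geq 2$.

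The hard part will be the sporadic value $\gamma^2 = -3$: the pencil genuinely has singular members there, so smoothness alone is unavailable, and one has to pin down the singular locus as a union of Galois-conjugate linear spaces and bound its dimension. The hypothesis $n \geq 2$ enters exactly at this point, ensuring that the singular locus has codimension at least two in $F_\gamma$.
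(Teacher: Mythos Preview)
Your argument is correct (in characteristic $\neq 2,3$, which is the ambient assumption of this section; the paper handles characteristic two with a different $Y^{2n-2}$ later), but the route is genuinely different from the paper's. The paper proves the lemma by projecting $Y^{2n-2}$ from the point $[0:\dots:0:1]$: after passing to the equivalent generators $\widetilde A=(\overline A+3\overline B)/4$ and $\widetilde B=(\overline A-\overline B)/4$, one eliminates $u_{2n}$ explicitly and writes down a degree-nine hypersurface $Z^{2n-2}\subset\mathbb{P}^{2n-1}$; the projection $Y^{2n-2}\to Z^{2n-2}$ is then checked to be birational, and irreducibility is read off from the explicit equation of $Z^{2n-2}$. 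Your approach instead observes that $\{\overline A=0\}$ is a smooth Fermat cubic of dimension $2n-1\geq 3$, invokes Grothendieck--Lefschetz to force any putative component of $Y^{2n-2}$ to be a hyperplane section, and then kills this by showing every member of the pencil $\overline B-c\overline A$ is an irreducible cubic (smooth away from the special parameter $\gamma^2=-3$, and normal there by a codimension count plus Serre's criterion). Your method is cleaner conceptually and in fact yields geometric irreducibility, while the paper's is entirely hands-on and needs no input from $\Pic$ or Serre; on the other hand the paper's projection argument does not need $\{\overline A=0\}$ to be smooth, so it would survive in characteristic two with minor changes, whereas your coordinate change $b_i=2u_{2i+2}$ and the ensuing smoothness of $\overline A$ both fail there.
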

\begin{proof}
Let $\pi:\mathbb{P}^{2n}_{(u_0,\dots,u_{2n})}\dasharrow\mathbb{P}^{2n-1}_{(v_0,\dots,v_{2n-1})}$ be the projection from $[0:\dots:0:1]$. Consider the polynomials
$$
\begin{array}{lll}
\widetilde{A} & = \frac{\overline{A}+3\overline{B}}{4} & = u_0^3 + \sum_{i=0}^{n-1}u_{2i+1}^3+3u_{2i+1}u_{2i+2}^2; \\ 
\widetilde{B} & = \frac{\overline{A}-\overline{B}}{4}  & = \sum_{i=0}^{n-1}u_{2i+1}^2u_{2i+2}+3u_{2i+2}^3.
\end{array} 
$$
Then $\overline{A} = 0$ yields $u_0^3 + \sum_{i=0}^{n-2}(u_{2i+1}^3+3u_{2i+1}u_{2i+2}^2) + u_{2n-1}^3 = -3u_{2n-1}u_{2n}^2$ and hence
\stepcounter{thm}
\begin{equation}\label{sq}
u_{2n}^2 = - \frac{u_0^3 + \sum_{i=0}^{n-2}(u_{2i+1}^3+3u_{2i+1}u_{2i+2}^2) + u_{2n-1}^3}{3u_{2n-1}}.
\end{equation}
From $\overline{B} = 0$ we get
$$
\sum_{i=0}^{n-2}u_{2i+1}^2u_{2i+2}+3u_{2i+2}^3 = u_{2n-1}^2u_{2n}+3u_{2n}^3
$$
and hence
$$
\left(\sum_{i=0}^{n-2}u_{2i+1}^2u_{2i+2}+3u_{2i+2}^3\right)^2 = u_{2n-1}^4u_{2n}^2+6u_{2n-1}^2u_{2n}^4+9u_{2n}^6.
$$
Substituting the expression for $u_{2n}^2$ in (\ref{sq}) in this last equation we get
$$
3u_{2n-1}^3\left(\sum_{i=0}^{n-2}u_{2i+1}^2u_{2i+2}+3u_{2i+2}^3\right)^2 = -u_{2n-1}^6P+2u_{2n-1}^3P^2-P^3
$$
where $P = u_0^3+u_{2n-1}^3+\sum_{i=0}^{n-2}3u_{2i+1}u_{2i+2}^2+u_{2i+2}^3$. Therefore the projection $Z^{2n-2} = \pi(Y^{2n-2})$ is the hypersurface 
$$
Z^{2n-2} = \left\lbrace \overline{P}^3 -2v_{2n-1}^3\overline{P}^2+v_{2n-1}^6\overline{P} + 3v_{2n-1}^3\left(\sum_{i=0}^{n-2}v_{2i+1}^2v_{2i+2}+3v_{2i+2}^3\right)^2=0\right\rbrace\subset \mathbb{P}^{2n-1}_{(v_0,\dots,v_{2n-1})}
$$
where $\overline{P} = P(v_0,\dots,v_{2n-1})$. Since $Z^{2n-2}$ is irreducible to conclude it is enough to note that $\pi_{|Y^{2n-2}}:Y^{2n-2}\rightarrow Z^{2n-2}$ is birational.  
\end{proof}

\begin{Remark}\label{noIrrn1}
When $n = 1$ then $Y^{0}$ has the following three irreducible components:
$$
Y^0_1 = \{u_0 = u_1^2+3u_2^2 = 0\},\: Y^0_2 = \{u_2 = u_0^2-u_0u_1+u_1^2 = 0\},\: Y^0_3 = \{u_2 = u_0+u_1 = 0\}.
$$
Note that $Y^0_1$ and $Y^0_2$ consists of two pairs of points defined over a quadratic extension of the base field, while $Y^0_3$ is a point defined over the base field. 
\end{Remark}

\begin{Lemma}\label{lsing}
Consider the scheme $Z^{n-1}\subset\mathbb{P}^{2n}$ defined by the following equations:
$$
Z^{n-1} = \left\lbrace\begin{array}{ll}
u_0 & = 0;\\
u_{2i+1}^2+3u_{2i+2}^2 & = 0;\\
u_{2s+1}u_{2t+3}+3u_{2s+2}u_{2t+4} & = 0;\\
u_{2s+2}u_{2t+3}-u_{2s+1}u_{2t+4} & = 0;
\end{array}\right.
$$
for $i = 0,\dots,n-1$; $s,t = 0,\dots,n-2$ with $t\geq s$. Then $Z^{n-1}$ is smooth, 
$$
\dim(Z^{n-1}) = n-1,\quad \deg(Z^{n-1}) = 2
$$
and $Z^{n-1}$ splits as the disjoint union of two conjugate $(n-1)$-planes if and only if $-3$ is a square in $k$. Furthermore,
$$
\Sing(Y^{2n-2}) = Z^{n-1}
$$
and if $\ch(k) = 0$ then $Y^{2n-2}$ has multiplicity two in a general point of any of the geometric components of $Z^{n-1}$. 
\end{Lemma}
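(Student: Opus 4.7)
The plan is to first split $Z^{n-1}$ over $k(\xi)$, $\xi^2=-3$, into two conjugate linear $(n-1)$-planes, then use the factorizations
$$\overline{A} = u_0^3 + \sum_i (u_{2i+1}+3u_{2i+2})g_i,\quad \overline{B} = u_0^3 + \sum_i (u_{2i+1}-u_{2i+2})g_i,\quad g_i := u_{2i+1}^2+3u_{2i+2}^2,$$
to pin down $\Sing(Y^{2n-2})$ via a Jacobian computation, and finally extract the multiplicity along $Z^{n-1}$ by exhibiting a suitable element of the ideal $(\overline{A},\overline{B})$ whose leading form is quadratic. The hardest step will be the last one, where I must produce a linear combination of $\overline{A}$ and $\overline{B}$ whose linear initial part cancels at a general point of the singular locus.

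\emph{Splitting.} Set $\Pi_\pm := \{u_0 = 0,\ u_{2i+1}\pm\xi u_{2i+2} = 0,\ i = 0,\ldots,n-1\}$; these are two conjugate linear $(n-1)$-planes, smooth, disjoint (a common point would force every coordinate to vanish in $\mathbb{P}^{2n}$). A direct substitution shows each equation of $Z^{n-1}$ is satisfied on $\Pi_\pm$; for instance on $\Pi_+$ we have $u_{2i+1}u_{2j+1}+3u_{2i+2}u_{2j+2}=(\xi^2+3)u_{2i+2}u_{2j+2}=0$ and $u_{2i+2}u_{2j+1}-u_{2i+1}u_{2j+2}=-\xi u_{2i+2}u_{2j+2}+\xi u_{2i+2}u_{2j+2}=0$. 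Conversely, at any $p\in Z^{n-1}$ the vanishing $g_i=0$ forces, for each $i$, either $(u_{2i+1},u_{2i+2})=(0,0)$ or $u_{2i+1}=\epsilon_i\xi u_{2i+2}$ with $\epsilon_i\in\{\pm 1\}$; two nonzero pairs with opposite signs would reduce $u_{2i+1}u_{2j+1}+3u_{2i+2}u_{2j+2}=0$ to $6u_{2i+2}u_{2j+2}=0$, a contradiction. Hence $Z^{n-1}=\Pi_+\sqcup\Pi_-$, and the claims on smoothness, dimension, degree, and the splitting criterion $-3\in(k^\times)^2$ follow immediately.

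\emph{Singular locus.} The inclusion $Z^{n-1}\subset Y^{2n-2}$ is clear. The partial derivatives read
\begin{equation*}
\begin{array}{ll}
\partial_{u_{2i+1}}\overline{A}=3(u_{2i+1}+u_{2i+2})^2, & \partial_{u_{2i+2}}\overline{A}=3u_{2i+1}^2+6u_{2i+1}u_{2i+2}+27u_{2i+2}^2,\\
\partial_{u_{2i+1}}\overline{B}=3u_{2i+1}^2-2u_{2i+1}u_{2i+2}+3u_{2i+2}^2, & \partial_{u_{2i+2}}\overline{B}=-(u_{2i+1}-3u_{2i+2})^2,
\end{array}
\end{equation*}
and I would verify the key polynomial identity
$$\partial_{u_{2i+1}}\overline{A}\cdot\partial_{u_{2i+2}}\overline{B}-\partial_{u_{2i+2}}\overline{A}\cdot\partial_{u_{2i+1}}\overline{B}=-12g_i^2.$$
Thus at any singular point all $g_i$ vanish, whence $\overline{A}=u_0^3=0$ gives $u_0=0$. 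If two nonzero pairs carried opposite signs ($\epsilon_i=+$, $\epsilon_j=-$), the minor from columns $(u_{2i+1},u_{2j+1})$ would evaluate to $96\xi u_{2i+2}^2u_{2j+2}^2\neq 0$, contradicting the rank-one hypothesis; so all nonzero pairs share a sign and $p\in Z^{n-1}$. Conversely, evaluating the Jacobian at a generic point of $\Pi_+$ yields $\nabla\overline{B}=-\tfrac{\xi}{3}\nabla\overline{A}$, so the rank is exactly one and $Z^{n-1}\subseteq\Sing Y^{2n-2}$.

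\emph{Multiplicity.} Assume $\mathrm{char}(k)=0$ and $n\ge 2$. At a general $p\in\Pi_+$ I introduce the local substitution $v_i:=u_{2i+1}+\xi u_{2i+2}$ and $w_i:=u_{2i+1}-\xi u_{2i+2}$, so that $g_i=v_iw_i$, $w_i(p)\neq 0$, and the ideal of $\Pi_+$ at $p$ is $(u_0,v_0,\ldots,v_{n-1})$. Expanding $\overline{A}$ shows its initial form at $p$ is the nonzero linear form $L:=\tfrac{\xi-3}{2\xi}\sum_i w_i(p)^2\,v_i$, and similarly $\overline{B}$ has the proportional linear initial term $\tfrac{\xi+1}{2\xi}\sum_i w_i(p)^2 v_i$. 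I then form $C:=(\xi+1)\overline{A}-(\xi-3)\overline{B}$, whose linear part cancels by construction; a direct computation collapses it to $C=4\bigl(u_0^3+\sum_i v_i^2 w_i\bigr)$, whose initial form at $p$ is the nondegenerate diagonal quadric $Q:=4\sum_i w_i(p)\,v_i^2$. Since $(\overline{A},\overline{B})=(\overline{A},C)$ as ideals and, for $n\ge 2$ and generic $w_i(p)$, the quadric $Q$ is not divisible by the linear form $L$ in the associated graded polynomial ring, the initial forms $L,Q$ constitute a regular sequence; hence $\mult_p(Y^{2n-2})=\deg L\cdot\deg Q=1\cdot 2=2$. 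Galois conjugation takes care of the general point of $\Pi_-$.
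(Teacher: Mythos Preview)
Your proof is correct, and for the singular locus and multiplicity claims it takes a cleaner, more structural route than the paper.

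For $\Sing(Y^{2n-2})=Z^{n-1}$, the paper writes out all seven families of $2\times 2$ Jacobian minors explicitly and then asserts that the reduced locus they cut out equals $Z^{n-1}$. You instead exploit the factorization $\overline A=u_0^3+\sum_i(u_{2i+1}+3u_{2i+2})g_i$, $\overline B=u_0^3+\sum_i(u_{2i+1}-u_{2i+2})g_i$ with $g_i=u_{2i+1}^2+3u_{2i+2}^2$, extract the single identity $\partial_{u_{2i+1}}\overline A\cdot\partial_{u_{2i+2}}\overline B-\partial_{u_{2i+2}}\overline A\cdot\partial_{u_{2i+1}}\overline B=-12\,g_i^2$ to force $g_i=0$ and $u_0=0$, and then use one cross minor to rule out mixed signs $\epsilon_i\neq\epsilon_j$. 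This is less bookkeeping and makes the geometric reason transparent.

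For the multiplicity, the paper picks the specific point $p_-=[0:\xi:1:\xi:1:0:\dots:0]$, intersects the tangent cone with the slice $\{u_5=\dots=u_{2n}=0\}$, and reads off degree two. Your approach is coordinate-free on $\Pi_+$: after the change of variables $v_i=u_{2i+1}+\xi u_{2i+2}$, $w_i=u_{2i+1}-\xi u_{2i+2}$, you observe that $(\xi+1)\overline A-(\xi-3)\overline B=4\bigl(u_0^3+\sum_i v_i^2w_i\bigr)$ has purely quadratic initial form $Q=4\sum_i w_i(p)v_i^2$, so $(\overline A,\overline B)=(\overline A,C)$ has initial forms $(L,Q)$ with $L$ linear; for $n\ge 2$ and generic $p$ these form a regular sequence, whence the tangent cone is $\{L=Q=0\}$ of degree $1\cdot 2$. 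This buys you a uniform argument at every general point rather than a single test point plus an implicit semicontinuity step, at the cost of restricting to $n\ge2$ (which is harmless, since the lemma is only applied in that range and the case $n=1$ is zero-dimensional and treated separately in the paper).
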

\begin{proof}
First, note that over the quadratic extension $k(\xi)$ with $\xi^2 = -3$ of the base field the scheme $Z^{n-1}$ splits as the disjoint union of the following linear spaces:
$$
Z_{+}^{n-1} = \{u_0 = u_{2i+1}+\xi u_{2i+2} = 0,\: i = 0,\dots n-1\}, \quad Z_{-}^{n-1} = \{u_0 = u_{2i+1}-\xi u_{2i+2} = 0,\: i = 0,\dots n-1\}.
$$
Hence, $\dim(Z^{n-1}) = n-1$, $\deg(Z^{n-1}) = 2$, and $Z^{n-1}$ is smooth. The partial derivatives of $\overline{A}$ are given by $\frac{\partial\overline{A}}{\partial u_0} = 3u_0^2$,
$$
\begin{array}{ll}
\frac{\partial\overline{A}}{\partial u_{2i+1}} & = 3(u_{2i+1}+u_{2i+2})^2;\\
\frac{\partial\overline{A}}{\partial u_{2i+2}} & = 3(u_{2i+1}^2+2u_{2i+1}u_{2i+2}+9u_{2i+2}^2);
\end{array} 
$$
for $i = 0,\dots,n-1$, and the partial derivatives of $\overline{B}$ are given by $\frac{\partial\overline{B}}{\partial u_0} = 3u_0^2$,
$$
\begin{array}{ll}
\frac{\partial\overline{B}}{\partial u_{2i+1}} & = 3u_{2i+1}^2-2u_{2i+1}u_{2i+2}+3u_{2i+2}^2;\\
\frac{\partial\overline{B}}{\partial u_{2i+2}} & = -(u_{2i+1}-3u_{2i+2})^2;
\end{array} 
$$
for $i = 0,\dots,n-1$. Therefore, the scheme $JY^{n-2}$, cut out by the $2\times 2$ minors of the corresponding Jacobian matrix, is defined by the following equations:
$$
\left\lbrace\begin{array}{ll}
u_0^2u_{2i+1}u_{2i+2} & = 0;\\ 
u_0^2(u_{2i+1}^2+9u_{2i+2}^2) & = 0;\\
(u_{2i+1}^2+3u_{2i+2}^2)^2 & = 0;\\
(u_{2s+1}u_{2t+4}-u_{2s+2}u_{2t+3})(u_{2s+2}u_{2t+3}-u_{2s+2}u_{2t+4}) & = 0;\\
(u_{2s+1}u_{2t+4}-u_{2s+2}u_{2t+3})(u_{2s+2}u_{2t+3}-9u_{2s+2}u_{2t+4}) & = 0;\\
u_{2s+1}^2u_{2t+3}^2+9u_{2s+1}^2u_{2t+4}^2-4u_{2s+1}u_{2s+2}u_{2t+3}u_{2t+4} + u_{2s+2}^2u_{2t+3}^2+9u_{2s+2}^2u_{2t+4}^2 & = 0;\\
u_{2s+1}^2u_{2t+3}^2+u_{2s+1}^2u_{2t+4}^2-4u_{2s+1}u_{2s+2}u_{2t+3}u_{2t+4} + 9u_{2s+2}^2u_{2t+3}^2+9u_{2s+2}^2u_{2t+4}^2 & = 0;
\end{array}\right.
$$
for $i = 0,\dots,n-1$; $s,t = 0,\dots,n-2$ with $t\geq s$. To get the equality $\Sing(Y^{2n-2}) = Z^{n-1}$ is it enough to note that the reduced subscheme of $JY^{n-2}$ is exactly $Z^{n-1}$.

Now, consider the point $p_{-} = [0:\xi:1:\xi:1:0:\dots :0]\in Z_{-}^{n-1}$, and let $TC_{p_{-}}Y^{2n-2}$ be the tangent cone of $Y^{2n-2}$ in $p_{-}$. Note that 
$$
TC_{p_{-}}Y^{2n-2}\cap \{u_5 = \dots = u_{2n} = 0\} = \{u_1-\xi u_2 +u_3-\xi u_4 = u_3^2-2\xi u_3u_4 -3u_4^2 = 0\}.
$$
Hence $\mult_{p_{-}}Y^{2n-2} = 2$, and the same holds for the conjugate point $p_{+} \in Z_{+}^{n-1}$.
\end{proof}

\begin{Remark}
The complete intersection $Y^{2n-2}$ has points of multiplicity bigger than two. For instance, when $n = 2$ the tangent cone of $Y^{2}$ at $q_{-} = [0:0:0:\xi:1]$ is
$$
TC_{q_{-}}Y^{2} = \{u_3-\xi u_4 = u_0^3+u_1^3-\xi u_1^2u_2+3u_1u_2^2-3\xi u_2^3 = 0\}
$$
and hence $\mult_{q_{-}}Y^{2} = 3$.
\end{Remark}

\begin{Proposition}\label{linsys4}
Let $\mathcal{F}_{2n}$ be the linear subsystem of $|\mathcal{O}_{\mathbb{P}^{2n}}(4)|$ spanned by the quartic polynomials $\overline{\varphi}_i$ in (\ref{comp_m}):
$$
\mathcal{F}_{2n} = \left\langle \overline{\varphi}_0,\dots,\overline{\varphi}_{2n+1}\right\rangle\subset |\mathcal{O}_{\mathbb{P}^{2n}}(4)|,
$$
and $|\mathcal{I}_{Y^{2n-2},2Z^{n-1}}(4)|\subset |\mathcal{O}_{\mathbb{P}^{2n}}(4)|$ the linear subsystem of $|\mathcal{O}_{\mathbb{P}^{2n}}(4)|$ of quartics containing $Y^{2n-2}$ and vanishing with multiplicity two on $Z^{n-1}$. Then
$$
\mathcal{F}_{2n} = |\mathcal{I}_{Y^{2n-2},2Z^{n-1}}(4)|
$$
and $h^0(\mathbb{P}^{2n},\mathcal{I}_{Y^{2n-2},2Z^{n-1}}(4)) = 2n+2$.
\end{Proposition}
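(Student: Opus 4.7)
The plan is to sandwich dimensions: I would establish the inclusion $\mathcal{F}_{2n}\subseteq|\mathcal{I}_{Y^{2n-2},2Z^{n-1}}(4)|$, show that the birationality of $\overline{\varphi}$ forces $h^0(\mathcal{F}_{2n})=2n+2$, and bound $h^0(\mathcal{I}_{Y^{2n-2},2Z^{n-1}}(4))\leq 2n+2$ by combining a Koszul computation with a count of the linear conditions imposed by multiplicity two on $Z^{n-1}$. Together these three ingredients force the two linear systems to coincide and both to have dimension $2n+2$.

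Vanishing on $Y^{2n-2}$ is immediate since each $\overline{\varphi}_i$ in \ref{comp_m} is of the form $L_i\overline{A}+M_i\overline{B}$ with linear $L_i,M_i$. The key point is the multiplicity-two condition along $Z^{n-1}$. By Lemma \ref{lsing} the subscheme $Z^{n-1}=\Sing(Y^{2n-2})$ is precisely the rank-one locus of the Jacobian of $(\overline{A},\overline{B})$, so $\nabla\overline{A}$ and $\nabla\overline{B}$ are everywhere proportional on each Galois-conjugate component $Z^{n-1}_{\pm}$. An explicit evaluation of the partial derivatives in the parametrization $u_0=0$, $u_{2i+1}=\mp\xi\,u_{2i+2}$ of $Z^{n-1}_{\mp}$ yields the clean identity
$$
\nabla\overline{A}\;=\;\mp\xi\,\nabla\overline{B}\qquad\text{on }Z^{n-1}_{\mp}.
$$
Hence multiplicity at least two of $L\overline{A}+M\overline{B}$ at a general point $p\in Z^{n-1}_{-}$ becomes $(M-\xi L)(p)=0$. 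Writing $L=\sum\alpha_iu_i$ and $M=\sum\beta_iu_i$ and reducing modulo $\mathcal{I}_{Z^{n-1}_{-}}=(u_0,u_1-\xi u_2,\ldots,u_{2n-1}-\xi u_{2n})$ turns the vanishing of $M-\xi L$ into the $k$-rational equations
$$
3\alpha_{2i+1}+\beta_{2i+2}=0,\qquad \beta_{2i+1}=\alpha_{2i+2},\qquad i=0,\ldots,n-1,
$$
after separating real and imaginary parts over $k(\xi)/k$; the component $Z^{n-1}_{+}$ produces the same system by Galois symmetry. I would then inspect the four families of generators in \ref{comp_m} one at a time and verify that each corresponding pair $(L,M)$ satisfies these constraints.

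For the dimension count, the Koszul resolution of $\mathcal{I}_{Y^{2n-2}}$ on the regular sequence of cubics $(\overline{A},\overline{B})$, twisted by $\mathcal{O}_{\mathbb{P}^{2n}}(4)$, reads
$$
0\to\mathcal{O}_{\mathbb{P}^{2n}}(-2)\to\mathcal{O}_{\mathbb{P}^{2n}}(1)^{\oplus 2}\to\mathcal{I}_{Y^{2n-2}}(4)\to 0,
$$
giving $h^0(\mathcal{I}_{Y^{2n-2}}(4))=2(2n+1)=4n+2$. The $2n$ conditions displayed above involve disjoint quadruples of variables for distinct $i$, so they are linearly independent, and therefore $h^0(\mathcal{I}_{Y^{2n-2},2Z^{n-1}}(4))\leq 2n+2$. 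On the other hand, no nontrivial linear relation can hold among the $\overline{\varphi}_i$ because their common image is the nondegenerate hypersurface $X^{2n}\subset\mathbb{P}^{2n+1}$; hence $h^0(\mathcal{F}_{2n})=2n+2$. Combined with the inclusion from the previous paragraph, this forces $\mathcal{F}_{2n}=|\mathcal{I}_{Y^{2n-2},2Z^{n-1}}(4)|$ and $h^0(\mathcal{I}_{Y^{2n-2},2Z^{n-1}}(4))=2n+2$.

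The main obstacle is the explicit proportionality $\nabla\overline{A}=\mp\xi\,\nabla\overline{B}$ on $Z^{n-1}_{\mp}$ and its reduction to the clean pair of $k$-rational equations above; once those are in hand, the case-by-case verification for the $\overline{\varphi}_i$ and the Koszul-based count are essentially bookkeeping.
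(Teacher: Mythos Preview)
Your argument is correct and follows the same overall architecture as the paper's proof: establish the inclusion $\mathcal{F}_{2n}\subseteq|\mathcal{I}_{Y^{2n-2},2Z^{n-1}}(4)|$, note that the $\overline{\varphi}_i$ are independent, and compute $h^0(\mathcal{I}_{Y^{2n-2},2Z^{n-1}}(4))=2n+2$ by first finding $h^0(\mathcal{I}_{Y^{2n-2}}(4))=4n+2$ and then imposing $2n$ independent linear conditions coming from multiplicity two along $Z^{n-1}$. The linear constraints you obtain, $3\alpha_{2i+1}+\beta_{2i+2}=0$ and $\beta_{2i+1}=\alpha_{2i+2}$, are exactly the relations $b_{2i+2}=-3a_{2i+1}$ and $b_{2i+1}=a_{2i+2}$ that the paper derives.

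Where you differ from the paper is in packaging. The paper verifies the inclusion by writing out every partial derivative $\partial\overline{\varphi}_i/\partial u_j$ explicitly and checking that each one lies in the ideal of $Z^{n-1}$, and then repeats a similar computation for a general $F=L_{\overline A}\overline A+L_{\overline B}\overline B$ to extract the linear conditions. You instead isolate once and for all the proportionality of $\nabla\overline A$ and $\nabla\overline B$ along the two components $Z^{n-1}_{\pm}$, deduce from it that multiplicity two is equivalent to the single linear condition $(M\mp\xi L)|_{Z^{n-1}_{\mp}}=0$, and only then specialize to the individual $\overline{\varphi}_i$. This is a genuine economy: it replaces two pages of case-by-case derivative computations with one short calculation, and it explains conceptually \emph{why} the $2n$ conditions take the form they do (they are precisely the real and imaginary parts of a single $k(\xi)$-linear equation). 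The paper's approach, by contrast, makes the vanishing completely explicit and needs no appeal to the rank-one Jacobian locus, which is perhaps safer if one is worried about behavior at special points of $Z^{n-1}$.

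One small remark: your displayed identity $\nabla\overline A=\mp\xi\,\nabla\overline B$ on $Z^{n-1}_{\mp}$ has a sign that is inconsistent with the condition $(M-\xi L)|_{Z^{n-1}_-}=0$ you derive from it; a direct check gives $\nabla\overline A=-\xi\,\nabla\overline B$ on $Z^{n-1}_-$, which is what actually produces $(M-\xi L)\nabla\overline B$. Your downstream equations are correct, so this is only a typographical slip in the proportionality formula, not a gap in the argument.
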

\begin{proof}
Clearly the polynomials $\overline{\varphi}_i$ in (\ref{comp_m}) vanish on $Y^{2n-2} = \{\overline{A} = \overline{B} =0\}\subset\mathbb{P}^{2n}$. We will prove that the $\overline{\varphi}_i$ vanish with multiplicity at least two on $Z^{n-1}$. We have
$$
\begin{array}{ll}
\frac{\partial\overline{\varphi}_{2i}}{\partial u_0} = -3u_0^2(u_{2i+1}+3u_{2i+2}), & \frac{\partial\overline{\varphi}_{2i+1}}{\partial u_0} = 3u_0^2(u_{2i+1}-3u_{2i+2});
\end{array}
$$
for $i = 0,\dots,n-1$, and 
$$
\begin{array}{lll}
\frac{\partial\overline{\varphi}_{2n}}{\partial u_0} & = &  -4u_0^3 + \sum_{i=1}^{n-1}(-u_{2i+1}+3u_{2i+2})(u_{2i+1}^2+3u_{2i+2}^2);\\ 
\frac{\partial\overline{\varphi}_{2n+1}}{\partial u_0} & = & 4u_0^3 + \sum_{i=1}^{n-1}(u_{2i+1}+3u_{2i+2})(u_{2i+1}^2+3u_{2i+2}^2).
\end{array}
$$
Set 
$$
\alpha_{i,j} = u_{2j+1}^2+3u_{2j+2}^2,\: \beta_{i,j} = u_{2j+1}u_{2i+1}+3u_{2j+2}u_{2i+2},\: \gamma_{i,j} = u_{2j+2}u_{2i+1}-u_{2j+1}u_{2i+2} 
$$
and
$$
\begin{array}{ll}
\eta_j = u_{2j+1}^2+3u_{2j+2}^2, & \delta_i = u_{2i+1}^2+3u_{2i+2}^2,\\ 
\epsilon_{i,j} = u_{2i+1}u_{2j+1}+3u_{2i+2}u_{2j+2}, & \zeta_{i,j} = u_{2i+2}u_{2j+1}-u_{2i+1}u_{2j+2}. 
\end{array} 
$$
First, consider the polynomials $\overline{\varphi}_{2i}$ with $i = 1,\dots,n-1$. We have
$$
\begin{array}{lll}
\frac{\partial\overline{\varphi}_{2i}}{\partial u_{2j+1}} & = & -3(u_{2i+1}+3u_{2i+2})\alpha_{i,j}+6u_{2j+2}\beta_{i,j}+6u_{2j+2}\gamma_{i,j};\\ 
\frac{\partial\overline{\varphi}_{2i}}{\partial u_{2j+2}} & = & 3(u_{2i+1}-u_{2i+2})\alpha_{i,j}-6u_{2j+2}\beta_{i,j}+18u_{2j+2}\gamma_{i,j};
\end{array} 
$$
for $0\leq j < i$,
$$
\begin{array}{lll}
\frac{\partial\overline{\varphi}_{2i}}{\partial u_{2i+1}} & = & \sum_{j=0}^{i-1}(-u_{2j+1}+3u_{2j+2})\eta_j-4u_{2i+1}\delta_i-\sum_{j=i+1}^{n-1}(u_{2j+1}-3u_{2j+2})\eta_j - u_0^3;\\ 
\frac{\partial\overline{\varphi}_{2i}}{\partial u_{2i+2}} & = & -3\sum_{j=0}^{i-1}(u_{2j+1}+u_{2j+2})\eta_j -12u_{2i+2}\delta_i-3\sum_{j=i+1}^{n-1}(u_{2j+1}+u_{2j+2})\eta_j - 3u_0^3;
\end{array} 
$$
and 
$$
\begin{array}{lll}
\frac{\partial\overline{\varphi}_{2i}}{\partial u_{2j+1}} & = & -3(u_{2j+1}+u_{2j+2})\epsilon_{i,j}-3(3u_{2j+1}+u_{2j+2})\zeta_{i,j};\\ 
\frac{\partial\overline{\varphi}_{2i}}{\partial u_{2j+2}} & = & 3(u_{2j+1}-3u_{2j+2})\epsilon_{i,j}-3(u_{2j+1}+9u_{2j+2})\zeta_{i,j};
\end{array} 
$$
for $i < j \leq n-1$. For $i = n$ we get
$$
\begin{array}{lll}
\frac{\partial\overline{\varphi}_{2n}}{\partial u_{2j+1}} & = &  -3u_0(u_{2j+1}-u_{2j+2})^2;\\ 
\frac{\partial\overline{\varphi}_{2n}}{\partial u_{2j+2}} & = & 3u_0(u_{2j+1}^2-2u_{2j+1}u_{2j+2}+9u_{2j+2}^2);
\end{array}
$$
for $j = 0,\dots,n-1$.

Now, consider the polynomials $\overline{\varphi}_{2i+1}$ with $i = 1,\dots,n-1$. We have
$$
\begin{array}{lll}
\frac{\partial\overline{\varphi}_{2i+1}}{\partial u_{2j+1}} & = & 3(u_{2i+1}-3u_{2i+2})\alpha_{i,j}+6u_{2j+2}\beta_{i,j}-6u_{2j+2}\gamma_{i,j};\\ 
\frac{\partial\overline{\varphi}_{2i+1}}{\partial u_{2j+2}} & = & 3(u_{2i+1}+u_{2i+2})\alpha_{i,j}+6u_{2j+2}\beta_{i,j}+18u_{2j+2}\gamma_{i,j};
\end{array} 
$$
for $0\leq j < i$, 
$$
\begin{array}{lll}
\frac{\partial\overline{\varphi}_{2i+1}}{\partial u_{2i+1}} & = & \sum_{j=0}^{i-1}(u_{2j+1}+3u_{2j+2})\eta_j+4u_{2i+1}\delta_i+\sum_{j=i+1}^{n-1}(u_{2j+1}+3u_{2j+2})\eta_j + u_0^3;\\ 
\frac{\partial\overline{\varphi}_{2i+1}}{\partial u_{2i+2}} & = & -3\sum_{j=0}^{i-1}(u_{2j+1}-u_{2j+2})\eta_j +12u_{2i+2}\delta_i-3\sum_{j=i+1}^{n-1}(u_{2j+1}-u_{2j+2})\eta_j - 3u_0^3;
\end{array} 
$$
and 
$$
\begin{array}{lll}
\frac{\partial\overline{\varphi}_{2i+1}}{\partial u_{2j+1}} & = & 3(u_{2j+1}-u_{2j+2})\epsilon_{i,j}-3(3u_{2j+1}+u_{2j+2})\zeta_{i,j};\\ 
\frac{\partial\overline{\varphi}_{2i+1}}{\partial u_{2j+2}} & = & 3(u_{2j+1}+3u_{2j+2})\epsilon_{i,j}+3(u_{2j+1}-9u_{2j+2})\zeta_{i,j};
\end{array} 
$$
for $i < j \leq n-1$. For $i = n$ we get
$$
\begin{array}{lll}
\frac{\partial\overline{\varphi}_{2n+1}}{\partial u_{2j+1}} & = &  3u_0(u_{2j+1}+u_{2j+2})^2;\\ 
\frac{\partial\overline{\varphi}_{2n+1}}{\partial u_{2j+2}} & = & 3u_0(u_{2j+1}^2+2u_{2j+1}u_{2j+2}+9u_{2j+2}^2);
\end{array}
$$
for $j = 0,\dots,n-1$. 

Therefore, by the definition of $Z^{n-1}$ in Lemma \ref{lsing} we get that the $\overline{\varphi}_i$ vanish with multiplicity at least two on $Z^{n-1}$. Note that $\frac{\partial^2\overline{\varphi}_{2n+1}}{\partial u_{0}\partial u_{2j+1}}$ does not vanish along $Z^{n-1}$, and hence a general element of $\mathcal{F}_{2n}$ has multiplicity two along $Z^{n-1}$. 

So far we proved that $\mathcal{F}_{2n} \subset |\mathcal{I}_{Y^{2n-2},2Z^{n-1}}(4)|$. It is clear from their expression in (\ref{ABu0}) that the $\overline{\varphi}_i$ for $i = 0,\dots, 2n+1$ form a basis of $\mathcal{F}_{2n}$. Therefore, to conclude it is enough to show that $h^0(\mathbb{P}^{2n},\mathcal{I}_{Y^{2n-2},2Z^{n-1}}(4)) = 2n+2$.

Let $F\in k[u_0,\dots,u_{2n}]_4$ be a homogeneous polynomial of degree four vanishing on $Y^{2n-2}$. Then 
$$
F = L_{\overline{A}}\overline{A} + L_{\overline{B}}\overline{B}
$$
with 
$$
\begin{array}{ll}
L_{\overline{A}} = \sum_{i=0}^{2n}a_iu_i, & L_{\overline{B}} = \sum_{i=0}^{2n}b_iu_i;
\end{array}
$$
linear forms. In particular, the space $H^0(\mathbb{P}^{2n},\mathcal{I}_{Y^{2n-2}}(4))$ of quartics containing $Y^{2n-2}$ has dimension $4n+2$.

Consider the geometric components $Z^{n-1}_{+}$ and $Z^{n-1}_{-}$ of $Z^{n-1}$ in the proof of Lemma \ref{lsing}. On $Z^{n-1}_{+}$ we have $u_0=0$ and $u_{2i+1} = -\xi u_{2i+2}$ for $i = 0,\dots,n-1$, while on $Z^{n-1}_{-}$ we have $u_0=0$ and $u_{2i +1} = \xi u_{2i+2}$ for $i = 0,\dots,n-1$. Set
$$
\begin{array}{lll}
L_{+} & = & \sum_{i=0}^{n-1}(\xi a_{2i+2}-\xi b_{2i+1}+3a_{2i+1}+b_{2i+2})u_{2i+2};\\
L_{-} & = & \sum_{i=0}^{n-1}(\xi a_{2i+2}-\xi b_{2i+1}-3a_{2i+1}-b_{2i+2})u_{2i+2}.
\end{array}
$$
Taking the partial derivatives of $F$ and substituting first $u_0=0$, $u_{2i+1} = -\xi u_{2i+2}$ and then $u_0=0$, $u_{2i+1} = \xi u_{2i+2}$ we get that $\frac{\partial F}{\partial u_{0}}_{|Z^{n-1}} = 0$, and
$$
\begin{array}{ll}
\frac{\partial F}{\partial u_{2j+1}}_{|Z^{n-1}_{+}} =  2(\xi - 3)u_{2j+2}^2 L_{+},  & \frac{\partial F}{\partial u_{2j+2}}_{|Z^{n-1}_{+}} = -6(\xi + 1)u_{2j+2}^2 L_{+};\\
\frac{\partial F}{\partial u_{2j+1}}_{|Z^{n-1}_{-}} = 2(\xi + 3)u_{2j+2}^2 L_{-}, & \frac{\partial F}{\partial u_{2j+2}}_{|Z^{n-1}_{-}} = -6(\xi - 1)u_{2j+2}^2 L_{-};
\end{array} 
$$
for $j \in 0,\dots,n-1$.

Therefore, the subspace of $H^0(\mathbb{P}^{2n},\mathcal{I}_{Y^{2n-2}}(4))$ consisting of quartics which are singular along $Z^{n-1}$ is defined by
$$
\left\lbrace\begin{array}{lll}
\xi (a_{2i+2}-b_{2i+1})+3a_{2i+1}+b_{2i+2} & = & 0;\\
\xi (a_{2i+2}- b_{2i+1})-3a_{2i+1}-b_{2i+2} & = & 0;
\end{array}\right.
$$ 
for $i = 0,\dots,n-1$, that is $b_{2i+1} = a_{2i+2}$ and $b_{2i+2} = -3a_{2i+1}$ for $i = 0,\dots,n-1$. Finally, 
$$H^0(\mathbb{P}^{2n},\mathcal{I}_{Y^{2n-2},2Z^{n-1}}(4))\subset H^0(\mathbb{P}^{2n},\mathcal{I}_{Y^{2n-2}}(4))$$ 
has dimension $4n+2 - 2n = 2n+2$.
\end{proof}

\section{Rationality in characteristic two}\label{ch2}
So far we proved that the linear system $|\mathcal{I}_{Y^{2n-2},2Z^{n-1}}(4)|\subset |\mathcal{O}_{\mathbb{P}^{2n}}(4)|$ of quartics which contains $Y^{2n-2}$ and are singular along $Z^{n-1}$ yields a rational parametrization 
\stepcounter{thm}
\begin{equation}\label{par_rs}
\overline{\varphi}_{2n}:\mathbb{P}^{2n}\dasharrow X^{2n}\subset\mathbb{P}^{2n+1}
\end{equation}
of the Fermat cubic hypersurface $X^{2n}$. From the computations in Section \ref{res_scal} it is clear that $\overline{\varphi}_{2n}$ can not yield a birational parametrization in characteristic two. One could try to take the reduction modulo two of the polynomials defining $\overline{\varphi}_{2n}$ but this would lead to a map contracting $\mathbb{P}^{2n}$ onto the linear subspace 
$$\{x_{2i}+x_{2i+1} = 0,\text{ for } i = 0,\dots,n\}\subset X^{2n}.$$ 

Hence, a different idea is needed. An explicit description of the inverse of $\overline{\varphi}_{2n}$ will lead us to a birational parametrization of $X^{2n}$ in characteristic two. We will not develop in full detail the computations that are similar to those in Section \ref{res_scal}. Set 
$$
P = \sum_{i=0}^{2n}u_i^3,\: Q = \sum_{i=0}^{n-1}u_{2i}^2u_{2i+1}+u_{2i}u_{2i+1}^2+u_{2i+1}^3
$$
and\\
\stepcounter{thm}
\begin{equation}\label{eqg}
\begin{array}{lll}
g_{2i} & = & (u_{2i}+u_{2i+1})P + u_{2i}Q; \\ 
g_{2i+1} & = & u_{2i}P + u_{2i+1}Q; \\ 
g_{2n} & = & u_{2n}(P + Q); \\ 
g_{2n+1} & = & u_{2n}P; 
\end{array}
\end{equation}
for $i = 0,\dots,n-1$.

\begin{Lemma}\label{leminv0}
Consider the subschemes
$$
Y^{2n-2} = \{P = Q = 0\}
$$
and
$$
Z^{n-1} = 
\left\lbrace
\begin{array}{llll}
u_{2i}^2+u_{2i}u_{2i+1}+u_{2i+1}^2 & = & 0 & \text{for } i = 0,\dots,n-1; \\ 
u_{2i}u_{2j}+u_{2i}u_{2j+1}+u_{2i+1}u_{2j+1} & = & 0 & \text{for } i = 0,\dots,n-1,\: j = i+1\dots,n-1; \\ 
u_{2i+1}u_{2j+2}+u_{2i}u_{2j+3} & = & 0 & \text{for } i = 0,\dots,n-1,\: j = i,\dots,n-2;\\ 
u_{2n} & = & 0; & 
\end{array}\right.
$$
in $\mathbb{P}^{2n}$. Then $Y^{2n-2}$ is irreducible of dimension $2n-2$ and degree nine, $Z^{n-1}$ has degree two and geometrically it is the union of two conjugate $(n-1)$-planes, and $\Sing(Y^{2n-2}) = Z^{n-1}$.
\end{Lemma}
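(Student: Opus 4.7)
The plan is threefold: (i) identify $Z^{n-1}$ as a disjoint pair of conjugate $(n-1)$-planes and check that it lies in $Y^{2n-2}$; (ii) compute the Jacobian of $(P,Q)$ and match its $2\times 2$ minors with the equations of $Z^{n-1}$; and (iii) deduce the complete-intersection properties of $Y^{2n-2}$ (dimension, degree, irreducibility) from smoothness of the Fermat cubic combined with Serre's normality criterion.

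For (i), let $\omega\in\overline{k}$ satisfy $\omega^2+\omega+1=0$, so that $k(\omega)/k$ is quadratic and in characteristic two we have $u_{2i}^2+u_{2i}u_{2i+1}+u_{2i+1}^2=(u_{2i}+\omega u_{2i+1})(u_{2i}+\omega^2u_{2i+1})$. Matching this with the remaining families of equations of $Z^{n-1}$ one recognizes that
$$
Z^{n-1}_\pm=\{u_{2n}=0,\ u_{2i}+\omega^{\pm1}u_{2i+1}=0,\ i=0,\dots,n-1\}
$$
are Galois-conjugate $(n-1)$-planes whose disjoint union is precisely $Z^{n-1}$, yielding $\dim Z^{n-1}=n-1$ and $\deg Z^{n-1}=2$. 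Substituting $u_{2i}=\omega u_{2i+1}$ and $u_{2n}=0$ into $P$ and $Q$ collapses them using $1+\omega^3=0$ and $1+\omega+\omega^2=0$, so $Z^{n-1}\subset Y^{2n-2}$.

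For (ii), in characteristic two the partials $\partial_{u_i}P=u_i^2$, $\partial_{u_{2i}}Q=u_{2i+1}^2$, $\partial_{u_{2i+1}}Q=(u_{2i}+u_{2i+1})^2$, $\partial_{u_{2n}}Q=0$ are all squares, so every $2\times 2$ minor of the Jacobian is of the form $\alpha^2\delta^2+\beta^2\gamma^2=(\alpha\delta+\beta\gamma)^2$. The linear square roots, organized by the parity of the column pair involved, match the four families defining $Z^{n-1}$, once one uses the index symmetry that identifies the mixed family $u_{2i}u_{2j}+u_{2i}u_{2j+1}+u_{2i+1}u_{2j+1}$ with $u_{2i+1}u_{2j+2}+u_{2i}u_{2j+3}$. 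This gives $\Sing(Y^{2n-2})_{\mathrm{red}}=Z^{n-1}$.

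For (iii), $\{P=0\}$ is the Fermat cubic in characteristic two and is smooth since its partials $u_i^2$ cut out only the origin, hence irreducible. Since $Q\notin(P)$, the pair $(P,Q)$ is a regular sequence and $Y^{2n-2}$ is a Cohen--Macaulay complete intersection of pure dimension $2n-2$ and degree $9$. For $n\geq 3$ the codimension of $Z^{n-1}$ inside $Y^{2n-2}$ is $n-1\geq 2$, so Serre's $R_1+S_2$ criterion gives normality, and combined with connectedness of a positive-dimensional complete intersection in $\mathbb{P}^{2n}$ this forces irreducibility; the borderline case $n=2$ is handled by a direct projection argument in the style of Lemma \ref{Irr}, using that $Q$ is independent of $u_{2n}$ to realize $Y^{2}$ as a triple cover of the cubic $\{Q=0\}\subset\mathbb{P}^{3}$ and checking that this cover does not split. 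The main obstacle is precisely this last point: the bookkeeping needed to match all the Jacobian minors to the listed equations of $Z^{n-1}$, and the verification of irreducibility in the borderline case where the singular locus has codimension one in $Y^{2n-2}$.
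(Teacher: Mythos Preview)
Your proposal is correct. The paper's own proof of this lemma is a one-line pointer (``computations analogous to those in the proofs of Lemma~\ref{Irr} and Lemma~\ref{lsing}''), so you are in fact filling in what the paper only sketches, and parts (i) and (ii) of your argument are exactly the characteristic-two incarnation of those earlier computations. Your observation that in characteristic two every entry of the Jacobian of $(P,Q)$ is a perfect square, so that each $2\times 2$ minor is $(\alpha\delta+\beta\gamma)^2$, is a tidy way to match the minors against the defining equations of $Z^{n-1}$ and is specific to this setting; one small point you leave implicit is that the minors involving the last column only give multiples of $u_{2n}$, so you need $P=0$ to rule out the point $[0:\cdots:0:1]$ and force $u_{2n}=0$---but this is immediate.

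The genuine divergence is in part (iii). The paper's template (Lemma~\ref{Irr}) proves irreducibility for all $n\ge 2$ by projecting $Y^{2n-2}$ from a point, eliminating one variable, and checking that the image hypersurface is irreducible with the projection birational. You instead argue, for $n\ge 3$, via Serre's criterion: $Y^{2n-2}$ is a Cohen--Macaulay complete intersection (since $\{P=0\}$ is the smooth Fermat cubic and $Q\notin(P)$), the singular locus $Z^{n-1}$ has codimension $n-1\ge 2$ in $Y^{2n-2}$, hence $Y^{2n-2}$ is normal; connectedness of a positive-dimensional complete intersection in $\mathbb{P}^{2n}$ then gives irreducibility. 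This is more conceptual and avoids the explicit elimination, at the cost of having to treat $n=2$ separately by hand (where the singular locus is a divisor and normality fails). The paper's projection argument is uniform in $n$ but computationally heavier; your route is cleaner for large $n$ but bifurcates at the bottom. Both are valid.
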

\begin{proof}
The statement can be proved by computations analogous to those in the proofs of Lemma \ref{comp_m} and of Lemma \ref{lsing}.
\end{proof}

\begin{Proposition}\label{pinv1}
Consider the linear subsystems
$$
\mathcal{C}_{2n} = \left\langle g_0,\dots,g_{2n+1}\right\rangle \subset |\mathcal{O}_{\mathbb{P}^{2n}}(4)|
$$
and $|\mathcal{I}_{Y^{2n-2},2Z^{n-1}}(4)|\subset |\mathcal{O}_{\mathbb{P}^{2n}}(4)|$ of quartics containing $Y^{2n-2}$ and which are singular along $Z^{n-1}$. Then $\mathcal{C}_{2n} = |\mathcal{I}_{Y^{2n-2},2Z^{n-1}}(4)|\subset |\mathcal{O}_{\mathbb{P}^{2n}}(4)|$.
\end{Proposition}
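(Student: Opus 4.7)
The plan is to transcribe the strategy of Proposition \ref{linsys4} to the present setup, with the polynomials $P, Q$ from \eqref{eqg} playing the role of $\overline{A}, \overline{B}$ and the new $Z^{n-1}$ from Lemma \ref{leminv0}. Since $\dim \mathcal{C}_{2n} \leq 2n+2$ by construction and the opposite inclusion will follow from a dimension count, I would split the argument into: (a) verify $\mathcal{C}_{2n} \subseteq |\mathcal{I}_{Y^{2n-2}, 2Z^{n-1}}(4)|$, (b) show the $g_i$ are linearly independent so that $\dim \mathcal{C}_{2n} = 2n+2$, (c) show $\dim |\mathcal{I}_{Y^{2n-2}, 2Z^{n-1}}(4)| = 2n+2$.

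For (a), the containment in $|\mathcal{I}_{Y^{2n-2}}(4)|$ is immediate from \eqref{eqg}, since each $g_i$ is visibly an element of the ideal $(P, Q)$. For the multiplicity-two condition along $Z^{n-1}$, I would compute each partial derivative $\partial g_i / \partial u_j$ using the Leibniz rule (for instance $\partial g_{2i}/\partial u_j = \delta_{j, 2i}(P + Q) + \delta_{j, 2i+1} P + (u_{2i}+u_{2i+1}) \partial P/\partial u_j + u_{2i} \partial Q/\partial u_j$), and then check vanishing on $Z^{n-1}$ using that $Z^{n-1} \subset \{u_{2n} = 0\} \cap \{P = 0\} \cap \{Q = 0\}$ together with the quadratic relations of Lemma \ref{leminv0}; each surviving monomial should be killed by one of these relations.

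For (b), linear independence of $\{g_0, \ldots, g_{2n+1}\}$ is visible from inspection of \eqref{eqg}: the only $g_i$'s involving $u_{2n}$ are $g_{2n}, g_{2n+1}$, and in the remaining $2n$ polynomials the dependence on each pair $(u_{2i}, u_{2i+1})$ is distinct. For (c), I would first observe that $Y^{2n-2}$ is a complete intersection of two cubics, so its saturated ideal is $(P, Q)$ (by unmixedness), and hence any quartic vanishing on $Y^{2n-2}$ has the form $F = L_P P + L_Q Q$ with $L_P, L_Q \in k[u_0, \ldots, u_{2n}]_1$, giving a $(4n+2)$-dimensional space $H^0(\mathbb{P}^{2n}, \mathcal{I}_{Y^{2n-2}}(4))$. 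Passing to a field over which $Z^{n-1}$ splits as the union of two conjugate $(n-1)$-planes $Z_{+} \sqcup Z_{-}$, I would parametrize each $Z_{\pm}$, compute $\partial F / \partial u_j |_{Z_{\pm}}$ as linear functions in the coefficients of $L_P, L_Q$, and identify precisely $2n$ independent linear conditions on the $4n+2$ coefficients — coming in $n$ Galois-conjugate pairs indexed by $i = 0, \ldots, n-1$ — so that $\dim |\mathcal{I}_{Y^{2n-2}, 2Z^{n-1}}(4)| = 2n+2$.

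The main obstacle I anticipate is the dimension count in (c): one must show the singularity conditions are exactly $2n$, neither fewer (which would require explicit exhibition of the independent linear forms cut out on the coefficient space) nor more (which follows from containment (a) together with $\dim \mathcal{C}_{2n} = 2n+2$, and so is essentially automatic once (a) and (b) are in hand). Concretely, the verification that $\mathcal{C}_{2n} \subseteq |\mathcal{I}_{Y^{2n-2}, 2Z^{n-1}}(4)|$ combined with $\dim \mathcal{C}_{2n} = 2n+2$ already forces $\dim |\mathcal{I}_{Y^{2n-2}, 2Z^{n-1}}(4)| \geq 2n+2$; the only delicate point is the upper bound, for which one must verify that the conditions encoded by $\partial F / \partial u_j |_{Z_{\pm}} = 0$ for the $2n$ relevant indices $j$ are linearly independent. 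As in the proof of Proposition \ref{linsys4}, once the Jacobian data are organized by conjugate pairs, the matrix of conditions becomes block diagonal with nonvanishing $2 \times 2$ blocks, making the count transparent.
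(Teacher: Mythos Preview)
Your proposal is correct and follows exactly the approach the paper intends: the paper's proof of Proposition~\ref{pinv1} consists of the single sentence ``The statement follows from computations similar to those in the proof of Proposition~\ref{linsys4},'' and your steps (a)--(c) are precisely a transcription of that argument to the $(P,Q)$ setting. Your anticipated delicate point---the upper bound in the dimension count via the $2n$ independent linear conditions on the coefficients of $L_P, L_Q$---is handled in Proposition~\ref{linsys4} in the same block-by-conjugate-pair manner you describe.
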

\begin{proof}
The statement follows from computations similar to those in the proof of Proposition \ref{linsys4}. 
\end{proof}

\begin{Proposition}\label{inv1}
Consider the following subschemes 
$$
H^{n}_{\pm} = 
\left\lbrace\begin{array}{llll}
x_{2i}^2-x_{2i}x_{2i+1}+x_{2i+1}^2 & = & 0 & \text{for } i = 0,\dots,n; \\ 
x_{2i}x_{2j+2}-x_{2i}x_{2j+3}+x_{2i+1}x_{2j+3} & = & 0 & \text{for } i = 0,\dots,n-1,\: j = i,\dots,n-1; \\ 
x_{2i-1}x_{2j}-x_{2i-2}x_{2j+1} & = & 0 & \text{for } i = 1,\dots,n,\: j = i,\dots,n;
\end{array}\right.
$$
$$
X^{2n-2} = X^{2n} \cap \{x_{2n} = x_{2n+1} = 0\}
$$
in $\mathbb{P}^{2n+1}$. The linear system $|\mathcal{I}_{H^{n}_{\pm},X^{2n-2}}|\subset |\mathcal{O}_{\mathbb{P}^{2n+1}}(2)|$ induces a rational map $\mathbb{P}^{2n+1}\dasharrow \mathbb{P}^{2n}$ whose restriction $\qoppa_{2n}:X^{2n}\dasharrow \mathbb{P}^{2n}$ to $X^{2n}$ is birational. 
\end{Proposition}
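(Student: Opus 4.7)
The plan is to identify $\qoppa_{2n}$ with the birational inverse of $\overline{\varphi}_{2n}: \mathbb{P}^{2n} \dashrightarrow X^{2n}$ constructed in Section \ref{res_scal}. Since $\overline{\varphi}_{2n}$ is already birational by Proposition \ref{linsys4} together with the restriction-of-scalars argument, it suffices to show that the linear system $|\mathcal{I}_{H^n_\pm, X^{2n-2}}(2)|$ has the right dimension and that the induced map, when composed with $\overline{\varphi}_{2n}$, equals the identity on $\mathbb{P}^{2n}$.

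First I would identify $H^n_\pm$ geometrically. Over the quadratic extension $k(\xi)$ with $\xi^2 = -3$, the quadric $x_{2i}^2 - x_{2i}x_{2i+1} + x_{2i+1}^2$ factors as $(x_{2i} - a_+ x_{2i+1})(x_{2i} - a_- x_{2i+1})$ with $a_\pm = (1\pm\xi)/2$, and combining with the mixed quadrics the scheme $H^n_\pm$ splits as the disjoint union of the two conjugate $n$-planes $H_+$ and $H_-$ from Corollary \ref{c1}, both contained in $X^{2n}$. The subscheme $X^{2n-2} = X^{2n} \cap \{x_{2n}=x_{2n+1}=0\}$ is the Fermat cubic in the complementary hyperplane, and inspection of the formulas in \ref{comp_m} shows that it is the image of $\{u_0 = 0\}$ under $\overline{\varphi}_{2n}$, so it must lie in the base locus of any inverse. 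The $n$-planes $H_\pm$, on the other hand, are the loci where the pair of conjugate points $x^{a_\pm}(u)$ used in the restriction-of-scalars parametrization of Section \ref{res_scal} collapses onto a single point of $X^{2n}$, which accounts for their appearance in the base locus.

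Next I would compute $h^0(\mathbb{P}^{2n+1}, \mathcal{I}_{H^n_\pm \cup X^{2n-2}}(2)) = 2n+1$ by working over $k(\xi)$ and performing a standard dimension count for quadrics containing two disjoint $n$-planes and a cubic hypersurface in a disjoint $\mathbb{P}^{2n-1}$; this matches $\dim\mathbb{P}^{2n}+1$ so that the linear system defines a rational map $\psi: \mathbb{P}^{2n+1} \dashrightarrow \mathbb{P}^{2n}$. The final and most substantial step is to verify that $\psi \circ \overline{\varphi}_{2n}$ is the identity. Concretely, one exhibits a basis $q_0, \ldots, q_{2n}$ of $|\mathcal{I}_{H^n_\pm, X^{2n-2}}(2)|$ and checks the polynomial identities $q_i(\overline{\varphi}_0(u),\dots,\overline{\varphi}_{2n+1}(u)) = u_i\, F(u)$ for a common factor $F(u)$ (expected to be a power of $\overline{A}$ or a product involving $\overline{A}$ and $\overline{B}$ from \ref{ABu0}). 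The cleanest route is to compute $\overline{\varphi}_{2n}^{-1}$ directly by elimination: solving the equations $x_j = \overline{\varphi}_j(u)$ for the ratios $u_i:u_j$, using the cubic relation $\sum_i x_i^3=0$ on $X^{2n}$ to reduce degree, produces expressions that turn out to be quadratic in the $x_j$'s, and one then identifies these with a basis of $|\mathcal{I}_{H^n_\pm, X^{2n-2}}(2)|$.

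The main obstacle is precisely this computational step: the quartic formulas for $\overline{\varphi}_i$ in \ref{comp_m} are sufficiently intricate that exhibiting the explicit quadrics recovering the $u_i$ and verifying that their common base locus is exactly $H^n_\pm \cup X^{2n-2}$ scheme-theoretically requires careful bookkeeping analogous to what was done in the proof of Proposition \ref{linsys4}. Once the identity $\psi \circ \overline{\varphi}_{2n} = \mathrm{id}$ is established, $\qoppa_{2n} = \psi|_{X^{2n}}$ is a left inverse of a birational map between varieties of the same dimension, hence birational.
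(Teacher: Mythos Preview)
Your approach matches the paper's proof for the case $\ch(k)\neq 2$: the paper writes down an explicit basis $\Qoppa_0,\dots,\Qoppa_{2n}$ of $|\mathcal{I}_{H^n_\pm,X^{2n-2}}(2)|$ and exhibits an automorphism $h$ of $\mathbb{P}^{2n}$ such that $h\circ\qoppa_{2n}$ is the inverse of $\overline{\varphi}_{2n}$, exactly as you suggest. So for characteristic $\neq 2,3$ your plan is correct and essentially identical to the paper's.

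The genuine gap is characteristic two. Proposition~\ref{inv1} sits in Section~\ref{ch2}, whose entire purpose is to produce a birational map that survives when $\ch(k)=2$; and Remark~\ref{sumup} right after the proof makes explicit that $\qoppa_{2n}$ is claimed to be birational over \emph{any} field with $\ch(k)\neq 3$. Your argument hinges on $\overline{\varphi}_{2n}$ being birational, but as the opening paragraph of Section~\ref{ch2} explains, in characteristic two the reduction of $\overline{\varphi}_{2n}$ contracts $\mathbb{P}^{2n}$ onto the linear subspace $\{x_{2i}+x_{2i+1}=0\}\subset X^{2n}$, so there is no birational $\overline{\varphi}_{2n}$ to invert. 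The paper deals with this by splitting the proof: for $\ch(k)\neq 2$ it argues as you do, while for $\ch(k)=2$ it instead verifies directly that the map $g_{2n}:\mathbb{P}^{2n}\dasharrow X^{2n}$ given by the quartics in (\ref{eqg}) satisfies $\qoppa_{2n}\circ g_{2n}=\mathrm{id}$. Your proposal needs this second half; without it, the statement you are proving is strictly weaker than the proposition as intended.
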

\begin{proof}
Note that geometrically $H^{n}_{\pm}$ is the union of two skew conjugate $n$-planes contained in $X^{2n}$. Hence, the linear system of quadric hypersurfaces of $\mathbb{P}^{2n+1}$ containing $H^{n}_{\pm}$ has $(n+1)^2$ sections. The containment of $X^{2n-2}$ imposes $n^2-1$ further independent condition on these quadrics and hence $|\mathcal{I}_{H^{n}_{\pm},X^{2n-2}}|$ has $2n+1$ sections that we will now write down explicitly. The polynomials
\begin{small}
$$
\begin{array}{lll}
\Qoppa_{2i} & = & x_{2i}x_{2n}-x_{2i}x_{2n+1} + x_{2i+1}x_{2n+1}; \\ 
\Qoppa_{2i+1} & = & x_{2i+1}x_{2n} - x_{2i}x_{2n+1}; \\ 
\Qoppa_{2n} & = & x_{2n}^2 - x_{2n}x_{2n+1} + x_{2n+1}^2; 
\end{array}
$$
\end{small}
for $i = 0,\dots,n-1$ form a basis for the space of sections of $|\mathcal{I}_{H^{n}_{\pm},X^{2n-2}}|$. Consider the map  
$$
\begin{array}{cccc}
 \qoppa_{2n}: & X^{2n}\subset\mathbb{P}^{2n+1} & \dasharrow & \mathbb{P}^{2n}_{(u_0,\dots,u_{2n}}\\
  & (x_{0},\dots,x_{2n+1}) & \mapsto & [\Qoppa_0:\dots:\Qoppa_{2n}].
\end{array}
$$
First, assume that $\ch(k)\neq 2$ and let $h:\mathbb{P}^{2n}_{(u_0,\dots,u_{2n})}\rightarrow \mathbb{P}^{2n}_{(u_0,\dots,u_{2n})}$ be the automorphism of $\mathbb{P}^{2n}_{(u_0,\dots,u_{2n})}$ given by 
$$
u_{2i+1}\mapsto 2u_{2i}-u_{2i+1},\: u_{2i+2}\mapsto u_{2i+1} 
$$
for $i = 0,\dots,n-1$, and $u_{0}\mapsto 2u_{2n}$. Then by (\ref{comp_m}) we get that $h\circ \qoppa_{2n}: X^{2n}\dasharrow \mathbb{P}^{2n}_{(u_0,\dots,u_{2n})}$ is the birational inverse of $\overline{\varphi}_{2n}$. 

Now, let $\ch(k) = 2$. A standard computation shows that the map
$$
\begin{array}{cccc}
 g_{2n}: & \mathbb{P}^{2n}_{(u_0,\dots,u_{2n})} & \dasharrow & X^{2n} \subset\mathbb{P}^{2n+1}\\
  & (u_{0},\dots,u_{2n}) & \mapsto & [g_0:\dots:g_{2n+1}]
\end{array}
$$
given by the quartics in (\ref{eqg}) is the birational inverse of $\qoppa_{2n}$.
\end{proof}

\begin{Remark}\label{sumup}
Summing-up we proved that the rational map $\qoppa_{2n}:X^{2n}\dasharrow \mathbb{P}^{2n}$ is birational over any field $k$ with $\ch(k)\neq 3$. When $\ch(k)\neq 2$ then the rational map $\overline{\varphi}_{2n}: \mathbb{P}^{2n}\dasharrow X^{2n}$ given by the restriction of scalars is, up to the automorphism $h:\mathbb{P}^{2n}\rightarrow\mathbb{P}^{2n}$, the birational inverse of $\qoppa_{2n}$. However, this is not the case in characteristic two. Indeed, when $\ch(k) = 2$ the map $\overline{\varphi}_{2n}$ contracts $\mathbb{P}^{2n}$ onto a proper subvariety of $X^{2n}$ and we must come up with an alternative rational map. This is the map $g_{2n}: \mathbb{P}^{2n}\dasharrow X^{2n}$ which by Proposition \ref{inv1} is the birational inverse of $\qoppa_{2n}$ in characteristic two. 
\end{Remark}

\section{Quadro-Cubic Cremona transformations}\label{bri}

We will now work out explicitly the birational parametrization $\phi:\mathbb{G}(1,n+1)\dasharrow X^{2n}$ in the proof of Proposition \ref{p1}. As in Section \ref{ch2} for the computations that are analogous to those in Section \ref{res_scal} we will be quick. In the notation of the proof of Proposition \ref{p1} set
$$
\Lambda = \{x_1 = x_{2i} = 0, \text{ for } i = 0,\dots,n\}\subset\mathbb{P}^{2n+1}
$$
and write a general $(n+1)$-plane $H\subset\mathbb{P}^{2n+1}$ containing $\Lambda$ as 
$$
H =  \{x_{2i} + t_{2i+1}x_1 + t_{2i+2}x_{2n} = 0, \text{ for } i\in [0,n-1]\}\subset\mathbb{P}^{2n+1}
$$
with $t_{2i+1},t_{2i+2}\in k$. Note that $\Lambda\cap (H_{+}\cup H_{-}) = \emptyset$. Let $p^{a_{+}}$ be the intersection point of $H$ and $H_{+}$, and $p^{a_{-}}$ the intersection point of $H$ and $H_{-}$. We have
\stepcounter{thm}
\begin{equation}\label{eq+}
\begin{array}{lll}
p^{a_{+}}_0 & = & -\frac{(\xi+1)^2t_{2}}{2(\xi+2t_{1}+1)}; \\ 
p^{a_{+}}_1 & = & -\frac{(\xi+1)t_{2}}{\xi+2t_{1}+1}; \\ 
p^{a_{+}}_{2i+2} & = & -\frac{(\xi+1)(\xi t_{2i+4}+2t_{1}t_{2i+4}-2t_{2i+3}t_{2}+t_{2i+4})}{2(\xi+2t_{1}+1)}; \\ 
p^{a_{+}}_{2i+3} & = & -\frac{\xi(t_{1}t_{2i+4}-t_{2i+3}t_{2}+t_{2i+4})-t_{1}t_{2i+4}+t_{2i+3}t_{2}+t_{2i+4}}{\xi (t_{1}+1)+t_{1}-1}; 
\end{array} 
\end{equation}
for $i = 0,\dots,n-2$, $p^{a_{+}}_{2n} = \frac{1+\xi}{2}$, and 
\stepcounter{thm}
\begin{equation}\label{eq-}
\begin{array}{lll}
p^{a_{-}}_0 & = & \frac{(\xi-1)^2t_{2}}{2(\xi-2t_{1}-1)}; \\ 
p^{a_{-}}_1 & = & -\frac{(\xi-1)t_{2}}{\xi-2t_{1}-1}; \\ 
p^{a_{-}}_{2i+2} & = & \frac{(\xi-1)(\xi t_{2i+4}-2t_{1}t_{2i+4}+2t_{2i+3}t_{2}-t_{2i+4})}{2(\xi-2t_{1}-1)}; \\ 
p^{a_{-}}_{2i+3} & = & -\frac{\xi(t_{1}t_{2i+4}-t_{2i+3}t_{2}+t_{2i+4})+t_{1}t_{2i+4}-t_{2i+3}t_{2}-t_{2i+4}}{\xi (t_{1}+1)-t_{1}+1}; 
\end{array} 
\end{equation}
for $i = 0,\dots,n-2$, $p^{a_{-}}_{2n} = \frac{1-\xi}{2}$. Now, consider the line
$$
L = \left\langle p^{a_{+}},p^{a_{-}}\right\rangle
$$
and write parametrically
$$
L_i = p^{a_{+}}_i+\lambda(p^{a_{-}}_i-p^{a_{+}}_i)
$$
with $\lambda\in k$, $i = 0,\dots,2n$. Substituting $x_i = L_i$ in $F = x_0^3+\dots+x_{2n}^3+1$ we get a polynomial $F(\lambda)$ having three roots:
$$
\lambda_1 = 0,\: \lambda_2 = 1,\: \lambda_3 = \frac{N_{2n}\xi + M_{2n}}{D_{2n}}.
$$
We will now describe the polynomials $N_{2n},M_{2n},D_{2n}$. We have
$$
\begin{footnotesize}
\begin{array}{lll}
D_2 & = & 2(t_{1}^4 - 2t_{1}t_{2}^3 + 2t_{1}^3 - t_{2}^3 + 3t_{1}^2 + 2t_{1} + 1);\\
D_4 & = & 2(t_1^4t_4^3 - 3t_1^3t_2t_3t_4^2 + 3t_1^2t_2^2t_3^2t_4 - t_1t_2^3t_3^3 + 2t_1^3t_4^3 - 3t_1^2t_2t_3t_4^2 + t_2^3t_3^3 + 3t_1^2t_4^3 - 3t_1t_2t_3t_4^2 - t_1^4+ 2t_1t_2^3 + 2t_1t_4^3 - 2t_1^3 + t_2^3 \\ 
 &  &  + t_4^3- 3t_1^2 - 2t_1 - 1).
\end{array} 
\end{footnotesize}
$$
Set
$$
\begin{footnotesize}
\begin{array}{lll}
D_{2i}' & = & 2(t_{1}^4t_{2i}^3 - 3t_{1}^3t_{2i-1}t_{2}t_{2i}^2 + 3t_{1}^2t_{2i-1}^2t_{2}^2t_{2i} - t_{1}t_{2i-1}^3t_{2}^3 + 2t_{1}^3t_{2i}^3 - 3t_{1}^2t_{2i-1}t_{2}t_{2i}^2+ t_{2i-1}^3t_{2}^3 + 3t_{1}^2t_{2i}^3 - 3t_{1}t_{2i-1}t_{2}t_{2i}^2\\
 & & + 2t_{1}t_{2i}^3 + t_{2i}^3 ).
\end{array} 
\end{footnotesize}
$$
Then $D_{2n}$ is given recursively by the formula
\stepcounter{thm}
\begin{equation}\label{den}
D_{2i} = D_{2i}' + D_{2(i-1)}
\end{equation}
for $3\leq i\leq n$. Furthermore
$$
\begin{footnotesize}
\begin{array}{lll}
N_2 & = & -t_{1}^4 - 2t_{1}^3 + t_{2}^3 - 3t_{1}^2 - 2t_{1} - 1;\\ 
M_2 & = & t_{1}^4 - 2t_{1}t_{2}^3 + 2t_{1}^3 - t_{2}^3 + 3t_{1}^2 + 2t_{1} + 1;\\
N_4 & = & 3t_1^2 + t_1^4 + 2t_1 - t_2^3 + 2t_1^3 - t_4^3 - 2t_2^2t_3^2t_4 + 2t_2t_3t_4^2 + t_1t_2^3t_3^3 - 4t_1t_2^2t_3^2t_4 - 3t_1^2t_2^2t_3^2t_4+ 5t_1^2t_2t_3t_4^2 + 5t_1t_2t_3t_4^2 + 3t_1^3t_2t_3t_4^2  \\ 
 & & + t_2^3t_3^3 - t_1^4t_4^3- 2t_1^3t_4^3 - 3t_1^2t_4^3 - 2t_1t_4^3 + 1;\\
M_4 & = & -t_1t_2^3t_3^3 - t_1^4 + 2t_1t_2^3 - 2t_1^3 + t_2^3 - 3t_1^2 - 2t_1 + t_1^4t_4^3 + 2t_1^3t_4^3 + t_2^3t_3^3 + 3t_1^2t_4^3 + 2t_1t_4^3- 3t_1^3t_2t_3t_4^2 + 3t_1^2t_2^2t_3^2t_4 - 3t_1^2t_2t_3t_4^2\\ 
 & &  - 3t_1t_2t_3t_4^2 + t_4^3-1.
\end{array} 
\end{footnotesize}
$$
Set
$$
\begin{footnotesize}
\begin{array}{lll}
N_{2i}' & = & t_2^3t_{2i-1}^3-2t_1^3t_{2i}^3 - t_1^4t_{2i}^3 - 2t_1t_{2i}^3 - 3t_1^2t_{2i}^3 - t_{2i}^3 + t_1t_2^3t_{2i-1}^3 + 2t_2t_{2i-1}t_{2i}^2 - 2t_2^2t_{2i-1}^2t_{2i}+ 3t_1^3t_2t_{2i-1}t_{2i}^2 + 5t_1^2t_2t_{2i-1}t_{2i}^2  \\
 & &  + 5t_1t_2t_{2i-1}t_{2i}^2 - 3t_1^2t_2^2t_{2i-1}^2t_{2i} - 4t_1t_2^2t_{2i-1}^2t_{2i};\\ 
M_{2i}' & = & t_{2i}^3 - 3t_1^3t_2t_{2i-1}t_{2i}^2 + 3t_1^2t_2^2t_{2i-1}^2t_{2i} - t_1t_2^3t_{2i-1}^3 - 3t_1^2t_2t_{2i-1}t_{2i}^2 - 3t_1t_2t_{2i-1}t_{2i}^2 + t_1^4t_{2i}^3+ 2t_1^3t_{2i}^3 + t_2^3t_{2i-1}^3 \\
 & & + 3t_1^2t_{2i}^3 + 2t_1t_{2i}^3.
\end{array} 
\end{footnotesize}
$$
Then $N_{2n}$ and $M_{2n}$ are given recursively as follows
\stepcounter{thm}
\begin{equation}\label{num}
\begin{array}{l}
N_{2i} = N_{2i}' + N_{2(i-1)}; \\ 
M_{2i} = M_{2i}' + M_{2(i-1)};
\end{array} 
\end{equation}
for $3\leq i\leq n$.

\begin{Proposition}\label{deg8}
Let $\overline{\phi}_{2n}:\mathbb{P}^{2n}\dasharrow X^{2n}\subset\mathbb{P}^{2n+1}$ be the map induced by the parametrization
$$
\begin{array}{cccc}
 \widetilde{\phi}: & \mathbb{A}^{2n} & \dasharrow & X^{2n}\subset\mathbb{P}^{2n+1}\\
  & (t_{1},\dots,t_{2n}) & \mapsto & [\widetilde{\phi}_0:\dots:\widetilde{\phi}_{2n+1}]
\end{array}
$$
where 
$$
\widetilde{\phi}_i = p^{a_{+}}_i+\frac{N_{2n}\xi + M_{2n}}{D_{2n}}(p^{a_{-}}_i-p^{a_{+}}_i)
$$ 
for $i = 0,\dots,2n$, and $\widetilde{\phi}_{2n+1} = 1$. Denote by $\mathcal{L}_{\overline{\phi}_{2n}} = |\overline{\phi}_{2n}^{*}\mathcal{O}_{X^{2n}}(1)|\subset|\mathcal{O}_{\mathbb{P}^{2n}}(d)|$ the linear system associated to $\overline{\phi}_{2n}$. Then $\overline{\phi}_{2n}$ is birational, and $d = 4$ when $n = 1$ while for $n\geq 2$ we have $d = 8$.
\end{Proposition}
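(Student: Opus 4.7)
My plan is to deduce the birationality of $\overline{\phi}_{2n}$ from Proposition \ref{p1} and then to compute the degree $d$ by direct analysis of the explicit formula for $\widetilde{\phi}$. Birationality is essentially free: the coordinates $(t_{1},\dots,t_{2n})$ parametrize the open affine chart of $\mathbb{G}(1,n+1)$ consisting of $(n+1)$-planes $H\supset\Lambda$ transverse to $\{x_{1}=x_{2n+1}=0\}$, and under this identification $\widetilde{\phi}$ is the composition of the chart isomorphism with the birational map $\phi$ of Proposition \ref{p1}, hence birational.

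For the degree I would first put all $\widetilde{\phi}_{i}$ on a common denominator. A direct verification shows $\beta_{\pm}=\tfrac{1\pm\xi}{2}\alpha_{\pm}$, where $\alpha_{\pm}=\xi\pm(2t_{1}+1)$ and $\beta_{\pm}=(t_{1}+1)\xi\pm(t_{1}-1)$ are the denominators appearing in \eqref{eq+}, \eqref{eq-}. Consequently each $p^{a_{\pm}}_{i}$ can be written with denominator $\alpha_{\pm}$, say $p^{a_{\pm}}_{i}=A_{\pm}^{(i)}/\alpha_{\pm}$ with $\deg A_{\pm}^{(i)}\in\{1,2\}$, and the common denominator of the $\widetilde{\phi}_{i}$ becomes $Q_{0}=D_{2n}\alpha_{+}\alpha_{-}$, of degree $6$ for $n=1$ and $9$ for $n\geq 2$ after homogenization.

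The decisive step is the pair of divisibilities
$$\alpha_{+}\mid D_{2n}-N_{2n}\xi-M_{2n},\qquad \alpha_{-}\mid N_{2n}\xi+M_{2n}$$
in $k(\xi)[t_{1},\dots,t_{2n}]$. For $n=1$ this follows from substituting $t_{1}=(-1\mp\xi)/2$ and using $t_{1}^{2}+t_{1}+1=0$. For $n\geq 2$ I would argue by induction on $n$ through the recursions \eqref{den}, \eqref{num}, first checking the analogous divisibilities for the incremental polynomials $D_{2i}',N_{2i}',M_{2i}'$ (again by direct substitution) and the base case $n=2$. Granted this, one writes $P_{i}=\alpha_{+}\alpha_{-}R_{i}$ with $R_{i}=E\cdot A_{+}^{(i)}+F\cdot A_{-}^{(i)}$, where $E=(D_{2n}-N_{2n}\xi-M_{2n})/\alpha_{+}$ and $F=(N_{2n}\xi+M_{2n})/\alpha_{-}$, and cancels $\alpha_{+}\alpha_{-}$ from the tuple $(P_{0},\dots,P_{2n},Q_{0})$.

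The degree count then becomes straightforward. For $n=1$: $\deg D_{2}=4$, $\deg E=\deg F=3$ and $\deg A_{\pm}^{(i)}=1$ for all $i$, so every entry has degree $4$ and $d=4$, matching Proposition \ref{BiRP2}. For $n\geq 2$: $\deg D_{2n}=7$, $\deg E=\deg F=6$, while $\deg A_{\pm}^{(i)}=1$ for $i\in\{0,1,2n\}$ and $2$ for the remaining $i$; the maximum is $6+2=8$, and after homogenizing the lower-degree entries with the appropriate powers of $t_{0}$ we obtain a projective map of degree $d=8$. I expect the main obstacle to be the inductive verification of the divisibility statement, and a secondary check is to rule out any further common factor after the cancellation of $\alpha_{+}\alpha_{-}$ — this would drop $d$ below the claimed value, but it is ruled out by birationality onto the irreducible hypersurface $X^{2n}$.
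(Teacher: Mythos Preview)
Your approach is essentially the paper's own: reduce birationality to Proposition~\ref{p1} via the affine chart of $\mathbb{G}(1,n+1)$, then compute $d$ by clearing denominators in $\widetilde{\phi}_i$ and cancelling the factor $t_1^2+t_1+1$. Since $\alpha_+\alpha_-=-4(t_1^2+t_1+1)$, your pair of divisibilities $\alpha_+\mid D_{2n}-N_{2n}\xi-M_{2n}$ and $\alpha_-\mid N_{2n}\xi+M_{2n}$ over $k(\xi)[t_1,\dots,t_{2n}]$ is just a slightly finer packaging of the same cancellation the paper performs in one stroke; the resulting degree count ($7$ for $D_{2n}$, $6$ for $E,F$, numerators of degree $7$ or $8$, homogenize to $8$) matches the paper exactly.

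One point to correct: your final sentence, that birationality onto $X^{2n}$ rules out any further common factor among the $(R_0,\dots,R_{2n},D_{2n})$, is not a valid argument. Birationality is preserved under scaling all components by a common polynomial, so it says nothing about $d$. (The paper does not address this point either; it simply asserts the degrees after one cancellation.) If you want to close this, argue directly: any further common factor would have to divide $D_{2n}$ (the last component), and then check that it cannot divide, say, $R_{2n}=E A_{2n}^{+}+F A_{2n}^{-}$ as well.
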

\begin{proof}
The affine space $\mathbb{A}^{2n}$ with coordinates $(t_{1},\dots,t_{2n})$ is nothing but an affine chart of the Grassmannian $\mathbb{G}(1,n+1)$, and also a chart of the projective space $\mathbb{P}^{2n}$ with homogeneous coordinates $[t_0:\dots :t_{2n}]$. By the construction in the first part of Section \ref{bri} we have the following commutative diagram
$$
\begin{tikzcd}
\mathbb{A}^{2n} \arrow[rr, hook] \arrow[d, hook] \arrow[rrd, "\widetilde{\phi}", dashed] &  & {\mathbb{G}(1,n+1)} \arrow[d, "\phi", dashed] \\
\mathbb{P}^{2n} \arrow[rr, "\overline{\phi}_{2n}", dashed]                                    &  & X^{2n}                                       
\end{tikzcd}
$$
and hence the birationality of $\overline{\phi}_{2n}$ follows from that of $\phi$ which in turn comes from Proposition \ref{p1}.

First consider the case $n\geq 2$. Note that by (\ref{eq+}), (\ref{eq-}), (\ref{den}), (\ref{num}) the polynomial $t_{1}^2+t_{1}+1$ is a factor of both the numerator and the denominator of $\widetilde{\phi}_i$ for $i = 0,\dots,2n-1$. Hence, after clearing this common factor we have $\widetilde{\phi}_i = \frac{\alpha_i}{\beta_i}$ where $\deg(\alpha_1) = \deg(\beta_1) = \deg(\alpha_1) = \deg(\beta_1) = 7$, $\deg(\alpha_j) = 8$ and $\deg(\beta_j) = 7$ for $j = 2,\dots,2n-1$. Furthermore, $\deg(\alpha_{2n}) = \deg(\beta_{2n}) = 7$ and $\beta_i = \frac{D_{2n}}{2}$ for $i = 0,\dots,2n$. To get the expression of $\overline{\phi}_{2n}$ we multiply the vector $(\widetilde{\phi}_0,\dots,\widetilde{\phi}_{2n},1)$ by $\frac{D_{2n}}{2}$ and homogeneize, using the new variable $t_0$, the vector so obtained in order to get a vector whose entries are homogeneous polynomials of degree eight. 

For $n = 1$ we have that $\deg(\alpha_i) = \deg(\beta_i) = 4$ for $i = 0,1,2$, and hence $\overline{\phi}_{2n}$ is induced by a linear system of quartics.
\end{proof}

The geometric construction in Section \ref{gA} and the algebraic construction in Section \ref{res_scal} are strongly related. After all both of them boil down to taking a line through a pair of points lying in disjoint conjugate linear subspaces. We will now make this statement precise by introducing special Cremona transformations of $\mathbb{P}^{2n}$. 

Consider the following subschemes
$$
\begin{array}{llll}
T_1 & = & \{t_0^2+t_0t_1+t_1^2 = t_2 = 0\}; & \\ 
T_2 & = & \{t_0 = t_1 = 0\}; & \\ 
T_3 & = & \{t_0 = t_{2i+1} = 0\} & \text{for } i = 0,\dots,n-1;\\ 
T_4 & = & \{t_0 = t_{2i}t_{2j+1}-t_{2i-1}t_{2j+2} = 0\} & \text{for } i = 1,\dots,n-1;\: i\leq j\leq n-1;
\end{array} 
$$
in $\mathbb{P}^{2n}_{(t_0,\dots,t_{2n})}$, and the following subschemes
$$
\begin{array}{llll}
U_1 & = & \{u_1 = u_2 = 0\};
\end{array} 
$$
and
$$
U_2 = \left\lbrace
\begin{array}{llll}
u_0 & = & 0; &  \\ 
u_{2i+1}^2 + 3u_{2i+2}^2 & = & 0 & \text{for } i = 0,\dots,n;\\ 
u_{2i+1}u_{2j+1} + 3u_{2i+2}u_{2j+2} & = & 0 & \text{for } i = 0,\dots,n-2;\: i < j \leq n-1;\\ 
u_{2i}u_{2j+1}-u_{2i-1}u_{2j+1} & = & 0 & \text{for } i = 1,\dots,n-1;\: i \leq j \leq n-1;
\end{array}\right.
$$
in $\mathbb{P}^{2n}_{(u_0,\dots,u_{2n})}$. Furthermore, set
\stepcounter{thm}
\begin{equation}\label{alp}
\begin{array}{lll}
\alpha_0 & = & -2t_0^3 - 2t_0^2t_1 - 2t_0t_1^2;\\
\alpha_1 & = &  2t_0^2t_2 + t_0t_1t_2;\\
\alpha_{2i} & = & t_0t_2t_{2i-1};\\       
\alpha_{2j+1} & = & -t_0t_2t_{2j+1}-2t_1t_2t_{2j+1}+2t_0^2t_{2j+2}+2t_1^2t_{2j+2}+2t_0t_1t_{2j+2}; 
\end{array}
\end{equation}
for $i = 1,\dots,n$, $j = 1,\dots,n-1$ and
\stepcounter{thm}
\begin{equation}\label{bet}
\begin{array}{lll}
\beta_0 & = & -u_0u_1 + u_0u_2;\\
\beta_{2i+1} & = & -2u_0u_{2i+2};\\
\beta_2 & = & u_1^2 + 3u_2^2;\\
\beta_{2j+4} & = & u_1u_{2j+3} - u_2u_{2j+3} + u_1u_{2j+4} + 3u_2u_{2j+4};
\end{array}
\end{equation}
for $i = 0,\dots,n-1$; $j = 0,\dots,n-2$.

\begin{Proposition}\label{crem}
Let $n\geq 2$ and set 
$$
\mathcal{T}_{2n} = \left\langle \alpha_0,\dots,\alpha_{2n}\right\rangle \subset |\mathcal{O}_{\mathbb{P}^{2n}_{(t_0,\dots,t_{2n})}}(3)| \text{ and } \mathcal{U}_{2n} = \left\langle \beta_0,\dots,\beta_{2n}\right\rangle \subset |\mathcal{O}_{\mathbb{P}^{2n}_{(u_0,\dots,u_{2n})}}(2)|.
$$ 
Then 
$$
\mathcal{T}_{2n} = |\mathcal{I}_{T_1,T_2,2T_3,T_4}(3)|\subset |\mathcal{O}_{\mathbb{P}^{2n}_{(t_0,\dots,t_{2n})}}(3)|
$$
is the linear system of cubic hypersurfaces of $\mathbb{P}^{2n}_{(t_0,\dots,t_{2n})}$ containing $T_1,T_2,T_4$ and vanishing with multiplicity two on $T_3$, and 
$$
\mathcal{U}_{2n} = |\mathcal{I}_{U_1,U_2}(2)|\subset |\mathcal{O}_{\mathbb{P}^{2n}_{(u_0,\dots,u_{2n})}}(2)|
$$
is the linear system of quadric hypersurfaces containing $U_1,U_2$.

Furthermore, the rational map 
$$
\begin{array}{cccc}
 \alpha_{2n}: & \mathbb{P}^{2n}_{(t_0,\dots,t_{2n})} & \dasharrow & \mathbb{P}^{2n}_{(u_0,\dots,u_{2n})}\\
  & (t_{0},\dots,t_{2n}) & \mapsto & [\alpha_0:\dots:\alpha_{2n}]
\end{array}
$$
is birational and the rational map 
$$
\begin{array}{cccc}
 \beta_{2n}: & \mathbb{P}^{2n}_{(u_0,\dots,u_{2n})} & \dasharrow & \mathbb{P}^{2n}_{(t_0,\dots,t_{2n})}\\
  & (u_{0},\dots,u_{2n}) & \mapsto & [\beta_0:\dots:\beta_{2n}]
\end{array}
$$
is the inverse of $\alpha_{2n}$.
\end{Proposition}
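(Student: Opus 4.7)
The plan is to establish the three claims — the two linear system identifications and the mutual inverse relation between $\alpha_{2n}$ and $\beta_{2n}$ — in sequence, leveraging the birationality already proved in Sections \ref{gA} and \ref{res_scal}.

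First I would verify by direct inspection that each cubic $\alpha_i$ in (\ref{alp}) lies in the ideal $\mathcal{I}_{T_1,T_2,2T_3,T_4}$: every term of every $\alpha_i$ contains a factor of $t_0$ or $t_1$ (giving vanishing on $T_2$); vanishing on $T_1$ reduces modulo $(t_2, t_0^2+t_0 t_1+t_1^2)$ to an elementary identity since every monomial surviving $t_2=0$ is either in $(t_0^3, t_0^2 t_1, t_0 t_1^2)$ and a multiple of $t_0^2+t_0t_1+t_1^2$; multiplicity two along $T_3=\{t_0=t_{2i+1}=0\}$ follows by checking that each monomial of each $\alpha_j$ lies in $(t_0,t_{2i+1})^2$; and vanishing on the various $T_4$ loci is an analogous combinatorial check. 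The corresponding verification that each $\beta_j$ in (\ref{bet}) belongs to $H^0(\mathbb{P}^{2n},\mathcal{I}_{U_1,U_2}(2))$ is essentially immediate from the shape of the $\beta_j$.

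Next, to upgrade these inclusions to equalities I would compute $h^0(\mathbb{P}^{2n}, \mathcal{I}_{T_1,T_2,2T_3,T_4}(3))$ and $h^0(\mathbb{P}^{2n}, \mathcal{I}_{U_1,U_2}(2))$ and show that both equal $2n+1$, matching the number of generators exhibited. The cubic count would parallel the proof of Proposition \ref{linsys4}: start from a general cubic in $k[t_0,\dots,t_{2n}]_3$, impose the conditions from $T_1,T_2$ (which force most pure-$t_0,t_1$ monomials to vanish and leave only cubics in the ideal $(t_0,t_1)$), then impose $T_4$ and finally the double condition along each component of $T_3$, and count the surviving linear coefficients. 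For the quadratic system, geometrically $U_2$ is (over the quadratic extension $k(\xi)$ with $\xi^2=-3$) the disjoint union of two conjugate $(n-1)$-planes inside the hyperplane $\{u_0=0\}$, and a standard ideal computation for the union of two skew linear subspaces plus the linear subspace $U_1$ yields the expected $h^0=2n+1$.

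For birationality, the cleanest route — and the one that bypasses what would otherwise be the main obstacle — is to recognize $\alpha_{2n}$ as the composition $\qoppa_{2n}\circ\overline{\phi}_{2n}$ of the Grassmannian parametrization from Proposition \ref{deg8} with the birational projection from Proposition \ref{inv1}. Both maps being birational, their composition is automatically a birational Cremona self-map of $\mathbb{P}^{2n}$ whose inverse is $\overline{\phi}_{2n}^{-1}\circ\overline{\varphi}_{2n}$ up to the automorphism $h$ of Remark \ref{sumup}; one then identifies this inverse with $\beta_{2n}$. The identification step still amounts to a substitution check — plugging the octic parametrization $\overline{\phi}_{2n}$ into the quadrics $\Qoppa_i$ defining $\qoppa_{2n}$ should, after extracting a common factor of degree $13$ (for $n\geq 2$), reproduce the cubics $\alpha_i$ up to a single overall scalar — but this is a mechanical verification rather than the full $\beta_{2n}\circ\alpha_{2n}=\mathrm{id}$ computation. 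The main obstacle I anticipate is organizing the common-factor cancellation in this composition cleanly enough that the degree drop from $16$ to $3$ is transparent; once that is done, birationality of $\alpha_{2n}$ and its explicit inverse $\beta_{2n}$ follow at once from the two rationality constructions already in hand.
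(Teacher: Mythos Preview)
Your treatment of the linear system identifications matches the paper's: the paper simply says these follow from computations analogous to those in the proof of Proposition~\ref{linsys4}, which is exactly the route you outline.

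For the birationality, however, you take a genuinely different path. The paper does not invoke the Grassmannian construction at all here; it simply asserts that the fact that $\beta_{2n}$ is the inverse of $\alpha_{2n}$ follows directly from the formulas (\ref{alp}) and (\ref{bet}). And indeed the direct check is short: substituting the $\alpha_i$ into the $\beta_j$ one finds a common factor $4t_0^2 t_2(t_0^2+t_0t_1+t_1^2)$, after which $[\beta_0(\alpha):\dots:\beta_{2n}(\alpha)] = [t_0:\dots:t_{2n}]$. Your route via recognizing $\alpha_{2n}$ as (an automorphism composed with) $\qoppa_{2n}\circ\overline{\phi}_{2n}$ is valid, but it is both more laborious --- you yourself flag the degree-$16$-to-$3$ cancellation as the main obstacle --- and it inverts the paper's logical order: in the paper, Theorem~\ref{rel_cr} (the commutative diagram relating $\overline{\phi}_{2n}$, $\overline{\varphi}_{2n}$, $\alpha_{2n}$, $\beta_{2n}$) comes \emph{after} Proposition~\ref{crem} and explicitly cites it. So your approach buys a more conceptual explanation of why $\alpha_{2n}$ is birational, at the cost of a harder computation and a reorganization of the dependencies; the paper's direct substitution is far quicker here and leaves the geometric interpretation to the subsequent theorem.
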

\begin{proof}
The first part of the statement follows from computations analogous to those in the proof of Proposition \ref{linsys4}, and the fact that $\beta_{2n}$ is the inverse of $\alpha_{2n}$ follows from (\ref{alp}) and (\ref{bet}). 
\end{proof}

\begin{Remark}\label{n1}
When $n = 1$ the linear system $\mathcal{T}_{2}$ has the line $\{t_0 = 0\}$ as a base components and it can then be reduced to the linear system of conics 
$$
\left\langle -2t_0^2 - 2t_0t_1 - 2t_1^2, 2t_0t_2 + t_1t_2, t_1t_2\right\rangle
$$
inducing a standard Cremona transformation centered in the two conjugate points $[\frac{-1+\xi}{2}:1:0],[\frac{-1-\xi}{2}:1:0]$ and in $[0:0:1]$. Its inverse is the standard Cremona induced by the linear system
$$
\left\langle -u_0u_1 + u_0u_2,-2u_0u_2,u_1^2 + 3u_2^2\right\rangle
$$
of conics passing through the two conjugate points $[0:\xi:1],[0:-\xi:1]$ and $[1:0:0]$.
\end{Remark}

\begin{thm}\label{rel_cr}
The following diagram of rational maps 
$$
\begin{tikzcd}
{\mathbb{P}^{2n}_{(t_0,\dots,t_{2n})}} \arrow[dd, "\alpha_{2n}", dashed, bend left] \arrow[rrrd, "\overline{\phi}_{2n}", dashed]   &  &    &        \\
                                                                                                                                  &  &  & X^{2n} \\
{\mathbb{P}^{2n}_{(u_0,\dots,u_{2n})}} \arrow[uu, "\beta_{2n}", dashed, bend left] \arrow[rrru, "\overline{\varphi}_{2n}", dashed] &  &    &       
\end{tikzcd}
$$
is commutative.
\end{thm}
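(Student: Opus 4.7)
The plan is to prove the commutativity by reducing it to the common geometric content shared by the two rationality constructions. Both $\overline{\varphi}_{2n}$ and $\overline{\phi}_{2n}$ send their parameter to the third intersection point of $X^{2n}$ with a line joining a pair of Galois conjugate points lying, respectively, on the skew half-dimensional linear spaces $H_{+}$ and $H_{-}$. Consequently, to check that $\overline{\varphi}_{2n} \circ \alpha_{2n} = \overline{\phi}_{2n}$ it suffices to show that the Cremona transformation $\alpha_{2n}$ identifies the two natural parametrizations of such conjugate pairs: the one coming from restriction of scalars (the points $x^{a_{\pm}}(u)$ in Section \ref{res_scal}) and the one coming from sweeping $(n+1)$-planes through the fixed $(n-1)$-plane $\Lambda$ (the points $p^{a_{\pm}}(t)$ in formulas (\ref{eq+}) and (\ref{eq-})).

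First I would substitute $u_i = \alpha_i(t)$ into the homogenized formulas for $x^{a_{+}}(u)$ and $x^{a_{-}}(u)$ taken from Section \ref{res_scal}. The specific form of the polynomials $\alpha_i$ in (\ref{alp})---in particular the presence of the factor $-(t_0^2 + t_0 t_1 + t_1^2)$ in $\alpha_0$, of $t_2(2t_0 + t_1)$ in $\alpha_1$, and of $t_0 t_2 t_{2i-1}$, $t_0 t_2 t_{2j+2}$ and related expressions in the higher coordinates---has been arranged precisely to clear the denominators $\xi + 2t_1 + 1$ and $\xi(t_1+1) + t_1 - 1$ appearing in (\ref{eq+}) and (\ref{eq-}).

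Next I would verify, by direct comparison, the projective identities
\[
x^{a_{+}}(\alpha(t)) \;=\; \lambda(t)\, p^{a_{+}}(t), \qquad x^{a_{-}}(\alpha(t)) \;=\; \lambda(t)\, p^{a_{-}}(t),
\]
for a common scalar $\lambda(t) \in k[t_0,\ldots,t_{2n}]$. Since the two conjugate pairs then span the same projective line, and the third intersection of that line with $X^{2n}$ is uniquely determined, this forces $\overline{\varphi}_{2n}(\alpha_{2n}(t)) = \overline{\phi}_{2n}(t)$ as points of $X^{2n}$, and everything is defined over $k$ because $\alpha_{2n}$ is. Conjugating $\xi \mapsto -\xi$ reduces the two identities to a single one, so only one of them has to be checked by hand.

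The main obstacle is the identity $x^{a_{+}}(\alpha(t)) \sim p^{a_{+}}(t)$ itself: it is a polynomial identity in $t_0,\ldots,t_{2n}$ with coefficients in $k(\xi)$, and although in principle routine, the entries of $p^{a_{+}}(t)$ contain $\xi$ in the denominators in a slightly subtle way. The cleanest way to organize the verification is to split every expression along the $k$-basis $\{1,\xi\}$ of $k(\xi)$, writing $x^{a_{+}}(\alpha(t)) = X(t) + \xi Y(t)$ and, after multiplying through by the denominators $2(\xi + 2t_1 + 1)(\xi(t_1+1) + t_1 - 1)$ and using $\xi^2 = -3$, writing the cleared $p^{a_{+}}(t)$ as $P(t) + \xi Q(t)$. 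The claim then reduces to two polynomial identities $X = \lambda P$ and $Y = \lambda Q$ over $k$, which can be checked coordinate by coordinate using the recursive structure of the indices and the fact that the only genuinely new information as $n$ grows is the symmetric appearance of the pairs $(t_{2i-1}, t_{2i})$ in $\alpha$ and of $(u_{2i-1}, u_{2i})$ in $x^{a_{\pm}}$. Granted this polynomial identity, the commutativity of the triangle is immediate.
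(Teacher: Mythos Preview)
Your proposal is correct and follows essentially the same approach as the paper: both reduce the commutativity to the identity $x^{a_{\pm}}(\alpha(t)) = p^{a_{\pm}}(t)$, so that the two constructions pick out the same secant line through $H_{+}$ and $H_{-}$ and hence the same third point on $X^{2n}$. The only difference is the direction of the computation: you substitute the already-defined $\alpha_i$ into $x^{a_{\pm}}$ and verify the match with $p^{a_{\pm}}$, whereas the paper treats $x^{a_{\pm}}(u) = p^{a_{\pm}}(t)$ as a linear system in the $u_i$, solves it explicitly, and recognizes the solution as the cubics in (\ref{alp}); this latter route has the small advantage that it simultaneously explains where the formulas for $\alpha_{2n}$ come from. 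Note also that since both $x^{a_{\pm}}$ and $p^{a_{\pm}}$ are written in the affine chart $x_{2n+1}=1$ with matching $2n$-th coordinate $\tfrac{1+\xi}{2}$, the projective identity you seek is in fact an equality (your scalar $\lambda$ is $1$), which slightly simplifies the check.
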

\begin{proof}
The map $\overline{\phi}_{2n}$ associates to the points $p^{a_{+}},p^{a_{-}}$ in (\ref{eq+}), (\ref{eq-}) the third intersection point of the line $\left\langle p^{a_{+}},p^{a_{-}}\right\rangle$ and $X^{2n}$, while the map $\overline{\varphi}_{2n}$ associates to the points $x^{a_{+}},x^{a_{-}}$ in Section \ref{res_scal} the third intersection point of the line $\left\langle x^{a_{+}},x^{a_{-}}\right\rangle$ and $X^{2n}$. We will now work out the relation between these two third intersections points. The equalities $x^{a_{+}} = p^{a_{+}}$, $x^{a_{-}} = p^{a_{-}}$ correspond to the following linear system in $u_0,\dots,u_{2n}$:
$$
\left\lbrace\begin{array}{lll}
\frac{u_1-3u_2}{2}+\xi\frac{u_1+u_2}{2} + \frac{(\xi+1)^2t_{2}}{2(\xi+2t_{1}+1)} & = & 0; \\ 
u_1 + \xi u_2 + \frac{(\xi+1)t_{2}}{\xi+2t_{1}+1} & = & 0; \\ 
\frac{u_{2i+3}-3u_{2i+4}}{2}+\xi\frac{u_{2i+3}+u_{2i+4}}{2} + \frac{(\xi+1)(\xi t_{2i+4}+2t_{1}t_{2i+4}-2t_{2i+3}t_{2}+t_{2i+4})}{2(\xi+2t_{1}+1)}  & = & 0; \\ 
u_{2i+3} + \xi u_{2i+4} + \frac{\xi(t_{1}t_{2i+4}-t_{2i+3}t_{2}+t_{2i+4})-t_{1}t_{2i+4}+t_{2i+3}t_{2}+t_{2i+4}}{\xi (t_{1}+1)+t_{1}-1} & = & 0;\\
\frac{u_1-3u_2}{2}-\xi\frac{u_1+u_2}{2} - \frac{(\xi-1)^2t_{2}}{2(\xi-2t_{1}-1)} & = & 0; \\ 
u_1 - \xi u_2 + \frac{(\xi-1)t_{2}}{\xi-2t_{1}-1} & = & 0; \\ 
\frac{u_{2i+3}-3u_{2i+4}}{2}-\xi\frac{u_{2i+3}+u_{2i+4}}{2} - \frac{(\xi-1)(\xi t_{2i+4}-2t_{1}t_{2i+4}+2t_{2i+3}t_{2}-t_{2i+4})}{2(\xi-2t_{1}-1)}  & = & 0; \\ 
u_{2i+3} - \xi u_{2i+4} + \frac{\xi(t_{1}t_{2i+4}-t_{2i+3}t_{2}+t_{2i+4})+t_{1}t_{2i+4}-t_{2i+3}t_{2}-t_{2i+4}}{\xi (t_{1}+1)-t_{1}+1} & = & 0;
\end{array}\right.
$$
for $i = 0,\dots,n-2$. Solving this system with respect to $u_0,\dots,u_{2n}$ we get 
$$
\left\lbrace\begin{array}{lll}
u_1 & = & -\frac{(2+t_1)t_2}{2(t_1^2+t_1+1)};\\
u_{2i} & = & -\frac{t_2t_{2i-1}}{2(t_1^2+t_1+1)};\\
u_{2j+1} & = & -\frac{2(t_1^2t_{2j+2}-t_1t_2t_{2j+1}+t_1t_{2j+2}+t_{2j+2})-t_2t_{2j+1}}{2(t_1^2+t_1+1)};
\end{array}\right.
$$
for $i = 1,\dots,n$, $j = 1,\dots,n-1$, which yield exactly the cubics in (\ref{alp}). Hence $\overline{\varphi}_{2n}\circ \alpha_{2n} = \overline{\phi}_{2n}$ and then the commutativity of the diagram in the statement follows from Proposition \ref{crem}.
\end{proof}

\section{Degenerating degree five del Pezzo and projected K3 surfaces}\label{sec_deg}

In this section we investigate how Fano's rationality construction for cubic $4$-folds containing a del Pezzo surface of degree five behaves in families and in particular how it specializes to our rationality construction for the Fermat cubic $4$-fold in Section \ref{gA}. Most computations in this section can not be carried out by hand, a Magma library containing all the needed scripts can be downloaded at the following link:
\begin{center}
\url{https://github.com/msslxa/Fermat-Cubics}
\end{center}
which contains also several functions to compute the birational parametrizations and their inverses described in the previous sections. 

Let $k(t)$ be the function field in one variable of the base field $k$ and $\mathbb{P}^2_{k(t)}$ the projective plane with homogeneous coordinates $u_0,u_1,u_2$ over $k(t)$. Set
\begin{small}
$$
\begin{array}{lll}
\sigma_0 & = & -u_0^3u_1 - u_1^4 - 3u_0^3u_2 - 6u_1^2u_2^2 - 9u_2^4; \\ 
\sigma_1 & = & u_0^3u_1 + u_1^4 - 3u_0^3u_2 + 6u_1^2u_2^2 + 9u_2^4; \\ 
\sigma_2 & = & -u_0^4 - u_0u_1^3 + 3u_0u_1^2u_2 - 3u_0u_1u_2^2 + 9u_0u_2^3; \\ 
\sigma_3 & = & u_0^4 + u_0u_1^3 + 3u_0u_1^2u_2 + 3u_0u_1u_2^2 + 9u_0u_2^3; \\ 
\sigma_4 & = & t(u_0^4 - u_1^4 - 6u_1^2u_2^2 - 9u_2^4); \\ 
\sigma_5 & = & t(u_0^3u_1 + u_1^4 + 6u_1^2u_2^2 + 9u_2^4); \\ 
\sigma_6 & = & t(u_0^2u_1^2 - u_1^4 + 3u_0^2u_2^2 - 6u_1^2u_2^2 - 9u_2^4); \\ 
\sigma_7 & = & t(u_0u_1^3 + u_1^4 + 3u_0u_1u_2^2 + 6u_1^2u_2^2 + 9u_2^4); \\ 
\sigma_8 & = & tu_0^3u_2; \\ 
\sigma_9 & = & t(u_0u_1^2u_2 + 3u_0u_2^3);
\end{array} 
$$
\end{small}
and consider the map
$$
\begin{array}{cccc}
 \sigma: & \mathbb{P}^{2}_{k(t)} & \dasharrow & \mathbb{P}^{9}_{k(t)}\\
  & [u_0: u_1 :u_2] & \mapsto & [\sigma_0:\dots:\sigma_{9}].
\end{array}
$$
Note that $\sigma_0,\dots,\sigma_3$ are exactly the sections of the linear system $\mathcal{L}_{\overrightarrow{2p}_{1,\pm}^{L_{1,\pm}},p_{2,\pm},p_3}^{4}$ inducing the birational paramatrization of the Fermat cubic surface $X^2$ in Proposition \ref{BiRP2}. The polynomials $\sigma_4,\dots,\sigma_9$ are sections of the linear system $\mathcal{L}_{\overrightarrow{2p}_{1,\pm}^{L_{1,\pm}},p_3}^{4}$ which yields a map $\mathbb{P}^2\dasharrow\mathbb{P}^5$ whose image closure is a smooth del Pezzo surface of degree five.

Now, fix homogeneous coordinates $z_0,\dots,z_9$ on $\mathbb{P}^{9}_{k(t)}$, $x_0,\dots,x_5$ on $\mathbb{P}^{5}_{k(t)}$, consider the linear projection 
$$
\begin{array}{cccc}
 pr: & \mathbb{P}^{9}_{k(t)} & \dasharrow & \mathbb{P}^{5}_{k(t)}\\
  & [z_{0}:\dots :z_{9}] & \mapsto & [z_0:z_2:z_1:z_3:z_4:z_6]
\end{array}
$$
and set $\mathcal{S}_t := \overline{(pr\circ\sigma)(\mathbb{P}^{2}_{k(t)})}\subset \mathbb{P}^{5}_{k(t)}$. Furthermore, set
$$
F_t = t(x_1x_4^2  + x_2x_4x_5 - x_1x_5^2 + x_2x_5^2) + x_0^3 + x_1^3 + x_2^3 + x_3^3 + x_4^3 + x_5^3
$$
and 
\begin{small}
$$
\begin{array}{lll}
\Qoppa_{0,t} & = & \frac{1}{3}t(-x_0^2 + 2x_0x_1 + x_0x_2 - x_0x_3 - x_1x_2 - x_2^2 - x_2x_3) + x_0x_4 - x_1x_5;\\
\Qoppa_{1,t} & = & \frac{1}{3}t(-x_0^2 + x_0x_2 + x_1^2 - x_1x_3 - x_2^2 + x_3^2) + x_0x_5 + x_1x_4 - x_1x_5;\\
\Qoppa_{2,t} & = & \frac{1}{3}t(x_0^2 + x_0x_1 - x_0x_2 + x_0x_3 + x_1x_2 + x_2^2 - 2x_2x_3) + x_2x_4 - x_3x_5;\\
\Qoppa_{3,t} & = & \frac{1}{3}t(x_0^2 - x_0x_2 - x_1^2 + x_1x_3 + x_2^2 - x_3^2) + x_2x_5 + x_3x_4 - x_3x_5;\\
\Qoppa_{4,t} & = & t(x_1x_4 - x_1x_5 + x_2x_5) + x_4^2 - x_4x_5 + x_5^2;
\end{array} 
$$
\end{small}
and consider the smooth cubic $4$-fold
$$
X_t^4 = \{F_t = 0\}\subset \mathbb{P}^{5}_{k(t)}.
$$
Then 
\stepcounter{thm}
\begin{equation}\label{dP5t}
\mathcal{S}_t = \{\Qoppa_{0,t} = \dots = \Qoppa_{4,t} = F_t = 0\}\subset X_t^4 \subset \mathbb{P}^{5}_{k(t)}
\end{equation}
is a smooth degree five del Pezzo surface over $k(t)$. The equation of $X_t^4$ is not necessary to define $\mathcal{S}_t$. Indeed, in order to define $\mathcal{S}_t$ scheme theoretically $\qoppa_{0,t},\dots,\qoppa_{4,t}$ are sufficient. 

However, we want now to consider $\mathcal{S}_t$ as a family $\pi:\mathcal{S}_t\rightarrow \mathbb{A}^1_{k}$ of degree five del Pezzo surfaces over $k$. From this points of view the cubic polynomial $F_0$ is required in order to cut out the fiber $S_0 = \pi^{-1}(0)$. We have that 
$$
S_0 = X^2 \cup H^2_{\pm}
$$ 
where $X^2 = \{x_0^3+x_1^3+x_2^3+x_3^3 = x_4 = x_5 = 0\}\subset \mathbb{P}^{5}_{k}$ is the Fermat cubic surface and 
$$
H^2_{\pm} = 
\left\lbrace
\begin{array}{lll}
x_0^2 - x_0x_1 + x_1^2 = x_2^2 - x_2x_3 + x_3^2 = x_4^2 - x_4x_5 + x_5^2 & = & 0;\\
x_0x_2 - x_0x_3 + x_1x_3 = x_1x_4 + x_0x_5 - x_1x_5 = x_3x_4 + x_2x_5 - x_3x_5 & = & 0;\\
x_1x_2 - x_0x_3 = x_0x_4 - x_1x_5 = x_2x_4 - x_3x_5 & = & 0;
\end{array}\right.
$$
is a degree two surface which geometrically is the union of two skew planes intersecting $X^2$ in two conjugate lines. 

Therefore, $\mathcal{S}_t$ is a family of degree five del Pezzo surfaces specializing to the reducible surface $S_0 = X^2 \cup H^2_{\pm}$, and furthermore $X_t^4$ yields a family of cubic $4$-folds containing $S_t = \pi^{-1}(t)$ and specializing to the Fermat cubic $4$-fold $X_0^4\supset S_0$. Note that the map
$$
\begin{array}{cccc}
 \qoppa_t: & X^4_t & \dasharrow & \mathbb{P}^{4}_{k(t)}\\
  & [x_0: \dots :x_5] & \mapsto & [\Qoppa_{0,t}:\dots:\Qoppa_{4,t}]
\end{array}
$$
specializes exactly to the map $\qoppa_{2n} = \qoppa_0: X^4_0\dasharrow\mathbb{P}^{4}$ in the proof of Proposition \ref{inv1}. We will now analyze the family of inverse maps. The inverse of $\qoppa_t$ has the following form 
$$
\begin{array}{cccc}
 \varphi_t: & \mathbb{P}^{4}_{k(t)} & \dasharrow & X^4_t\\
  & [u_0: \dots :u_4] & \mapsto & [t\varphi_{0,t}+\varphi_{0,0}:\dots:t\varphi_{5,t}+\varphi_{5,0}]
\end{array}
$$
where
\begin{small} 
$$
\begin{array}{lll}
\varphi_{0,0}  & = &  -u_0^4 + 2u_0^3u_1 - 3u_0^2u_1^2 + 2u_0u_1^3 - u_1^4 - u_0u_2^3 - u_1u_2^3 + 3u_0u_2^2u_3 - 3u_0u_2u_3^2 + 2u_0u_3^3 - u_1u_3^3 + 2u_0u_4^3 - u_1u_4^3;\\
\varphi_{1,0}  & = &  u_0^4 - 2u_0^3u_1 + 3u_0^2u_1^2 - 2u_0u_1^3 + u_1^4 + u_0u_2^3 - 2u_1u_2^3 + 3u_1u_2^2u_3 - 3u_1u_2u_3^2 + u_0u_3^3 + u_1u_3^3 + u_0u_4^3 + u_1u_4^3;\\
\varphi_{2,0}  & = &  -u_0^3u_2 + 3u_0^2u_1u_2 - 3u_0u_1^2u_2 + 2u_1^3u_2 - u_2^4 - u_0^3u_3 - u_1^3u_3 + 2u_2^3u_3 - 3u_2^2u_3^2 + 2u_2u_3^3 - u_3^4 + 2u_2u_4^3 - u_3u_4^3;\\
\varphi_{3,0}  & = &  u_0^3u_2 + u_1^3u_2 + u_2^4 - 2u_0^3u_3 + 3u_0^2u_1u_3 - 3u_0u_1^2u_3 + u_1^3u_3 - 2u_2^3u_3 + 3u_2^2u_3^2 - 2u_2u_3^3 + u_3^4 + u_2u_4^3 + u_3u_4^3;\\
\varphi_{4,0}  & = &  -2u_0^3u_4 + 3u_0^2u_1u_4 - 3u_0u_1^2u_4 + u_1^3u_4 - 2u_2^3u_4 + 3u_2^2u_3u_4 - 3u_2u_3^2u_4 + u_3^3u_4 + u_4^4;\\
\varphi_{5,0}  & = &  -u_0^3u_4 - u_1^3u_4 - u_2^3u_4 - u_3^3u_4 - u_4^4;
\end{array} 
$$
\end{small}
and hence $\varphi_t$ specializes to the birational parametrization $\varphi = \varphi_0:\mathbb{P}^{4}\dasharrow X^4_0$ in (\ref{par_rs}). The base locus of $\varphi_t$ yields a family $\mathcal{S}_t'$ of surfaces of degree nine specializing to a surface $S_0'$, where we denote by $S'_t$ the surface of the family $\mathcal{S}_t'$ over $t \in k$. The reduced subscheme of $S_0'$ is the complete intersection 
$$
S'_{0,red} = \{u_0^3 + u_1^3 + u_2^3 + u_3^3 + u_4^3 = u_0^2u_1 - u_0u_1^2 + u_1^3 + u_2^2u_3 - u_2u_3^2 + u_3^3 + u_4^3\}\subset\mathbb{P}^4
$$    
whose singular locus
$$
\Sing(S'_{0,red}) = \{u_0^2 - u_0u_1 + u_1^2 = u_0u_2 - u_0u_3 + u_1u_3 = u_1u_2 - u_0u_3 = u_2^2 - u_2u_3 + u_3^2 = u_4 = 0\}\subset\mathbb{P}^4
$$
is the union of two skew conjugate lines. Note that $S'_{0,red}$ is the scheme $Y^{2n-2}$ in Lemma \ref{Irr} for $n = 2$. The variety $K_0\subset\mathbb{P}^8_{(z_0,\dots,z_8)}$ defined by the following equations
\begin{small}
$$
\left\lbrace\begin{array}{lll}
z_5z_6 - z_4z_7 - z_2z_8 = z_3z_4 - z_1z_6 + z_2z_7 = z_0z_1 + z_3^2 + z_7z_8 = z_2z_4 - z_1z_5 + z_0z_7 = z_0z_5 + z_3z_7 + z_8^2 & = & 0;\\
z_2z_3 - z_4z_8 = z_0z_4 + z_3z_6 = z_0z_2 + z_6z_8 = z_2^2z_4z_6 - z_2^2z_5z_7 - z_4z_5^2z_8 + z_5^3z_8 - z_6z_7^2z_8 + z_7^3z_8 + z_8^4  & = & 0;\\
z_2^2z_5z_6 + z_2^2z_4z_7 - z_2^2z_5z_7 - z_4z_5^2z_8 - z_6z_7^2z_8 = z_4^2 - z_4z_5 + z_5^2 - z_0z_8 = z_1z_4 - z_2z_4 + z_2z_5 - z_0z_6 & = & 0;\\
z_6^2 - z_6z_7 + z_7^2 - z_3z_8 = z_4z_6 - z_4z_7 + z_5z_7 - z_1z_8 = z_3z_5 - z_2z_6 - z_1z_7 + z_2z_7 & = & 0;\\
z_1^2 - z_1z_2 + z_2^2 - z_0z_3 = z_0^2 + z_1z_3 - z_4z_8 + z_5z_8 & = & 0;
\end{array}\right. 
$$
\end{small}
is a smooth $K3$ surface of degree $\deg(K_0) = 14$ and $S'_{0,red}$ is the projection of $K_0$ from the $3$-plane $H_0 = \{z_4 = z_5 = z_6 = z_7 = z_8 = 0\}\subset \mathbb{P}^8$ intersecting $K_0$ in the five points
\stepcounter{thm}
\begin{equation}\label{ptsK3}
[1 : -1 : 0 : 1 : 0 : \dots : 0],\: [0 : a_{\pm} : 1 : 0 : \dots : 0],\: [- 1 + a_{\pm} : a_{\pm} : 0 : 1 : 0 : \dots : 0].
\end{equation}
Note that the scheme
$$
\mathcal{Z}_t = \{u_0^2 - u_0u_1 + u_1^2 = u_0u_2 - u_0u_3 + u_1u_3 = u_1u_2 - u_0u_3 = u_2^2 - u_2u_3 + u_3^2 = u_4 = 0\}\subset\mathbb{P}^4_{k(t)},
$$
defined by the same equations of $\Sing(S'_{0,red})$, is contained in $\mathcal{S}'_t$. Let $\rho_t:\mathbb{P}^4_{k(t)}\dasharrow \mathbb{P}^8_{k(t)}$ be the map induced by the quadrics containing $\mathcal{Z}_t$ and set $\mathcal{K}_t = \overline{\rho_t(\mathcal{S}'_t)}$. Let $\mathcal{H}_t = \{z_4 = z_5 = z_6 = z_7 = z_8 = 0\}\subset \mathbb{P}^8_{k(t)}$ and $\pi_t: \mathcal{K}_t\dasharrow \mathcal{S}'_t$ the projection from $\mathcal{H}_t$. Then $\pi_t$ is the inverse of $\rho_{t|\mathcal{S}'_t}: \mathcal{S}'_t\dasharrow \mathcal{K}_t$. For $t\in k$ general $K_t$ is a smooth surface with arithmetic and geometric genus both equal to one and the following Hodge diamond 
$$
\begin{array}{ccccc}
 &  & 1 &  &  \\ 
 & 0 &  & 0 &  \\ 
1 &  & 22 &  & 1 \\ 
 & 0 &  & 0 &  \\ 
 &  & 1 &  & 
\end{array} 
$$
and $H_t$ intersects $K_t$ in the geometrically reducible curve
$$
C_t = \{z_0^2 + z_0z_3 + z_3^2 = z_1^2 - z_1z_2 + z_2^2 - z_0z_3 = z_4 = \dots = z_8 = 0\}\subset K_t
$$ 
and in the point $p_t = [1 : -1 : 0 : 1 : 0 : \dots : 0]$. The birational map $\pi_t:K_t\dasharrow S'_t$ is not defined in $p_t$ and contracts $C_t$ to the scheme
$$
W_t = \left\lbrace\begin{array}{lll}
u_0^2 + u_0u_3 + u_3^2 = u_0u_1 + u_1u_3 - u_2u_3 + u_3^2 = u_1^2 - u_0u_3 + u_1u_3 - u_2u_3 & = & 0; \\ 
u_0u_2 - u_0u_3 + u_1u_3 = u_1u_2 - u_0u_3 = u_2^2 - u_2u_3 + u_3^2 = u_4 & = & 0;
\end{array}\right.
$$
which is in fact the singular locus of $S'_t$ and geometrically is the union of four distinct points. The inverse map $\rho_{t|S'_t}:S'_t\dasharrow K_t$ contracts the line $\{u_0+u_2 = u_1+u_3 = u_4 = 0\}$ to $p_t$.

Summing-up we constructed a family $\mathcal{S}'_t\subset\mathbb{P}^4_{k(t)}$, whose general member is a surface of degree nine with four singular points, specializing to $S'_0$. However, the family $\mathcal{K}_t$ can not specialize to $K_0$ since for instance $\deg(K_t) = 20$ for $t\in k$ general but $\deg(K_0) = 14$. Indeed the fiber of $\mathcal{K}_t$ over $t = 0$ is the union $K_0\cup E_0$ of two irreducible surfaces where $E_0$ is given by 
$$
\left\lbrace\begin{array}{lll}
z_0^2 + z_0z_3 + z_3^2 = z_4^2 - z_4z_5 + z_5^2 = z_6^2 - z_6z_7 + z_7^2 = z_1^2 - z_1z_2 + z_2^2 - z_0z_3 & = & 0;\\
z_0z_4 + z_1z_6 - z_2z_6 - z_1z_7 = z_1z_4 - z_1z_5 + z_2z_5 - z_3z_7 = z_3z_5 - z_2z_6 - z_1z_7 + z_2z_7 & = & 0;\\
z_2z_4 - z_1z_5 + z_0z_7 = z_3z_4 - z_1z_6 + z_2z_7 = z_0z_5 + z_1z_6 - z_2z_7 = z_0z_5 + z_1z_6 - z_2z_7 & = & 0;\\
z_0z_6 - z_0z_7 - z_3z_7 = z_4z_6 - z_4z_7 + z_5z_7 = z_3z_6 + z_0z_7 = z_5z_6 - z_4z_7 = z_8 & = & 0.
\end{array}\right. 
$$
Note that the projection from $H_0$ contracts $E_0$ to the singular locus of $S'_0$. For a similar construction see \cite[Section 5]{HK07}.

Next, we describe a better behaved family of surfaces coming from a slightly more complicated family of cubic $4$-folds. Let us now consider the family 
$$
X_t^4 = \{G_t = x_0^3 + x_1^3 + x_2^3 + x_3^3 + x_4^3 + x_5^3 + tA + t^2B =  0\}\subset \mathbb{P}^{5}_{k(t)}
$$
where 
$$
A = x_1x_4^2 - x_0x_4x_5 + 3x_2x_4x_5 + x_2x_5^2 - 2x_3x_5^2, \:  B = x_0^2x_5 - x_0x_2x_5 + x_1x_2x_5 + x_2^2x_5 + x_0x_3x_5 - x_2x_3x_5.
$$
Note that $X_t$ contains the del Pezzo surface $\mathcal{S}_t$ in (\ref{dP5t}). As usual we consider the map $\qoppa:X_t^4\dasharrow\mathbb{P}^4_{k(t)}$ given by the quadrics containing $\mathcal{S}_t$ and its inverse $\varphi:\mathbb{P}^4_{k(t)}\dasharrow X_t^4$. As before we will denote by $\mathcal{S}'_t$ the base locus of $\varphi$. 

In order to continue our analysis we will need the following fact.
\begin{Proposition}\label{seg}
Let $S\subset\mathbb{P}^5_k$ be a del Pezzo surface of degree five over a field $k$, $\overline{S} = S\times_{\Spec(k)}\overline{k}\subset \mathbb{P}^5_{\overline{k}}$ its algebraic closure, and $\qoppa:\mathbb{P}^5_{\overline{k}}\dasharrow\mathbb{P}^4_{\overline{k}}$ the map induced by the linear system of quadrics containing $\overline{S}$. Then
\begin{itemize}
\item[(i)] $\overline{S}$ is contained in five Segre $3$-folds $T_i\cong\mathbb{P}^2\times\mathbb{P}^1$;
\item[(ii)] the map $\qoppa$ contracts $T_i$ onto a line $L_i\subset \mathbb{P}^4_{\overline{k}}$.
\end{itemize}
Furthermore, $\qoppa_{|X^4}: X^4\dasharrow \mathbb{P}^4_{\overline{k}}$ is birational. Let $X^4$ be a general cubic $4$-fold containing $\overline{S}$, $\varphi:\mathbb{P}^4_{\overline{k}}\dasharrow X^4$ the inverse of $\qoppa_{|X^4}$, and $S_{\phi}$ the indeterminacy locus of $\phi$. Then $L_i\subset S_{\phi}$.
\end{Proposition}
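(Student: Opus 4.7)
The plan is to combine the classical anticanonical description of a degree-five del Pezzo surface in $\mathbb{P}^5$ with an explicit bidegree computation on each Segre threefold $T_i\cong\mathbb{P}^2\times\mathbb{P}^1$. The key observation is that quadrics through $\overline{S}$ restricted to $T_i$ split off a ruling, which simultaneously collapses $T_i$ to a line under $\qoppa$ and forces the inverse $\varphi$ to be indeterminate along that line.

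First I would identify the five conic fibrations: over $\overline{k}$, $\Pic(\overline{S}) = \mathbb{Z}\langle L, E_1,\ldots,E_4\rangle$ with $-K_{\overline{S}} = 3L-\sum E_i$, and the only effective classes $D$ with $D^2 = 0$ and $D\cdot(-K_{\overline{S}}) = 2$ are $L-E_i$ ($i=1,\ldots,4$) and $2L-\sum E_i$. Each defines a base-point-free pencil $\pi_i:\overline{S}\to\mathbb{P}^1$ whose fibres are conics in the anticanonical embedding, and since distinct fibres are disjoint on $\overline{S}$, the planes they span sweep out a smooth $\mathbb{P}^2$-bundle $T_i\subset\mathbb{P}^5$ of degree $3$. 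It is classical that this is the balanced rational normal scroll, so $T_i\cong\mathbb{P}^2\times\mathbb{P}^1$ Segre-embedded; this proves (i). Writing $\pi_2:T_i\to\mathbb{P}^1$ for the ruling projection, $\overline{S}\subset T_i$ has bidegree $(2,1)$, since its restriction to a ruling is a conic and $\deg\overline{S}=5$.

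For (ii), I would restrict the four-dimensional system $|\mathcal{I}_{\overline{S}}(2)|$ to $T_i$: each $Q\supset\overline{S}$ gives an effective $(2,2)$-divisor containing the fixed $(2,1)$-divisor $\overline{S}$, so $Q|_{T_i} = \overline{S}+F$ with $F$ a ruling of $\pi_2$. If $s\in H^0(T_i,\mathcal{O}(2,1))$ cuts out $\overline{S}$ and $Q_0,\ldots,Q_4$ span $|\mathcal{I}_{\overline{S}}(2)|$, then on $T_i$ one has $Q_j|_{T_i} = s\cdot\pi_2^{\ast}f_j$ for linear forms $f_j$ on $\mathbb{P}^1$, so for $p\in\pi_2^{-1}(t)\setminus\overline{S}$
\[
\qoppa(p) \;=\; \bigl(f_0(t):\cdots:f_4(t)\bigr).
\]
Each ruling collapses to a point and the image $L_i = \qoppa(T_i)$ is the image of a linear map $\mathbb{P}^1\to\mathbb{P}^4$, hence a line.

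Finally, fix $q\in L_i$ general and let $\Pi_q = \pi_2^{-1}(q)\cong\mathbb{P}^2$ be the ruling with $\qoppa(\Pi_q)=q$. Then $\Pi_q\cap X^4$ is a plane cubic containing the conic $C_q = \Pi_q\cap\overline{S}$, so $\Pi_q\cap X^4 = C_q\cup\ell_q$ for a residual line $\ell_q\subset X^4$. For $q$ outside the finitely many points over which $\pi_i$ has reducible fibre, $C_q$ is smooth and contains no line, so $\ell_q\not\subset\overline{S}$; thus $\qoppa|_{X^4}$ is defined on a dense open of $\ell_q$ and contracts $\ell_q$ to $q$, making $\bigl(\qoppa|_{X^4}\bigr)^{-1}(q)$ of positive dimension and forcing $\varphi$ to be indeterminate at $q$. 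Since $S_\varphi$ is closed, $L_i\subset S_\varphi$. The main obstacle is to pin down both that $T_i$ is the balanced scroll and that $L_i$ is exactly a line (rather than a higher-degree rational curve); the bidegree bookkeeping giving the factorisation $Q_j|_{T_i}=s\cdot\pi_2^{\ast}f_j$ is precisely what makes both facts simultaneously transparent.
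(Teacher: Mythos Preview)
Your argument follows essentially the same route as the paper's: both identify the five conic pencils on $\overline{S}$ (you via Picard classes, the paper via the blow-up description), both observe that the planes spanned by fibres sweep out Segre $3$-folds, and both use the residual-line argument $\Pi_q\cap X^4=C_q\cup\ell_q$ to show that $\qoppa|_{X^4}$ contracts a line over each point of $L_i$, forcing $L_i\subset S_\varphi$.

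The one place your treatment genuinely adds something is the justification of (ii). The paper simply notes that a line in a ruling $\mathbb{P}^2$ of $T_i$ meets $\overline{S}$ in two points, so each ruling collapses, and then asserts the image is a $\mathbb{P}^1$; this shows the image is a rational curve but does not by itself pin down the degree. Your bidegree bookkeeping and the factorisation $Q_j|_{T_i}=s\cdot\pi_2^\ast f_j$ with $f_j$ linear on $\mathbb{P}^1$ make it transparent that $L_i$ is indeed a line.

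One small omission: the statement also asserts that $\qoppa|_{X^4}$ is birational, which you use but do not prove. The paper handles this by invoking the one-apparent-double-point property of $\overline{S}$ (Proposition~\ref{oadp}); you should add a sentence to the same effect.
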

\begin{proof}
The surface $\overline{S}$ is $\mathbb{P}^2_{\overline{k}}$ blown-up in four points. On $\overline{S}$ there are five pencils of conics coming from the lines through each one of the four points and the conics through all of them. Each conic in each one of these pencils spans a plane and the union of these planes gives rise to a Segre $3$-fold. So we get five Segre $3$-folds $T_i\cong\mathbb{P}^2\times\mathbb{P}^1\supset\overline{S}$. A line in each of the $\mathbb{P}^2$ in $T_i$ intersects $\overline{S}$ in two points. So $\qoppa$ contracts $T_i\cong\mathbb{P}^2\times\mathbb{P}^1$ onto $L\cong \mathbb{P}^1$.

Now, take a general cubic $4$-fold $X^4\supset\overline{S}$. Since $\overline{S}$ is a surface with one apparent double point by the proof of Proposition \ref{oadp} $\qoppa_{|X^4}: X^4\dasharrow \mathbb{P}^4_{\overline{k}}$ is birational. Then $X^4$ intersects a general plane in $T_i$ in a conic contained in $\overline{S}$ plus a line $R\subset X^4$. The line $R$ is contracted to a point by $\qoppa_{|X^4}$, and hence the inverse of $\qoppa_{|X^4}$ is not defined on $L$.       
\end{proof}

We conclude this section with the proof of Theorem \ref{th_spec}.

\begin{proof}[Proof of Theorem \ref{th_spec}]
Proposition \ref{seg} and its proof suggest a way to construct a scheme $\mathcal{Z}_t\subset\mathcal{S}_t$ such that $\overline{\mathcal{Z}}_t\subset\overline{\mathcal{S}}_t$ is the union of two skew lines. The linear system $\mathcal{L}_{\overrightarrow{2p}_{1,\pm}^{L_{1,\pm}},p_3}^{4}$ has base points of multiplicity two at $[0:\xi:1],[0:-\xi:1]$, and hence the lines through these two points are mapped to conics in $\overline{\mathcal{S}}_t$. Take two planes spanned by two general conics in one of these pencils. They are mapped to two points by $\qoppa$ and we denote by $L_{+}$ the line through the these two points. Similarly, considering the other pencil of conics, we construct another line $L_{-}$. By construction these two lines are conjugate and hence $\mathcal{Z}_t = L_{+}\cup L_{-}$ is defined over $k(t)$. Indeed, $\mathcal{Z}_t$ turns out to be defined by the following equations:
$$
\left\lbrace\begin{array}{lll}
t^2u_1u_2 + t(2u_1u_3 - 2u_1u_4 - u_2u_4) - 3u_3u_4 + 3u_4^2 & = & 0;\\
t^2u_2^2 - tu_2u_4 - 4u_3^2 - 4tu_1u_4 + 10u_3u_4 - u_4^2 & = & 0;\\
t(u_2u_3 + u_1u_4 - u_2u_4) + 2u_3^2 - 5u_3u_4 + 2u_4^2 & = & 0;\\
t^2u_1^2 - t(u_1u_3 + u_1u_4) + u_3^2 - u_3u_4 + u_4^2 & = & 0;\\
t(u_0 - u_1 + u_2) + 3u_3 - 3u_4 & = & 0.
\end{array}\right.
$$    
By the proof of Proposition \ref{seg} the fiber of $\qoppa$ over $\mathcal{Z}_t$ is the union of two conjugate Segre $3$-folds containing $\mathcal{S}'_t$. In this case $S'_t$ is a smooth surface of degree nine for $t \in k$ general and $S'_0$ is the base locus of the birational parametrization of the Fermat cubic $4$-fold as in the first part of this section. 

The linear system of quadrics containing $\mathcal{Z}_t$ maps $\mathcal{S}'_t$ to a smooth $K3$ surface $\mathcal{K}_t\subset\mathbb{P}^8$ of degree $\deg(\mathcal{K}_t) = 14$ and $\mathcal{S}'_t$ is the projection of $\mathcal{K}_t$ from a $5$-secant $3$-plane. In this case $K_0$ is exactly the $K3$ surface mapping onto $S'_0$ in the previous construction. 
\end{proof}

\section{Rational points over finite and number fields}\label{ratP}

As an application of the parametrizations in Sections \ref{S4F} and \ref{res_scal} we derive results on the rational points of $X^{2n}$ and of the complete intersection $Y^{2n-2}$ in Lemma \ref{Irr}.

\begin{say}\label{du_map}
In order to do this we need a deeper understanding of the geometry of the birational correspondence 
$$
\begin{tikzcd}
\mathbb{P}^{2n} \arrow[rr, "\overline{\varphi}_{2n}", dashed, bend left] &  & X^{2n} \arrow[ll, "\qoppa_{2n}", dashed, bend left]
\end{tikzcd}
$$
in Sections \ref{res_scal} and \ref{ch2} represented in the following picture 

\tikzset{every picture/.style={line width=0.75pt}} 
\begin{center}
\begin{tikzpicture}[x=0.75pt,y=0.75pt,yscale=-0.65,xscale=0.65]

\draw  [color={rgb, 255:red, 74; green, 74; blue, 74 }  ,draw opacity=1 ][fill={rgb, 255:red, 155; green, 155; blue, 155 }  ,fill opacity=0.33 ] (399.3,31) -- (505,31) -- (459.7,59) -- (354,59) -- cycle ;
\draw  [color={rgb, 255:red, 74; green, 74; blue, 74 }  ,draw opacity=1 ][fill={rgb, 255:red, 155; green, 155; blue, 155 }  ,fill opacity=0.28 ] (392.3,117) -- (498,117) -- (452.7,145) -- (347,145) -- cycle ;
\draw  [color={rgb, 255:red, 74; green, 74; blue, 74 }  ,draw opacity=1 ][fill={rgb, 255:red, 155; green, 155; blue, 155 }  ,fill opacity=0.77 ] (462.82,46.58) .. controls (457.51,46.65) and (453.35,56.11) .. (453.51,67.7) .. controls (453.68,79.29) and (458.11,88.63) .. (463.42,88.55) .. controls (468.72,88.48) and (473.15,97.81) .. (473.32,109.4) .. controls (473.48,120.99) and (469.32,130.45) .. (464.02,130.52) -- (387.2,131.62) .. controls (392.5,131.55) and (396.67,122.09) .. (396.5,110.5) .. controls (396.33,98.91) and (391.9,89.57) .. (386.6,89.65) .. controls (381.29,89.73) and (376.86,80.39) .. (376.69,68.8) .. controls (376.53,57.21) and (380.69,47.75) .. (386,47.68) -- cycle ;
\draw [color={rgb, 255:red, 74; green, 74; blue, 74 }  ,draw opacity=1 ]   (476,17.5) -- (456,162.5) ;
\draw [color={rgb, 255:red, 74; green, 74; blue, 74 }  ,draw opacity=1 ]   (463,17) -- (443,162.5) ;
\draw [color={rgb, 255:red, 74; green, 74; blue, 74 }  ,draw opacity=1 ]   (489,17) -- (471.07,162.5) ;
\draw [color={rgb, 255:red, 74; green, 74; blue, 74 }  ,draw opacity=1 ]   (405,17.5) -- (385,162.5) ;
\draw [color={rgb, 255:red, 74; green, 74; blue, 74 }  ,draw opacity=1 ]   (419,17.5) -- (400,162.5) ;
\draw [color={rgb, 255:red, 74; green, 74; blue, 74 }  ,draw opacity=1 ]   (435,17) -- (414,163.5) ;
\draw [color={rgb, 255:red, 74; green, 74; blue, 74 }  ,draw opacity=1 ]   (449,17) -- (429,162.5) ;
\draw  [color={rgb, 255:red, 74; green, 74; blue, 74 }  ,draw opacity=1 ] (381.41,17.5) -- (494.4,17.5) .. controls (509.08,17.5) and (515.76,49.96) .. (509.32,90) .. controls (502.88,130.04) and (485.75,162.5) .. (471.07,162.5) -- (358.08,162.5) .. controls (343.4,162.5) and (336.72,130.04) .. (343.16,90) .. controls (349.6,49.96) and (366.73,17.5) .. (381.41,17.5) -- cycle ;
\draw [color={rgb, 255:red, 74; green, 74; blue, 74 }  ,draw opacity=1 ]   (501,20.5) -- (485,153.5) ;
\draw [color={rgb, 255:red, 74; green, 74; blue, 74 }  ,draw opacity=1 ]   (393,17.5) -- (371,162.5) ;
\draw [color={rgb, 255:red, 74; green, 74; blue, 74 }  ,draw opacity=1 ]   (381.41,17.5) -- (358.08,162.5) ;
\draw [color={rgb, 255:red, 74; green, 74; blue, 74 }  ,draw opacity=1 ]   (366,28.5) -- (348,154.5) ;
\draw  [color={rgb, 255:red, 0; green, 0; blue, 0 }  ,draw opacity=1 ] (66,94.5) -- (100,94.5) .. controls (104.42,94.5) and (108,98.08) .. (108,102.5) .. controls (108,106.92) and (104.42,110.5) .. (100,110.5) -- (66,110.5) .. controls (61.58,110.5) and (58,106.92) .. (58,102.5) .. controls (58,98.08) and (61.58,94.5) .. (66,94.5) -- cycle ;
\draw  [color={rgb, 255:red, 0; green, 0; blue, 0 }  ,draw opacity=1 ][fill={rgb, 255:red, 128; green, 128; blue, 128 }  ,fill opacity=0.77 ] (78.8,64.85) -- (191.5,64.85) -- (143.2,104.35) -- (30.5,104.35) -- cycle ;
\draw  [color={rgb, 255:red, 74; green, 74; blue, 74 }  ,draw opacity=1 ][fill={rgb, 255:red, 155; green, 155; blue, 155 }  ,fill opacity=0.33 ] (44.7,35.65) .. controls (44.7,40.68) and (62.38,44.75) .. (84.2,44.75) .. controls (106.02,44.75) and (123.7,40.68) .. (123.7,35.65) .. controls (123.7,30.62) and (141.38,26.55) .. (163.2,26.55) .. controls (185.02,26.55) and (202.7,30.62) .. (202.7,35.65) -- (202.7,108.45) .. controls (202.7,103.42) and (185.02,99.35) .. (163.2,99.35) .. controls (141.38,99.35) and (123.7,103.42) .. (123.7,108.45) .. controls (123.7,113.48) and (106.02,117.55) .. (84.2,117.55) .. controls (62.38,117.55) and (44.7,113.48) .. (44.7,108.45) -- cycle ;
\draw  [color={rgb, 255:red, 74; green, 74; blue, 74 }  ,draw opacity=1 ][fill={rgb, 255:red, 155; green, 155; blue, 155 }  ,fill opacity=0.33 ] (16,78.47) .. controls (16,83.98) and (33.68,88.45) .. (55.5,88.45) .. controls (77.32,88.45) and (95,83.98) .. (95,78.47) .. controls (95,72.97) and (112.68,68.5) .. (134.5,68.5) .. controls (156.32,68.5) and (174,72.97) .. (174,78.47) -- (174,158.28) .. controls (174,152.77) and (156.32,148.3) .. (134.5,148.3) .. controls (112.68,148.3) and (95,152.77) .. (95,158.28) .. controls (95,163.78) and (77.32,168.25) .. (55.5,168.25) .. controls (33.68,168.25) and (16,163.78) .. (16,158.28) -- cycle ;
\draw  [dash pattern={on 4.5pt off 4.5pt}]  (218,60.5) -- (332,60.5) ;
\draw [shift={(334,60.5)}, rotate = 180] [color={rgb, 255:red, 0; green, 0; blue, 0 }  ][line width=0.75]    (10.93,-3.29) .. controls (6.95,-1.4) and (3.31,-0.3) .. (0,0) .. controls (3.31,0.3) and (6.95,1.4) .. (10.93,3.29)   ;
\draw  [dash pattern={on 4.5pt off 4.5pt}]  (331,131.5) -- (215,131.5) ;
\draw [shift={(213,131.5)}, rotate = 360] [color={rgb, 255:red, 0; green, 0; blue, 0 }  ][line width=0.75]    (10.93,-3.29) .. controls (6.95,-1.4) and (3.31,-0.3) .. (0,0) .. controls (3.31,0.3) and (6.95,1.4) .. (10.93,3.29)   ;
\draw [color={rgb, 255:red, 0; green, 0; blue, 0 }  ,draw opacity=1 ]   (82,94) -- (75,110) ;
\draw [color={rgb, 255:red, 0; green, 0; blue, 0 }  ,draw opacity=1 ]   (100,94.5) -- (93,110.5) ;
\draw [color={rgb, 255:red, 0; green, 0; blue, 0 }  ,draw opacity=1 ]   (91,94) -- (84,110) ;
\draw [color={rgb, 255:red, 0; green, 0; blue, 0 }  ,draw opacity=1 ]   (73,94.5) -- (66,110.5) ;

\draw (5,23.4) node [anchor=north west][inner sep=0.75pt]  [font=\small]  {$\mathbb{P}^{2n}$};
\draw (514,18.4) node [anchor=north west][inner sep=0.75pt]  [font=\small]  {$X^{2n}$};
\draw (258,36.4) node [anchor=north west][inner sep=0.75pt]  [font=\small]  {$\overline{\varphi }_{2n}$};
\draw (264,107.4) node [anchor=north west][inner sep=0.75pt]  [font=\small]  {$\qoppa _{2n}$};
\end{tikzpicture}
\end{center}
which we now discuss. The small ruled area on the left represents the base locus $Y^{2n-2}$ of $\overline{\varphi}_{2n}$ in Lemma \ref{Irr}. The big ruled area on the right, which we will denote by $E^{2n-1}$, is a divisor in $X^{2n}$ which is contract to $Y^{2n-2}$ by $\qoppa_{2n}$. The fibers of $\qoppa_{2n|E^{2n-1}}:E^{2n-1}\dasharrow Y^{2n-2}$ are lines, intersecting each one of the two light grey shaded $n$-planes $H^n_{+},H^n_{-}$ in Proposition \ref{inv1}, and contained in $X^{2n}$.

The light grey shaded areas on the left are two divisors, that we will denote by $D_{+}^{2n-1},D_{-}^{2n-1}$ and which are conjugate over the base field, and get contracted to $H^n_{+},H^n_{-}$ by $\overline{\varphi}_{2n}$. Finally, the dark grey shaded area on the left is the hyperplane $H^{2n-1} = \{u_{2n}=0\}$ which gets contracted to the Fermat cubic $X^{2n-2} = X^{2n}\cap\{x_{2n} = x_{2n+1} = 0\}$ represented by the dark grey shaded area on the right. More specifically, $D^{2n-1} = D_{+}^{2n-1}\cup D_{-}^{2n-1}$ is given by 
$$
D^{2n-1} = \left\lbrace \overline{A}P+\overline{B}Q = 0\right\rbrace
$$
where 
$$
\begin{array}{lll}
P & = & u_0^3 +\sum_{i=1}^n(u_{2i-1}^3+3u_{2i-1}^2u_{2i}+3u_{2i-1}u_{2i}^2+9u_{2i}^3);\\
Q & = & 3u_0^3 +\sum_{i=1}^n(3u_{2i-1}^3-3u_{2i-1}^2u_{2i}+3u_{2i-1}u_{2i}^2-9u_{2i}^3);
\end{array}
$$    
and $\overline{A},\overline{B}$ are the polynomials in Lemma \ref{Irr}.
\end{say}

\begin{proof}[Proof of Theorem \ref{ratpQ}]
The birational parametrization $\overline{\varphi}_{2n}:\mathbb{P}^{2n}\dasharrow X^{2n}\subset\mathbb{P}^{2n+1}$ in Section \ref{res_scal} is given by polynomials of degree four. Hence, $\overline{\varphi}_{2n}$ maps points of $\mathbb{P}^{2n}$ of height at most $B^{\frac{1}{4}}$ to points of $X^{2n}$ of height at most $B$. Let $V\subset X^{2n}$ be the open subset over which $\overline{\varphi}_{2n}$ is finite. The number of points of height at most $B$ of $V$ grows at least as the number of points of height at most $B^{\frac{1}{4}}$ of $V$ which in turn grows as the number of points of height at most $B^{\frac{1}{4}}$ of $\mathbb{P}^{2n}$ minus the number of points of height at most $B^{\frac{1}{4}}$ of a closed subset $Z\subset\mathbb{P}^{2n}$.

Denote by $Z_i$ the irreducible components of $Z$ and let $a_i = \deg(Z_i)$. By \cite[Theorem B]{Pi95} the number of points of height at most $B^{\frac{1}{4}}$ of $Z_i$ grows as $B^{\frac{1}{4}(\dim(Z_i)+\frac{1}{a_i}+\epsilon)}$ with $0< \epsilon\ll 1$. Now, to get the lower bound it is enough to note that by \cite[Theorem 2.1]{Pe02} the number of points of height at most $B^{\frac{1}{4}}$ grows as $B^{\frac{2n+1}{4}}$. When $n = 1$ this lower bound can be improved by using the parametrization in Proposition \ref{BiRP2bis}, which is given by polynomials of degree three, instead of $\overline{\varphi}_{2}$. Moreover, to get the upper bound we can argue in the same way on the map $\qoppa_{2n}:X^{2n}\dasharrow\mathbb{P}^{2n}$ in Proposition \ref{inv1} which is given by polynomials of degree two. Finally, in dimension four it is enough to note that, thanks to the construction in Section \ref{sec_deg}, our argument for the Fermat $4$-fold applies to the general cubic $4$-fold of the family $X_t$ in Theorem \ref{th_spec}.
\end{proof}

Next, we investigate the number of rational points of $X^{2n}$ over finite fields. We begin with the Fermat cubic surface $X^2\subset\mathbb{P}^3$.

\begin{Lemma}\label{modeq}
The polynomial $x^2+3 \in \mathbb{F}_{p^m}[x]$ has a root in $\mathbb{F}_{p^m}$ if and only if either $m$ is even or $p\equiv 1\mod6$ or $p\in\{2,3\}$.
\end{Lemma}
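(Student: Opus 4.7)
The plan is to split into cases according to the characteristic and reduce the general case to a statement about $\mathbb{F}_p$ alone.

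First I would dispatch the small primes directly. In characteristic two, $x^2+3 = x^2+1 = (x+1)^2$, so $x=1$ is always a root in $\mathbb{F}_{2^m}$. In characteristic three, $x^2+3 = x^2$, so $x=0$ is always a root in $\mathbb{F}_{3^m}$. This explains the exceptional primes $p \in \{2,3\}$ appearing in the statement.

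Next, assume $p \geq 5$. The goal is to determine when $-3$ is a square in $\mathbb{F}_{p^m}$. I would first reduce to $m=1$ by exploiting the inclusion $\mathbb{F}_p^* \subset \mathbb{F}_{p^m}^*$ of cyclic groups. Using $(p^m-1)/2 = \tfrac{p-1}{2}(1+p+\dots+p^{m-1})$ and the Euler criterion, for any $a \in \mathbb{F}_p^*$ one computes
\[
a^{(p^m-1)/2} = \bigl(a^{(p-1)/2}\bigr)^{1+p+\dots+p^{m-1}}.
\]
Since $p$ is odd, the exponent $1+p+\dots+p^{m-1}$ has the same parity as $m$, so $a$ is a square in $\mathbb{F}_{p^m}$ if and only if either $a$ is already a square in $\mathbb{F}_p$ or $m$ is even. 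Applying this to $a=-3$, it remains to determine exactly when $-3$ is a square in $\mathbb{F}_p$.

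Finally I would compute the Legendre symbol $\left(\tfrac{-3}{p}\right)$. Using multiplicativity and quadratic reciprocity,
\[
\left(\tfrac{-3}{p}\right) = \left(\tfrac{-1}{p}\right)\left(\tfrac{3}{p}\right) = \left(\tfrac{-1}{p}\right)\cdot (-1)^{(p-1)/2}\left(\tfrac{p}{3}\right) = \left(\tfrac{p}{3}\right),
\]
so $-3$ is a square in $\mathbb{F}_p$ if and only if $p$ is a square mod $3$, i.e.\ $p \equiv 1 \pmod 3$. Since $p$ is odd, this is equivalent to $p \equiv 1 \pmod 6$. Putting everything together yields the three disjoint sufficient conditions in the statement, and the reduction in the previous paragraph shows that these conditions are also necessary: when $p \equiv 5 \pmod 6$ and $m$ is odd, $-3$ is a non-square in $\mathbb{F}_p$ and remains so in $\mathbb{F}_{p^m}$. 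There is no serious obstacle here; the only care needed is the bookkeeping of parities and the correct application of quadratic reciprocity.
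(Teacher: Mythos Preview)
Your proof is correct and follows essentially the same approach as the paper: both use the factorization $(p^m-1)/2 = \tfrac{p-1}{2}(1+p+\dots+p^{m-1})$ together with the Euler criterion and the parity of the geometric sum to reduce to the question of whether $-3$ is a square in $\mathbb{F}_p$. Your version is slightly more complete in that you justify this last point via quadratic reciprocity, whereas the paper simply asserts that $(-3)^{(p-1)/2}=-1$ when $p\equiv 5\pmod 6$.
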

\begin{proof}
Assume that $x^2+3$ has a root in $\mathbb{F}_{p}$. Since $\mathbb{F}_p$ can be embedded in $\mathbb{F}_{p^m}$ as its prime subfield $x^2+3$ has a root in $\mathbb{F}_{p^m}$ as well. Moreover, note that $x^2+3$ has a root in $\mathbb{F}_{p}$ if and only if either $p\equiv 1\mod6$ or $p\in\{2,3\}$.

Now, assume $m$ to be even. If $x^2+3$ does not have a root in $\mathbb{F}_{p}$ then it is irreducible in $\mathbb{F}_{p}[x]$ and $\mathbb{F}_{p^2} \cong \frac{\mathbb{F}_{p}[x]}{(x^2+3)}$. So $x^2+3$ has a root in $\mathbb{F}_{p^2}$. Since $m$ is even $\mathbb{F}_{p^2}\subset \mathbb{F}_{p^m}$ and hence $x^2+3$ has a root in $\mathbb{F}_{p^m}$.

So far we proved that if either $m$ is even or $p\equiv 1\mod6$ or $p\in\{2,3\}$ then $x^2+3$ has a root in $\mathbb{F}_{p^m}$. Hence, to conclude we are left with the case in which $m$ is odd and $p\equiv 5\mod6$. Let $a\in\mathbb{F}_{p^m}$ be a root of $x^2+3$. Then
$$
a^{p^m-1} = (-3)^{\frac{p^m-1}{2}} = \left((-3)^{\frac{p-1}{2}}\right)^{\sum_{i=0}^{m-1}p^i}.
$$
On the other hand $a^{p^m-1} = 1$. Since $p\equiv 5\mod6$ we have that $(-3)^{\frac{p-1}{2}} = -1$, and since $m$ is odd $\sum_{i=0}^{m-1}p^i$ is also odd. Hence $a^{p^m-1} = (-1)^{\sum_{i=0}^{m-1}p^i} = -1$, a contradiction. 
\end{proof} 

\begin{Corollary}\label{ptsff}
Let $X^2\subset\mathbb{P}^3$ be the Fermat cubic surfaces over a finite field $k = \mathbb{F}_{p^m}$. Then the number of points of $X^2$ is given by 
$$
|X^2(\mathbb{F}_q)| = 
\left\lbrace
\begin{array}{ll}
q^2+7q+1 & \text{if } m \text{ is even or }  p\equiv 1\mod6;\\ 
q^2+q+1 & \text{otherwise}.
\end{array}\right.
$$
\end{Corollary}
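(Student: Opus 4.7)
The plan is to identify $X^2$ with the blow-up of $\mathbb{P}^2$ at six points, and then to count rational points through the standard blow-up formula. Geometrically, the Fermat cubic surface is a smooth del Pezzo surface of degree three, and the rationality constructions of Section~\ref{S4F} exhibit it explicitly as such a blow-up at an explicit zero-dimensional subscheme $Z \subset \mathbb{P}^2$ of length six, defined over $\mathbb{F}_q$. The blow-up formula then reads
$$
|X^2(\mathbb{F}_q)| \;=\; |\mathbb{P}^2(\mathbb{F}_q)| - r + r \cdot |\mathbb{P}^1(\mathbb{F}_q)| \;=\; q^2 + q + 1 + r q,
$$
where $r := |Z(\mathbb{F}_q)|$ counts the $\mathbb{F}_q$-rational geometric components of $Z$; a Galois-conjugate pair of centres contributes nothing to $|X^2(\mathbb{F}_q)|$, since its exceptional divisor is a pair of $\mathbb{P}^1$'s interchanged by Frobenius and therefore has no $\mathbb{F}_q$-point.

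For characteristic different from two, the Remark following Proposition~\ref{BiRP2bis} identifies $X^2$ with the blow-up of $\mathbb{P}^2$ at the scheme $Z = \{q_{1,\pm}, q_{2,\pm}, q_{3,\pm}\}$, whose six geometric points all have coordinates involving $\xi$ and split into three Galois-conjugate pairs over $\mathbb{F}_q(\xi)/\mathbb{F}_q$. Thus $r = 6$ precisely when $\xi \in \mathbb{F}_q$, and $r = 0$ otherwise. For characteristic two, I would use the alternative parametrization $\beta$ of Proposition~\ref{Fer_Char2}: a direct computation of its base locus reveals six points conjugate in pairs over the quadratic extension $\mathbb{F}_q(\omega)/\mathbb{F}_q$, where $\omega$ denotes a primitive cube root of unity.

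The corollary's case distinction now matches the two outcomes $r \in \{0, 6\}$. For $p \neq 2, 3$, Lemma~\ref{modeq} shows the hypothesis ``$m$ even or $p \equiv 1 \pmod 6$'' is equivalent to $\xi \in \mathbb{F}_q$, hence to $r = 6$ and to $|X^2(\mathbb{F}_q)| = q^2 + 7q + 1$; in the complementary case $\xi \notin \mathbb{F}_q$ the three conjugate pairs give $r = 0$ and $|X^2(\mathbb{F}_q)| = q^2 + q + 1$. For $p = 2$ the condition degenerates to ``$m$ even'', which is equivalent to $3 \mid q - 1$ and hence to $\omega \in \mathbb{F}_q$: the same dichotomy in $r$ and the same two counts follow.

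The hardest part is the characteristic-two case, which cannot be read off from Proposition~\ref{BiRP2bis} (since there $\xi^2 = -3 = 1$ forces $\xi = 1$ and the six centres of Proposition~\ref{BiRP2bis} collide). One must instead work with Proposition~\ref{Fer_Char2} and verify that the base locus of $\beta$ consists of six geometrically distinct points splitting over $\mathbb{F}_q(\omega)/\mathbb{F}_q$ rather than $\mathbb{F}_q(\xi)/\mathbb{F}_q$. This quadratic extension shift is precisely what aligns the point count with the corollary's condition in characteristic two.
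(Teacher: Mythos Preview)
Your approach is essentially the same as the paper's: both use the blow-up description coming from Proposition~\ref{BiRP2bis} (for $p\neq 2$) and from Proposition~\ref{Fer_Char2} (for $p=2$), combined with Lemma~\ref{modeq} to decide whether the six centres are $\mathbb{F}_q$-rational. Your packaging via the formula $|X^2(\mathbb{F}_q)|=q^2+q+1+rq$ with $r\in\{0,6\}$ is exactly the paper's count, just written more uniformly; and your observation that in characteristic two the relevant quadratic extension is $\mathbb{F}_q(\omega)$ (equivalently, the condition $3\mid q-1$, i.e.\ $m$ even) is correct and makes explicit what the paper states without justification.

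One omission: you do not treat $p=3$. The paper handles it separately by observing that $X^2$ is then a triple plane with reduced subscheme $\{x_0+x_1+x_2+x_3=0\}$, so $|X^2(\mathbb{F}_q)|=q^2+q+1$. Your sentence ``For characteristic different from two, the Remark following Proposition~\ref{BiRP2bis} identifies $X^2$ with the blow-up\ldots'' is too broad, since in characteristic three $X^2$ is not smooth and is certainly not a six-point blow-up of $\mathbb{P}^2$; you later restrict to $p\neq 2,3$ when invoking Lemma~\ref{modeq}, but the $p=3$ case is left unaddressed.
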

\begin{proof}
We will take advantage of the parametrization in Proposition \ref{BiRP2bis} which yields an isomorphism between the blow-up $\widetilde{\mathbb{P}}^2_{(v_0,v_1,v_2)}$ of $\mathbb{P}^2_{(v_0,v_1,v_2)}$ at the three pairs of conjugate points $q_{1,\pm},q_{2,\pm},q_{3,\pm}$ and $X^2$. Denote by $E_{i,\pm}$ the exceptional divisor over $q_{i,\pm}$ for $i = 1,2,3$. Recall that $k(\xi)$ is a quadratic extension of $k$ with $\xi^2 = -3$.

First assume that $p\notin\{2,3\}$. If $m$ is even or $p\equiv 1\mod6$ then Lemma \ref{modeq} yields that $-3$ is a square in $\mathbb{F}_{p^m}$. So $\xi \in \mathbb{F}_{p^m}$ and the blown-up points $q_{i,\pm}$ are defined over the base field. Hence, the six exceptional divisors $E_{i,\pm}\cong \mathbb{P}^1$ are defined over the base field as well. Therefore
$$
|X^2(\mathbb{F}_q)|  = (|\mathbb{P}^2(\mathbb{F}_q)|-6) + 6|\mathbb{P}^1(\mathbb{F}_q)| = (q^2+q+1-6)+6(q+1) = q^2+7q+1.
$$
If $m$ is odd and $p\not\equiv 1\mod6$ then by Lemma \ref{modeq} we have that $-3$ is not a square in $\mathbb{F}_{p^m}$ so that the blown-up points are not defined over $\mathbb{F}_{p^m}$. In this case the birational parametrization $\chi:\mathbb{P}^2\dasharrow X^2$ in Proposition \ref{BiRP2bis} yields a bijection between $X^2(\mathbb{F}_q)$ and $\mathbb{P}^2(\mathbb{F}_q)$ so that 
$$
|X^2(\mathbb{F}_q)| = |\mathbb{P}^2(\mathbb{F}_q)| = q^2+q+1.
$$
If $p = 3$ then the reduced subscheme of $X^2$ is the plane $\{x_0+x_1+x_2+x_3 = 0\}$ and hence 
$$|X(\mathbb{F}_q)| = |\mathbb{P}^2(\mathbb{F}_q)| = q^2+q+1 = 3^{2m}+3^m+1.$$
Finally, consider the case $p = 2$. If $m$ is odd then the birational parametrization $\beta$ in Proposition \ref{Fer_Char2} is not defined in three pairs of conjugate points and hence $|X^2(\mathbb{F}_{q})| = q^2+q+1$. If $m$ is even then the base scheme of $\beta$ consists of six points defined over the base field and hence $|X^2(\mathbb{F}_{q})| = q^2+7q+1$.
\end{proof}

\begin{Remark}\label{AO}
If $q\equiv 2\mod3$ and $X^N\subset\mathbb{P}^{N+1}$ is a cubic hypersurface of the form
$$
X^N = \{f(x_0,\dots,x_N)+x_{N+1}^3 = 0\}\subset\mathbb{P}^{N+1}
$$
the projection $X^N\rightarrow\mathbb{P}^N_{(x_0,\dots,x_N)}$ yields a bijection between $X^N(\mathbb{F}_q)$ and $\mathbb{P}^N(\mathbb{F}_q)$ \cite[Observation 1.7.2]{Ked12}, \cite[Remark 4.10]{DLR17}. So $\sharp X^N(\mathbb{F}_q) = \frac{q^{N+1}-1}{q-1}$. In particular, for the Fermat cubic $X^{N}\subset\mathbb{P}^{N+1}$ we have that $\sharp X^{2n}(\mathbb{F}_q) = \frac{q^{2n+1}-1}{q-1}$.

By the work of A. Weil when $q\equiv 1\mod3$ the number of points of the Fermat cubic hypersurface $X^{N}\subset\mathbb{P}^{N+1}$ can be computed as the sums of $\frac{q^{N+1}-1}{q-1}$ and an additional term involving the Jacobi sum of certain non trivial characters of $\mathbb{F}_q$ \cite{Wei49}.

Finally, when $q\equiv 0\mod3$ then $x_0^3+\dots + x_N^3 = (x_0+\dots +x_N)^3$ and hence $\sharp X^N(\mathbb{F}_q) = \frac{q^{N+1}-1}{q-1}$.
\end{Remark}

\begin{Proposition}\label{FF_Y}
Let $k = \mathbb{F}_q$. If $q\equiv 2\mod3$ and $n\geq 2$ then the number of points of the complete intersection $Y^{2n-2}\subset\mathbb{P}^{2n}$ is given by 
$$
\sharp Y^{2n-2}(k) = \frac{q^{2n-1}-1}{q-1}
$$
that is $\sharp Y^{2n-2}(k) = \sharp \mathbb{P}^{2n-2}(k)$. 
\end{Proposition}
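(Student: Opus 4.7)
The plan is to exploit the fact that $q\equiv 2\bmod 3$ means $\gcd(3,q-1)=1$, so cubing is a bijection on $\mathbb{F}_q$ (as recalled in Remark \ref{AO}). Observe that the two defining equations from (\ref{ABu0}) both have the shape $u_0^3+(\text{cubic in } u_1,\dots,u_{2n})$. Solving $\overline{A}=0$ for $u_0$ produces a unique cube root, and likewise for $\overline{B}=0$; the two solutions coincide if and only if the two polynomials in $u_1,\dots,u_{2n}$ coincide, which after cancelling the common terms reduces to
\[
\widetilde{B}(u_1,\dots,u_{2n}):=\sum_{i=0}^{n-1}u_{2i+2}\bigl(u_{2i+1}^{2}+3u_{2i+2}^{2}\bigr)=0.
\]
Hence I would reduce the problem to counting
\[
|Y^{2n-2}(\mathbb{F}_q)|=\frac{1}{q-1}\,\bigl|\{(u_1,\dots,u_{2n})\in\mathbb{F}_q^{2n}\setminus\{0\}:\widetilde{B}=0\}\bigr|,
\]
after checking that the unique $u_0$ one recovers is zero exactly when $(u_1,\dots,u_{2n})=0$.

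The heart of the proof, and what I expect to be the only nontrivial step, is an equidistribution lemma for the cubic map $f(a,b)=b(a^{2}+3b^{2})\colon\mathbb{F}_q^{2}\to\mathbb{F}_q$: I claim $|f^{-1}(c)|=q$ for every $c\in\mathbb{F}_q$. Since we may assume $\mathrm{char}(k)\neq 2$ (the complete intersection $Y^{2n-2}$ is the one from Lemma \ref{Irr}) and since $q\equiv 2\bmod 3$, Lemma \ref{modeq} tells us $-3$ is a non-square, so $Q(a,b)=a^{2}+3b^{2}$ is anisotropic and vanishes only at the origin. This immediately gives $|f^{-1}(0)|=q$ (realised by $b=0$). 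For $c\neq 0$, using bijectivity of cubing I can pick $\lambda\in\mathbb{F}_q^{\ast}$ with $\lambda^{3}=c$ and perform the substitution $b=\lambda b'$, $a=\lambda a'$; a direct computation shows $f(a,b)=c\cdot f(a',b')$ with an invertible rescaling, so $|f^{-1}(c)|=|f^{-1}(1)|$ is independent of $c$. Combined with $\sum_{c}|f^{-1}(c)|=q^{2}$, this forces $|f^{-1}(c)|=q$ for every $c$.

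With the equidistribution in hand, the rest is bookkeeping. Write $\widetilde{B}=\sum_{i=0}^{n-1}f(u_{2i+1},u_{2i+2})$, split the $2n$-tuple into its $n$ disjoint pairs, and note that the number of $(u_{2i+1},u_{2i+2})$ achieving a prescribed value $c_i\in\mathbb{F}_q$ is exactly $q$. Therefore
\[
\#\{(u_1,\dots,u_{2n})\in\mathbb{F}_q^{2n}:\widetilde{B}=0\}=\sum_{\substack{(c_0,\dots,c_{n-1})\\ c_0+\cdots+c_{n-1}=0}}q^{n}=q^{n-1}\cdot q^{n}=q^{2n-1}.
\]
Subtracting the origin (which satisfies $\widetilde{B}=0$ trivially) and dividing by $q-1$ yields
\[
|Y^{2n-2}(\mathbb{F}_q)|=\frac{q^{2n-1}-1}{q-1}=|\mathbb{P}^{2n-2}(\mathbb{F}_q)|,
\]
as claimed. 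The only real obstacle is the uniform bound $|f^{-1}(c)|=q$; everything else is a direct consequence of the fact that cubing is bijective on $\mathbb{F}_q$ under our arithmetic hypothesis.
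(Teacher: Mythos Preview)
Your proof is correct and takes a genuinely different route from the paper's.

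The paper argues geometrically, via the birational correspondence $\overline{\varphi}_{2n}:\mathbb{P}^{2n}\dasharrow X^{2n}$ and its inverse $\qoppa_{2n}$ described in \ref{du_map}. It first counts points on the hyperplane section $W^{2n-3}=Y^{2n-2}\cap H^{2n-1}$ by a projection argument and Remark \ref{AO}, then balances point counts on the two sides of the correspondence (using that $\sharp X^{2n}(k)=\sharp\mathbb{P}^{2n}(k)$ and $\sharp X^{2n-2}(k)=\sharp\mathbb{P}^{2n-2}(k)$, that $D_{\pm}^{2n-1}$ contribute only points of $Y^{2n-2}$, that $H_{\pm}^{n}$ and $Z^{n-1}$ have no $k$-points, and that $E^{2n-1}$ is a $\mathbb{P}^1$-bundle over $Y^{2n-2}\setminus Z^{n-1}$) to solve a linear equation for $\sharp Y^{2n-2}(k)$.

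Your argument bypasses the birational geometry entirely. Exploiting that both $\overline{A}$ and $\overline{B}$ have the shape $u_0^3+(\text{rest})$ and that cubing is a bijection on $\mathbb{F}_q$, you eliminate $u_0$ and reduce the affine count to $\{\widetilde{B}=0\}\subset\mathbb{F}_q^{2n}$. The equidistribution lemma for $f(a,b)=b(a^2+3b^2)$ is clean: anisotropy of $a^2+3b^2$ (via Lemma \ref{modeq}) gives $|f^{-1}(0)|=q$, homogeneity of degree three plus bijectivity of cubing gives $|f^{-1}(c)|$ independent of $c\neq 0$, and the total forces all fibres to have size $q$. The splitting $\widetilde{B}=\sum_i f(u_{2i+1},u_{2i+2})$ into independent blocks then finishes the count immediately.

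Your approach is more elementary and entirely self-contained; it does not need the description of the exceptional loci of $\overline{\varphi}_{2n}$ and $\qoppa_{2n}$, nor the point count on $X^{2n}$. The paper's approach, on the other hand, ties the result back into the main rationality constructions of the article and explains \emph{why} $Y^{2n-2}$ has exactly as many points as $\mathbb{P}^{2n-2}$ in terms of the fibre structure over it. Your dismissal of $\mathrm{char}(k)=2$ is in line with the paper: in characteristic two $\overline{A}$ and $\overline{B}$ coincide, so the scheme from Lemma \ref{Irr} is no longer a codimension-two complete intersection, and the paper's proof (which relies on $\overline{\varphi}_{2n}$ being birational) breaks down there as well.
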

\begin{proof}
We will follow the notation of \ref{du_map}. First, consider the intersection $W^{2n-3} = Y^{2n-2}\cap H^{2n-1}$. Assume that $n\geq 3$. Then the projection $\pi_p$ from the point $p= [0:\dots:0:1]\in H^{2n-1}$ maps $W^{2n-3}$ to the cubic hypersurface 
$$
\overline{W}^{2n-3} = \left\lbrace\sum_{i=1}^{n-1}v_{2i-1}^2v_{2i}+3v_{2i}^3 = 0\right\rbrace\subset\mathbb{P}^{2n-2}_{(v_0,\dots,v_{2n-2})}.
$$
Note that $p\notin W^{2n-3}$. Fix a point $q = [q_0:\dots:q_{2n-2}]\in\overline{W}^{2n-3}$ and let $q_i$ be the first non zero homogeneous coordinate of $q$. Then the fiber of $\pi_{p|W^{2n-3}}:W^{2n-3}\rightarrow \overline{W}^{2n-3}$ over $q$ is defined, in the line $\left\langle p,q\right\rangle$ by $\{p_i^3u_{2n-2-i}^3+u_{2n-1}^3 = 0\}$. Hence, such fiber consists of two conjugate points and a point defined over the base field, and Remark \ref{AO} yields that 
\stepcounter{thm}
\begin{equation}\label{eqW}
\sharp W^{2n-3}(k) = \sharp\overline{W}^{2n-3}(k) =  \frac{q^{2n-2}-1}{q-1}.
\end{equation}
Since $D^{2n-1}_{+},D^{2n-1}_{-}$ are not defined over $k$ all their points must lies in $Y^{2n-2}$, and since $H^{n}_{+},H^{n}_{-}$ are also not defined over $k$ their union $H^{n}_{+}\cup H^{n}_{-}$ does not have points. Moreover, the maps $\overline{\varphi}_{2n},\qoppa_{2n}$ define a biregular correspondence outside of these loci. Therefore,
$$
\sharp \mathbb{P}^{2n}(k) -\sharp Y^{2n-2}(k) -\sharp H^{2n-1}(k) + \sharp W^{2n-3}(k) = \sharp X^{2n}(k)-\sharp X^{2n-2}(k)-(\sharp Y^{2n-2}(k)-\sharp W^{2n-3}(k))\cdot \sharp \mathbb{P}^{1}(k)
$$
where we took into account that the fibers of $E^{2n-1}$ over $Y^{2n-2}\setminus Z^{n-1}$ are lines, and that the singular locus $Z^{n-1}$ of $Y^{2n-2}$ in Lemma \ref{lsing} does not have points. 

Now, Remark \ref{AO} yields that $\sharp X^{2n}(k) = \sharp \mathbb{P}^{2n}(k)$ and $\sharp X^{2n-2}(k) = \sharp \mathbb{P}^{2n-2}(k)$. Therefore, taking into account (\ref{eqW}) we get that 
$$
\sharp Y^{2n-2}(k)\left(\frac{q^2-1}{q-1}-1 \right) = \frac{q^{2n}-q^{2n-1}-q^{2n-2}+1}{q-1}+(q+1)\frac{q^{2n-2}-1}{q-1}
$$
and hence 
$$
\sharp Y^{2n-2}(k)q = \frac{q(q^{2n-1}-1)}{q-1}
$$
concluding the proof in the case $n\geq 3$. When $n = 2$ the intersection $\overline{W}^{2n-3}$ has four irreducible components: three of these are geometrically union of two skew lines and therefore with no points, the fourth one is the cubic $\{u_0^3+u_1^3+u_2^3 = u_3 = u_4 = 0\}$ which by Remark \ref{AO} has $\sharp\mathbb{P}^1(k)$ points. Hence (\ref{eqW}) holds also for the case $n = 2$ and the proof goes through as for the case $n\geq 3$.
\end{proof}

\begin{Remark}
In \cite{Hoo91} C. Hooley, generalizing a result of P. Deligne for smooth complete intersections \cite{Del74}, proved that the number of points of an $n$-dimensional complete intersection over $\mathbb{F}_q$ whose singular locus has dimension $d$ is given by 
$$
\frac{q^{n+1}-1}{q-1}+ O(q^{\frac{n+d+1}{2}}).
$$
Proposition \ref{FF_Y} provides a class of $(2n-2)$-dimensional singular varieties having the same number of points of $\mathbb{P}^{2n-2}$.
\end{Remark}

\begin{Corollary}\label{K3FF}
Let $k = \mathbb{F}_q$. Assume that $q\equiv 2\mod3$, $n\geq 2$ and consider the $K3$ surface $K_0\subset\mathbb{P}^8$ in Section \ref{sec_deg} over $k$. Then 
$$
\sharp K_0(k) = \frac{q^3-1}{q-1}-\frac{q^2-1}{q-1}+1.
$$
\end{Corollary}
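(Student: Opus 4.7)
The plan is to use the birational projection $\pi_0:K_0\dasharrow Y^2=S'_0$ from the $3$-plane $H_0\subset\mathbb{P}^8$ exhibited in Section \ref{sec_deg}, together with the point count $\sharp Y^2(k)=(q^3-1)/(q-1)$ supplied by Proposition \ref{FF_Y} applied with $n=2$, to compute $\sharp K_0(k)$. The strategy is to stratify both sides of the correspondence by the exceptional loci of $\pi_0$ and of its inverse $\rho_0|_{Y^2}$, and to observe that, under the hypothesis $q\equiv 2\pmod 3$, the Galois structure of each exceptional stratum kills almost all of its contribution to the $k$-point count.

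First I would record the $k$-rationality of the relevant loci. The indeterminacy locus $H_0\cap K_0$ of $\pi_0$ consists of the five points listed in (\ref{ptsK3}); only $p_0=[1:-1:0:1:0:\dots:0]$ is defined over $k$, because the remaining four depend on a root of $a^2-a+1$, which by Lemma \ref{modeq} together with the direct check that $a^2+a+1$ is irreducible over $\mathbb{F}_{2^m}$ for odd $m$ has no solution in $\mathbb{F}_q$ when $q\equiv 2\pmod 3$; hence they form two Galois-conjugate pairs with no $k$-point. By the same Galois argument used in the proof of Proposition \ref{FF_Y}, the indeterminacy locus $\mathcal Z_0=\Sing(Y^2)$ of $\rho_0|_{Y^2}$, which is a pair of skew Galois-conjugate lines in $\mathbb{P}^4$, satisfies $\mathcal Z_0(k)=\emptyset$. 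Section \ref{sec_deg} further exhibits the $k$-rational line $\ell_0=\{u_0+u_2=u_1+u_3=u_4=0\}\subset Y^2$ which $\rho_0|_{Y^2}$ contracts to $p_0$, while over $\overline k$ there is one further contracted curve above each of the four conjugate indeterminacy points; these fall into two Galois-conjugate pairs carrying no $k$-point.

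Granting that $\rho_0|_{Y^2}$ extends to a morphism $Y^2\setminus\mathcal Z_0\to K_0$ whose only positive-dimensional fibres are $\ell_0$ above $p_0$ and its four conjugate counterparts above the remaining indeterminacy points, and which is injective on the complement, the fibre count over $K_0(k)$ yields
\[
\sharp Y^2(k)-\sharp\mathcal Z_0(k)\;=\;\sharp\ell_0(k)-1+\sharp K_0(k),
\]
since the only $k$-rational indeterminacy point $p_0$ has fibre $\ell_0\setminus\mathcal Z_0$ with $\sharp(\ell_0\cap\mathcal Z_0)(k)=0$, while each of the other $k$-points of $K_0$ contributes a single preimage. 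Substituting $\sharp Y^2(k)=(q^3-1)/(q-1)$, $\sharp\mathcal Z_0(k)=0$ and $\sharp\ell_0(k)=(q^2-1)/(q-1)$ gives
\[
\sharp K_0(k)\;=\;\frac{q^3-1}{q-1}-\frac{q^2-1}{q-1}+1,
\]
as claimed.

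The main obstacle is verifying that no further curve is contracted on either side of the correspondence, beyond $\ell_0$ and its four conjugate counterparts. I would address this by resolving the indeterminacy of $\pi_0$ through the blow-up of $K_0$ at the five points of $H_0\cap K_0$ and analysing the resulting morphism $\widetilde{K_0}\to Y^2$ directly from the explicit equations of $K_0\subset\mathbb{P}^8$ listed in Section \ref{sec_deg}: the five exceptional divisors map to the five contracted curves, the Galois action on the four conjugate indeterminacy points pairs them as claimed, and the computation rules out any further contracted component. The same argument handles the characteristic two subcase of the hypothesis, using the direct verification that $a^2+a+1$ remains irreducible over $\mathbb{F}_{2^m}$ for $m$ odd, so that the rationality counts of $H_0\cap K_0$ and of $\mathcal Z_0$ are unchanged.
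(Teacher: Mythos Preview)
Your approach is essentially the same as the paper's: both use the birational projection $\pi_0\colon K_0\dasharrow Y^2$ from the $3$-plane $H_0$, invoke Proposition~\ref{FF_Y} for $\sharp Y^2(k)$, observe that only one of the five points of $H_0\cap K_0$ in~(\ref{ptsK3}) is $k$-rational and that $\Sing(Y^2)(k)=\emptyset$, and conclude $\sharp K_0(k)=\sharp Y^2(k)-\sharp L_p(k)+1$.

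There are two small discrepancies worth noting. First, your geometric picture of the contracted locus of $\rho_0$ differs from the paper's: you posit a contracted curve in $Y^2$ above each of the five points of $H_0\cap K_0$, whereas the paper states that the inverse contracts \emph{only} the single line $L_p$ to $p$. This does not affect the final count, since any extra contracted curve would lie over a non-$k$-rational point, but your description of the resolution is not quite right. Second, your fibre equation tacitly assumes that $\rho_0|_{Y^2\setminus\Sing(Y^2)}$ surjects onto $K_0(k)$, which fails for $k$-points of $K_0$ whose image under $\pi_0$ lands in $\Sing(Y^2)$. The paper handles this explicitly: $\pi_0$ maps a degree-four curve onto $\Sing(Y^2)$ and contracts five pairs of conjugate lines to five pairs of conjugate points on $\Sing(Y^2)$; since $\Sing(Y^2)(k)=\emptyset$ and these maps are defined over $k$, those curves on $K_0$ carry no $k$-points either. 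This is exactly the verification you flag as the main obstacle, and the paper dispatches it in one sentence rather than by resolving the indeterminacy on $\widetilde{K_0}$.
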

\begin{proof}
Recall that $Y^2\subset\mathbb{P}^4_{(u_0,\dots,u_4)}$ is the projection of $K_0\subset\mathbb{P}^8$ from the $3$-plane $H_0$. Note that by (\ref{ptsK3}) the intersection $K_0\cap H_0$ has just one point $p$ defied over $k$. The inverse of the projection is given by the quadrics containing the singular locus $\Sing(Y^2)$ of $Y^2$ which consists of two skew conjugate lines and does not have any point. Such inverse contracts just the line $L_p := \{u_0+u_2=u_1+u_3 = u_4=0\} \subset Y^2$ to the point $p$.

The projection maps a curve of degree four onto $\Sing(Y^2)$ and contracts five pairs of conjugate lines to five pairs of conjugate points on $\Sing(Y^2)$. However, since $\Sing(Y^2)$ does not have points these curves also do not have points. Hence 
$$
\sharp K_0(k) = \sharp Y^2(k) - \sharp L_p +\sharp \{p\}
$$
and to conclude the proof it is enough to apply Proposition \ref{FF_Y}.
\end{proof}

\bibliographystyle{amsalpha}
\bibliography{Biblio}

\end{document}